\newtheorem{theorem}{\bf Theorem}[section]
\newtheorem{definition}[theorem]{\bf Definition}
\newtheorem{corollary}[theorem]{\bf Corollary}
\newtheorem{example}[theorem]{\bf Example}
\newtheorem{remark}[theorem]{\bf Remark}
\newtheorem{lemma}[theorem]{\bf Lemma}
\def \R{{\mathbb R}}
\def \C{{\mathbb C}}
\def \Q{{\mathbb H}}
\def \rank{\mathrm{rank}}
\def \i{\textit{\textbf{i}}}
\def \j{\textit{\textbf{j}}}
\def \k{\textit{\textbf{k}}}
\newcommand\norm[1]{\left\lVert#1\right\rVert}
\newcommand{\Rn}[1]{%
	\textup{\lowercase\expandafter{\romannumeral#1}}%
}
\def \diag{\mathrm{diag}}
\newcommand\x{\times}
\definecolor{lime}{HTML}{A6CE39}
\definecolor{lightblue}{rgb}{0.0, 0.0, 0.5}
\DeclareRobustCommand{\orcidicon}{%
	\begin{tikzpicture}
	\draw[lime, fill=lime] (0,0)
	circle [radius=0.16]
	node[white] {{\fontfamily{qag}\selectfont \tiny ID}};
	\draw[white, fill=white] (-0.0625,0.095)
	circle [radius=0.007];
	\end{tikzpicture}
	\hspace{-2mm}
}
\xdef\csname orcid\x\endcsname{\noexpand\href{https://orcid.org/\csname orcidauthor\x\endcsname}{\noexpand\orcidicon}}
\begin{document}

\title[
Structure Preserving Algorithms for Quaternion Outer Inverses with  Applications
]{
Structure Preserving Algorithms for Quaternion Outer Inverses with  Applications
}

\author{\fnm{Neha} \sur{Bhadala$^1$\orcidC}}\email{nehabhadala@iisc.ac.in}

\author*[]{\fnm{Ratikanta} \sur{Behera$^2$\orcidA}}\email{ratikanta@iisc.ac.in}

\affil[]{\orgdiv{Department of Computational and Data Sciences}, \orgname{Indian Institute of Science}, \orgaddress{ \state{Bengaluru}, \country{India}}
}


\vspace{.3cm}





\abstract{This study investigates the theoretical and computational aspects of quaternion generalized inverses, focusing on outer inverses and $\{1,2\}$-inverses with prescribed range and/or null space constraints. In view of the non-commutative nature of quaternions, a detailed characterization of the left and right range and null spaces of quaternion matrices is presented. Explicit representations for these inverses are derived, including full rank decomposition-based formulations. We design two efficient algorithms: one leveraging the Quaternion Toolbox for MATLAB (QTFM), and the other employing a complex structure preserving approach based on the complex representation of quaternion matrices. With suitable choices of subspace constraints, these outer inverses unify and generalize several classical inverses, including the Moore–Penrose inverse, the group inverse, and the Drazin inverse. The proposed methods are validated through numerical examples and applied to two real-world tasks: quaternion-based color image deblurring, which preserves inter-channel correlations, and robust filtering of chaotic 3D signals, demonstrating their effectiveness in high-dimensional settings.
}

\keywords{Complex representation, Null space, Outer inverse, Quaternions, Range space}
\maketitle

\section{Introduction}
Quaternions, first introduced by Hamilton \cite{Hamilton2009} in 1843, have proven to be a versatile mathematical tool with applications spanning numerous fields, including quantum mechanics \cite{MR1333599, MR2672082, MR4682073}, signal processing \cite{MR3308953, MR4572388}, color image processing \cite{ChenJiaPeng2021, MR4867322, MR3979957, MR4861347}, and computer graphics \cite{Vince2021}. Quaternions form a non-commutative division algebra. In particular, quaternion matrices have gained significant attention owing to their ability to efficiently represent and process multidimensional data. Various aspects of quaternion matrices, including their algebraic properties and computational methods, were explored in \cite{MR4421890, MR3241695, MR1421264}.

Recent advances in quaternion matrix computation reflect their expanding roles in both theory and practice. For example, decomposition methods such as full rank factorization using Gauss transformations \cite{MR4509094} and structure preserving singular value decomposition algorithms \cite{MR4685298} have enhanced numerical stability in quaternion-based computations. Investigations into LU decomposition strategies \cite{MR3735827} and their error analysis with partial pivoting \cite{MR4685298} further underscore the maturation of quaternion linear algebra. These advancements highlight the growing importance of quaternion matrices in both the theoretical and applied contexts.

Among the various application domains where quaternion matrices play a crucial role, color image restoration has emerged as a particularly significant and active research area. A standard RGB image consists of three highly correlated channels (red, green, and blue), whose coupling plays a crucial role in restoration tasks. Classical monochromatic approaches process each channel independently, which often fails to capture inter-channel correlations and may lead to color distortions. Concatenation-based models partially alleviate this issue, but still lack an explicit algebraic mechanism to encode channel coupling. Quaternion algebra provides a natural and mathematically principled framework for addressing this limitation. By encoding the RGB components into the three imaginary parts of a quaternion, color images can be represented and processed holistically as quaternion matrices, thereby preserving channel correlations under a unified algebraic structure  \cite{MR4471046, MR4039278, MR4947673, MR3979957, MR3322229}. In such settings, the associated mathematical models naturally lead to quaternion linear system. Solving such systems often requires the use of generalized inverses, which extend the concept of matrix inverses to cases where standard inverses do not exist. Generalized inverses provide a robust framework for addressing linear systems and matrix equations, particularly in constrained or ill-posed scenarios. 
This, in turn, motivates the development of efficient numerical linear
algebra methods for quaternion matrices, particularly generalized inverses with prescribed range and/or null space constraints.

The concept of generalized inverses is characterized by certain algebraic conditions \cite{MR69793} that define the specific relationships between a matrix and its inverse. These conditions are:
\begin{equation*}
    (1)~ AXA = A, ~ (2)~ XAX = X, ~ (3)~ (AX)^* = AX, ~ (4)~ (XA)^* = XA.
\end{equation*}
A matrix \( X \in \Q^{n \times m} \) is called a \(\{1\}\)-inverse of \( A \in \Q^{m \times n} \) if it satisfies condition~$(1)$, i.e., \( AXA = A \). Such an inverse is denoted by \( A^{(1)}\). Similarly, matrices fulfilling condition $(2)$ are called $\{2\}$-inverses or outer inverses, denoted by $A^{(2)}$. When conditions $(1)$ and $(2)$ are satisfied, the matrix becomes a $\{1,2\}$-inverse, represented as $A^{(1,2)}$. Among these, the Moore-Penrose inverse $A^{\dagger}$ stands out as it uniquely satisfies all four conditions. Another important type of generalized inverse is the Drazin inverse. For a square matrix \( A \), the index of \( A \), denoted by \( \text{Ind}(A) = k \), is the smallest integer \( k \geq 0 \) such that \( \rank(A^{k+1}) = \rank(A^k) \). The Drazin inverse of \( A \in \Q^{m \times m}\), denoted by \( A^D \), is a unique matrix \( X \in \Q^{m \times m} \) that satisfies the following properties:
\begin{equation*}
    A^{k+1} X  = A^k, ~ X A X = X, ~ A X = X A.
\end{equation*}
In the special case where \( k = 1 \), matrix \( X \) is called the group inverse of \( A \) and is denoted by \( A^{\#} \).

Generalized inverses, particularly the outer and \(\{1,2\}\)-inverses with a predefined range and/or null space, are invaluable for solving constrained linear systems and optimization problems \cite{MR1987382}. These inverses ensure that specific subspace properties are maintained, making them essential tools for various applications. 
An important aspect of outer inverses constrained by the prescribed subspace is their ability to unify several well-known generalized inverses under a common framework. In particular, with suitable choices of subspace constraints, they encompass fundamental cases such as the Moore–Penrose inverse, group inverse, and Drazin inverse. This perspective provides deeper insights into the structural properties of these inverses and their applications in numerical analysis and applied mathematics.

The study of outer inverses for complex matrices with prescribed range and null space constraints has been extensively explored. This inverse is defined by the following conditions:  
\[
XAX = X, ~ \mathcal{R}(X) = \mathcal{R}(S), ~ \mathcal{N}(X) = \mathcal{N}(T),
\]  
and is denoted as \( X = A_{S,T}^{(2)} \). One of the earliest formulations of these inverses was introduced by Urquhart \cite{MR227186}, which laid the foundation for subsequent developments in this area. These representations were further refined and extended in Theorem~$1.3.7$ of \cite{MR3793648} and Theorem~$13$ of \cite{MR1987382}, providing a comprehensive framework for generalized inverses with prescribed subspace constraints. In \cite{MR2368224}, the authors utilized the full rank decomposition of the matrix \( R = ST \) to derive the representation of \( A^{(2)}_{\mathcal{R}(R),\mathcal{N}(R)} \). Several iterative methods have been proposed for computing outer inverses with prescribed range and null space constraints \cite{MR2451534, MR3162349, MR1645022, MR1937251}.  

While the theory of generalized inverses for complex matrices is well-developed, its extension to quaternion matrices remains largely unexplored. To the best of our knowledge, there is limited literature on the generalized inverses of quaternion matrices. Notably, earlier work \cite{MR2802523, MR2775924} established determinantal representations of the Moore-Penrose inverse and outer inverses with prescribed subspace constraints over the quaternion skew field. However, several fundamental aspects remain unaddressed, particularly in terms of computational techniques. Additionally, because of the non-commutative nature of quaternions, both the left and right generalized inverses must be investigated separately. This gives rise to several open questions, some of which are listed below:
\begin{itemize}[noitemsep]
    \item[$(a)$] Can the Urquhart representation for generalized inverses be extended to quaternion matrices?  
    \item[$(b)$] Is it possible to establish full rank decomposition-based representations for these generalized inverses in a quaternion setting?  
\end{itemize}  
These unresolved issues highlight the need for further research in this field. This work aims to advance the theoretical and computational aspects of generalized inverses for quaternion matrices, with a particular focus on outer inverses and \(\{1,2\}\)-inverses with a prescribed range and/or null spaces. The main contributions of this study are summarized as follows:  
\begin{enumerate}[noitemsep]
   \item A few characterizations and representations of the left and right range and null spaces of the quaternion matrices are discussed. 
    \item Both Urquhart-type and full rank decomposition-based representations are established for the outer inverses and $\{1,2\}$-inverses of quaternion matrices with prescribed range and/or null spaces.
    \item Two efficient algorithms are proposed for computing these generalized inverses: a direct algorithm utilizing the Quaternion Toolbox for MATLAB (QTFM), and a complex structure preserving algorithm based on the complex representation of quaternion matrices.
    \item A unified theoretical framework is established, demonstrating that outer inverses with prescribed subspace constraints naturally encompass several classical generalized inverses such as the Moore–Penrose inverse, group inverse, and Drazin inverse as particular instances.
   \item The proposed algorithms are validated using numerical examples, and their practical relevance is demonstrated via two applications: a color image deblurring task and a three-dimensional signal filtering problem.   
   \end{enumerate}  
A key advantage of the proposed representations is their flexibility: an algorithm designed for one type of generalized inverse can be readily adapted to compute the others. This adaptability enables the selection of the most appropriate inverse depending on the needs of the application. Collectively, these contributions enhance both the theoretical foundation and practical computation of generalized inverses in quaternion algebra, with demonstrated relevance to real-world problems such as image and signal processing.

The remainder of this paper is organized as follows. Section~\ref{sec2} introduces the notation and preliminary results required for subsequent discussion. In Section~\ref{sec3}, we explore the outer inverses and \(\{1,2\}\)-inverses of quaternion matrices with prescribed subspace constraints and derive explicit representations.  Section~\ref{sec4} presents efficient algorithms for computing these generalized inverses, supported by numerical examples. In Section~\ref{sec7}, we present two application scenarios to illustrate the practical utility of the proposed quaternion-based framework.

\section{Notation and preliminaries}\label{sec2}
\subsection{Notation}
The notation used in this paper is defined as follows: the sets $\Q^{m \times n}$, $\C^{m \times n}$, and $\R^{m \times n}$ refer to collections of quaternion, complex, and real matrices of size $m \times n$, respectively. In addition, we use $\Q^{m \times n}_{\nu}$ to represent the set of $m \times n$ quaternion matrices with rank $\nu$. The Frobenius norm of a matrix $A$ is denoted by $\| A \|_F$. For a matrix $A \in \Q^{m \times n}$, the notation \( A^T \) represents its transpose, while \( \bar{A} \) denotes its quaternionic conjugate. The conjugate transpose is denoted as \( A^* \). 
If \( A \) is invertible, its inverse is expressed as \( A^{-1} \). Furthermore, in MATLAB, function \text{rand}(m,n) generates an \( m \times n \) matrix with elements sampled from a uniform distribution. For clarity, these notations are used consistently throughout this study.

\section{Notation and preliminaries}\label{sec2}
\subsection{Notation}
The notation used in this article is defined as follows: sets $\Q^{m \times n}$, $\C^{m \times n}$, and $\R^{m \times n}$ refer to collections of quaternion, complex, and real matrices of size $m \times n$, respectively. In addition, we use $\Q^{m \times n}_{\nu}$ to represent the set of $m \times n$ quaternion matrices with rank $\nu$. The Frobenius norm of a matrix $A$ is denoted by $\| A \|_F$. For a matrix $A \in \Q^{m \times n}$, the notation \( A^T \) represents its transpose. The conjugate transpose is denoted as \( A^* \). 
If \( A \) is invertible, its inverse is expressed as \( A^{-1} \). Furthermore, in MATLAB, function \text{rand}(m,n) generates an \( m \times n \) matrix with elements sampled from a uniform distribution. 

\subsection{Preliminaries}
A quaternion is defined as 
$\zeta = \zeta_0 + \zeta_1\i + \zeta_2\j + \zeta_3\k$,
where $\zeta_i \in \R$ for $i = 0,1,2,3$. 
In addition, the quaternion multiplication rules are given by the following defining relations:
$\i^2 = \j^2 = \k^2 = -1, ~  \i\j = -\j\i = \k, ~ \j\k = -\k\j = \i, ~ \k\i = -\i\k = \j$.
Following the above multiplication rules, $\zeta$  can be rewritten as 
$\zeta = (\zeta_0 + \zeta_1\i) + (\zeta_2 + \zeta_3\i)\j = \gamma_1 + \gamma_2\j$,
where $\gamma_i \in \C$ for $i = 1,2$.      
Given a quaternion $\zeta$, its real component is given by \( \Re(\zeta) = \zeta_0 \), and its imaginary part is expressed as \( \Im(\zeta) = \zeta_1 \i + \zeta_2 \j + \zeta_3 \k \). The conjugate of \( \zeta \) is defined as $\bar{\zeta} = \zeta_0 - \zeta_1 \i - \zeta_2 \j - \zeta_3 \k$, and its norm is given by $|\zeta| = \sqrt{\zeta_0^2 + \zeta_1^2 + \zeta_2^2 + \zeta_3^2}$. For a quaternion matrix $Q = (q_{ij}) \in \Q^{m \times n}$, the Frobenius norm is defined as \cite{MR4685298}: 
\begin{equation}\label{eq2.1}
    \|Q\|_F = \sqrt{\sum_{i=1}^{m} \sum_{j=1}^{n} |q_{ij}|^2}.
\end{equation}
A square matrix \( P \in \Q^{n \times n} \) is termed invertible (or nonsingular) if there exists another matrix \( Q \in \Q^{n \times n} \) that satisfies condition $PQ = QP = I.$ 

Next, we recall the left and right linear independence of quaternion vectors over $\Q$.

\begin{definition}\label{def2.1}{\rm\cite{wei2018quaternion}}
    A set of vectors $u_1, u_2, \dots, u_r \in \Q^n$ is said to be left linearly independent if the equation
    $ \alpha_1 u_1 + \alpha_2 u_2 + \dots + \alpha_r u_r = 0$ with scalars $\alpha_1, \alpha_2, \dots, \alpha_r \in \Q$,
    holds only when $\alpha_1 = \alpha_2 = \dots = \alpha_r = 0$. Otherwise, if there exists at least one nonzero scalar \( \alpha_i \) such that the above equation holds, the vectors are called left linearly dependent.

    Similarly, the vectors $u_1, u_2, \dots, u_r \in \Q^n$ are said to be right linearly independent if the equation
    $ u_1 \alpha_1 + u_2 \alpha_2 + \dots + u_r \alpha_r = 0$ with scalars $\alpha_1, \alpha_2, \dots, \alpha_r \in \Q$,
    holds only when $\alpha_1 = \alpha_2 = \dots = \alpha_r = 0$. Otherwise, if there exists a nonzero scalar \( \alpha_i \) such that the above equation holds, the vectors are called right linearly dependent.
\end{definition}
The notions of the right- and left-range and null spaces are defined as follows. 
\begin{definition}\label{def2.2}{\rm\cite{MR2775924}}
Let \( A \in \Q^{m \times n} \). The left range and null spaces of \( A \), denoted by \( \mathcal{R}_l(A) \) and \( \mathcal{N}_l(A) \), are defined as
\begin{equation*}
\mathcal{R}_l(A) = \{ y \in \Q^{1 \times n} \mid y = xA, \; x \in \Q^{1 \times m} \}, ~
\mathcal{N}_l(A) = \{ x \in \Q^{1 \times m} \mid xA = 0 \}.
\end{equation*}
Similarly, the right range and null spaces, denoted by \( \mathcal{R}_r(A) \) and \( \mathcal{N}_r(A) \), are defined as
\begin{equation*}
\mathcal{R}_r(A) = \{ y \in \Q^{m \times 1} \mid y = Ax, \; x \in \Q^{n \times 1} \}, ~
\mathcal{N}_r(A) = \{ x \in \Q^{n \times 1} \mid Ax = 0 \}.
\end{equation*}
\end{definition}

\begin{remark}\label{rem2.3}
    For $A \in \Q^{m \times n}$, the following key properties hold:
    \begin{itemize}[noitemsep]
        \item[$(a)$] $\Q^{1 \times n}$ forms a left quaternion vector space under addition and left multiplication by scalars.
        \item[$(b)$] $\Q^{n \times 1}$ constitutes a right quaternion vector space under addition and right multiplication by scalars.
         \item[$(c)$] Both $\mathcal{R}_l(A)$ and $\mathcal{N}_l(A)$ are finite-dimensional subspaces within $\Q^{1 \times n}$ and $\Q^{1 \times m}$, respectively.
        \item[$(d)$] Similarly, $\mathcal{R}_r(A)$ and $\mathcal{N}_r(A)$ are finite-dimensional subspaces in $\Q^{m \times 1}$ and $\Q^{n \times 1}$, respectively.
    \end{itemize}
\end{remark}

\begin{remark}\label{rem2.4} {\rm\cite{wei2018quaternion}}
    The rank of a quaternion matrix $A$, denoted by $\rank(A)$, is defined as the maximal count of either the right linearly independent column vectors of $A$, or the left linearly independent row vectors of $A$. This equivalence is formalized as
$\rank(A) = \dim(\mathcal{R}_l(A)) = \dim(\mathcal{R}_r(A))$.
    \end{remark}

Inner products play a fundamental role in the quaternion vector spaces. Unlike real or complex vector spaces, the definition of an inner product in a quaternion space depends on whether the space is structured as a left or right vector space. We formally define the inner products in the quaternion vector space as follows.

\begin{definition}\label{def2.5}{\rm\cite{MR2775924}}
Let $\Q^{1 \times n}$ and $\Q^{n \times 1}$ denote the left and right quaternion vector spaces, respectively. The inner products in these spaces are defined as follows:
\begin{itemize}[noitemsep]
    \item[$(a)$] For the left vector space $\Q^{1\times n}$, given row vectors $x$ and $y$, their inner product is
     \begin{equation}\label{eq2.2}
   \langle x,y \rangle_l = xy^*  = \sum_{i=1}^n x_i \bar{y}_i.
   \end{equation}
    \item[$(b)$] For the right vector space $\Q^{n\times 1}$, given column vectors $x$ and $y$, their inner product is
   \begin{equation}\label{eq2.3}
    \langle x,y \rangle_r = y^*x  = \sum_{j=1}^n \bar{y}_j x_j.
    \end{equation}
\end{itemize}
\end{definition}

We now recall the concepts of the left- and right-orthogonal complements.

\begin{definition}\label{def2.6}{\rm\cite{MR2775924}}
Consider a subset $\mathcal{V}$ of the left quaternion vector space $\Q^{1\times n}$ equipped with the inner product defined in \eqref{eq2.2}. Its left orthogonal complement $\mathcal{V}_l^\perp$ is given by
    \begin{equation*}
        \mathcal{V}_l^{\perp} = \left\{ x \in \Q^{1 \times n} \mid \langle x, y \rangle_l = 0 \text{ for all } y \in \mathcal{V} \right\}.
    \end{equation*} 
    Similarly, for a subset $\mathcal{U}$ of the right quaternion vector space $\Q^{n\times 1}$ with the inner product defined in \eqref{eq2.3}, its right orthogonal complement $\mathcal{U}_r^\perp$ is given by
    \begin{equation*}
        \mathcal{U}_r^{\perp} = \left\{ x \in \Q^{n \times 1} \mid \langle x, y \rangle_r = 0 \text{ for all } y \in \mathcal{U} \right\}.
    \end{equation*}
\end{definition}
Next, we discuss the concept of complex representation of a quaternion matrix. For any $Q = Q_1 + Q_2 \j \in \Q^{m \times n}$, where $Q_1, Q_2 \in \C^{m \times n}$, its complex representation is defined as
\begin{equation}\label{eq2.4}
    Q^C := \begin{bmatrix} Q_1 & Q_2 \\ -\bar{Q}_2 & \bar{Q}_1 \end{bmatrix}.
\end{equation}
Let $Q^C_r$ denote the first block row of the matrix $Q^C$, given by $Q^C_r = \begin{bmatrix}
    Q_1 & Q_2
\end{bmatrix}$.
The following lemma presents some fundamental properties of the complex representation matrix that will be useful in subsequent results.
\begin{lemma}{\rm \cite{MR4509094}}\label{pro2.11}
    Let $\alpha \in \R$, $P, Q \in \Q^{m \times n}$, and $R \in \Q^{n \times p}$. Then
    \begin{enumerate}[noitemsep]
        \item[$(a)$] $(\alpha P)^C = \alpha P^C$, $(P + Q)^C = P^C + Q^C$, and $(PR)^C = P^C R^C$.
        \item[$(b)$] $(\alpha P)^C_r = \alpha P^C_r$, $(P + Q)_r^C = P^C_r + Q^C_r$, and $(PR)^C_r = P^C_r R^C$.
        \item[$(c)$] $\left(P^*\right)^C = \left(P^C\right)^*$ and $(P^{-1})^C = (P^C)^{-1}$, if $P^{-1}$ exists.
        \end{enumerate}
\end{lemma}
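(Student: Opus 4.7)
The plan is to fix the quaternion decompositions $P = P_1 + P_2\j$, $Q = Q_1 + Q_2\j$, $R = R_1 + R_2\j$, with $P_i, Q_i \in \C^{m\times n}$ and $R_i \in \C^{n\times p}$, and then verify each assertion by direct block-matrix computation from the definition \eqref{eq2.4}. The single non-trivial algebraic identity I will lean on throughout is the commutation rule $\j z = \bar z\, \j$ for $z \in \C$, together with $\j^2 = -1$; everything else reduces to linearity of complex conjugation and block-matrix arithmetic.

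For part (a), the statements about $\alpha P$ and $P+Q$ are immediate: since $\alpha$ is real it commutes with complex conjugation, so scaling and summing the blocks of $P^C$ matches the blocks of $(\alpha P)^C$ and $(P+Q)^C$ respectively. The substantive step is the product rule. First I would expand
\[
PR = (P_1 + P_2\j)(R_1 + R_2\j) = P_1R_1 + P_1R_2\j + P_2\j R_1 + P_2\j R_2\j,
\]
use $\j R_1 = \bar R_1\,\j$ and $\j R_2\j = -\bar R_2$, to obtain the canonical decomposition $PR = (P_1 R_1 - P_2 \bar R_2) + (P_1 R_2 + P_2 \bar R_1)\j$. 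Plugging this into \eqref{eq2.4} gives the four blocks of $(PR)^C$. I would then compute the $2\times 2$ block product $P^C R^C$ directly and compare block-by-block; the off-diagonal blocks match after noting $\overline{P_1 R_2 + P_2\bar R_1} = \bar P_1 \bar R_2 + \bar P_2 R_1$, and similarly for the diagonal. Part (b) is then obtained for free by reading off only the first block row of each identity in (a), since the first block row of a product $X^C R^C$ depends only on the first block row of $X^C$.

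Part (c) follows from (a) applied to $P^{-1}P = I_n$: the identity matrix has complex representation $I_{2n}$, hence $(P^{-1})^C P^C = I_{2n}$, so $P^C$ is invertible with inverse $(P^{-1})^C$. For part (d), I would compute $P^*$ in the form $A + B\j$ with $A, B \in \C^{n\times m}$. Entrywise conjugation gives $\bar P = \bar P_1 - P_2 \j$, and transposing yields $P^* = P_1^* + (-P_2^T)\j$, so that
\[
(P^*)^C = \begin{bmatrix} P_1^* & -P_2^T \\ P_2^* & P_1^T \end{bmatrix}.
\]
A direct conjugate-transpose of the block form \eqref{eq2.4} of $P^C$ gives the same matrix, establishing (d).

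The main obstacle is bookkeeping in the product identity of part (a): the noncommutativity forces the $\j$'s to be pushed past complex blocks using $\j z = \bar z\j$ in exactly the right order, and it is easy to drop a conjugation or a sign. Once that computation is executed carefully, parts (b)--(d) fall out essentially as corollaries.
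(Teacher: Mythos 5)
Your proposal is correct: the canonical decomposition $PR = (P_1R_1 - P_2\bar R_2) + (P_1R_2 + P_2\bar R_1)\j$ obtained via $\j z = \bar z\,\j$ matches $P^C R^C$ block-by-block, part (b) follows by restricting to the first block row, part (c) follows from applying (a) to $P^{-1}P = PP^{-1} = I$, and your computation $P^* = P_1^* + (-P_2^T)\j$ yields exactly $(P^C)^*$. Note that the paper itself gives no proof of this lemma --- it is quoted from the cited reference --- and your direct block-matrix verification is precisely the standard argument used there, so there is nothing to reconcile.
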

The following lemma establishes the key relationship between the rank of a quaternion matrix and the rank of its complex representation.
\begin{lemma}{\rm \cite{MR4509094}} \label{lem2.12}
    For any $P \in \Q^{m \times n}$ and $Q \in \Q^{n \times p}$, we have
\[
\rank(P) = \tfrac{1}{2}\rank(P^C),  ~ 
\rank(PQ) \leq \min\{\rank(P), \rank(Q)\}.
\]
\end{lemma}

The following lemmas use the complex representation to establish the existence
of left- and right inverses for full-rank quaternion matrices.
\begin{lemma}\label{rr2}
Let
$J_k=\begin{bmatrix}0&I_k\\ -I_k&0\end{bmatrix}$.
A matrix $C\in\C^{2m\times 2n}$ satisfies $J_m C=\bar{C}J_n$ if and only if there exists a unique $B\in\Q^{m\times n}$ such that $C=B^C$.
\end{lemma}
\begin{proof}
The result follows directly from the definition of the complex representation
of quaternion matrices.
\end{proof}

\begin{lemma}\label{rr3}
Let $A\in\mathbb{H}^{m\times n}$ with $\rank(A)=n$ and $m\ge n$.
Then there exists a matrix $C\in\C^{2n\times 2m}$ such that
$C A^C=I_{2n}$
and $J_n C=\bar{C}J_m$,
where $J_k$ is defined as in Lemma~\ref{rr2}.
\end{lemma}
\begin{proof}
By Lemma~\ref{lem2.12}, $A^C \in \C^{2m \times 2n}$ has full column rank $2n$.
Hence, there exists $C_0\in\C^{2n\times 2m}$ such that
$C_0 A^C=I_{2n}$.
Define
$C=\frac{1}{2}\left(C_0+J_n^{-1}\bar{C_0}J_m\right)$.
A direct computation using the identity
$J_m A^C=\bar{(A^C)}J_n$
shows that $C A^C=I_{2n}$.
Moreover, $C$ satisfies $J_n C=\bar{C}J_m$ by construction.
\end{proof}

\begin{lemma}\label{rr4}
Let $A\in\Q^{m\times n}$ with $\rank(A)=m$ and $n\ge m$.
Then there exists a matrix $D\in\C^{2n\times 2m}$ such that
$A^C D=I_{2m}$ and $J_n D=\bar{D}J_m$,
where $J_k$ is defined as in Lemma~\ref{rr2}.
\end{lemma}
\begin{proof}
By Lemma~\ref{lem2.12}, $A^C \in\C^{2m\times 2n}$ has full row rank $2m$.
Hence, there exists $D_0 \in \C^{2n\times 2m}$ such that $A^C D_0=I_{2m}$. Define
$D=\frac{1}{2}\left(D_0+J_n^{-1}\bar{D_0}J_m\right)$.
Using the identity $J_m A^C=\bar{(A^C)}J_n$, a direct computation shows
$A^C D=I_{2m}$. Moreover, by construction, $J_n D=\bar{D}J_m$.
\end{proof}

\begin{lemma}\label{rr1}
Let $A \in \mathbb{H}^{m \times n}$ be a quaternion matrix.
\begin{enumerate}[noitemsep]
\item[$(a)$] If $A$ has full column rank, i.e., $\rank(A) = n$ (with $m \geq n$), then there exists a left inverse $B \in \mathbb{H}^{n \times m}$ such that $BA = I_n$.
\item[$(b)$] If $A$ has full row rank, i.e., $\rank(A) = m$ (with $n \geq m$), then there exists a right inverse $B \in \mathbb{H}^{n \times m}$ such that $AB = I_m$.
\end{enumerate}
\end{lemma}
\begin{proof}
\begin{enumerate}
\item[$(a)$]  
Assume $\rank(A)=n$.
By Lemma~\ref{rr3}, there exists a matrix $C\in\C^{2n\times 2m}$ such that $C A^C=I_{2n}$
and
$J_n C=\bar{C}J_m$. By Lemma~\ref{rr2}, there exists a unique
$B\in\Q^{n\times m}$ such that $C=B^C$.
Using Lemma~\ref{pro2.11}, we obtain
$(BA)^C=B^C A^C =C A^C =I_{2n}=(I_n)^C$.
Consequently, it follows that $BA=I_n$.

\item[$(b)$] 
Assume $\rank(A)=m$.
By Lemma~\ref{rr4}, there exists a matrix
$D\in\mathbb{C}^{2n\times 2m}$ such that
$A^C D=I_{2m}$ and $J_n D=\bar{D}J_m$.
By Lemma~\ref{rr2}, there exists a unique
$B\in\Q^{n\times m}$ such that $D= B^C$.
Using Lemma~\ref{pro2.11}, we obtain
$(AB)^C=A^C B^C = A^C D=I_{2m}=(I_m)^C$.
Consequently, it follows that $AB=I_m$.
\end{enumerate}
\end{proof}


\subsection{Properties of range and null space of quaternion matrices}
Given a quaternion matrix $A \in \Q^{m \times n}$, its left- and right- range and null spaces satisfy key orthogonality properties. The following lemma formalizes these relationships, which will be instrumental in subsequent analysis.

\begin{lemma}\label{lem2.7}
For $A \in \Q^{m \times n}$, the following holds:
    \begin{enumerate}[noitemsep]
	\item[$(a)$] $\left(\mathcal{N}_l(A)_l^{\overset{\perp}{}}\right)_l^{\overset{\perp}{}}=\mathcal{N}_l(A)$, $\left(\mathcal{N}_r(A)_r^{\overset{\perp} 
             {}}\right)_r^{\overset{\perp}{}}=\mathcal{N}_r(A)$.
	\item[$(b)$]  $\left(\mathcal{R}_l(A)_l^{\overset{\perp}{}}\right)_l^{\overset{\perp}{}}=\mathcal{R}_l(A)$, $\left(\mathcal{R}_r(A)_r^{\overset{\perp} 
            {}}\right)_r^{\overset{\perp}{}}=\mathcal{R}_r(A)$.
            \item[$(c)$] $\mathcal{N}_l(A)=\mathcal{R}_l\left(A^*\right)_l^{\overset{\perp}{}}$.
	\item[$(d)$] 
    $\mathcal{N}_r(A)=\mathcal{R}_r\left(A^*\right)_r^{\overset{\perp}{}}$.
    \end{enumerate}
\end{lemma}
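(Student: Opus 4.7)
The four assertions split cleanly into two groups. Parts (a) and (b) are instances of the general ``double orthogonal complement'' identity for finite-dimensional subspaces with a positive-definite inner product, while parts (c) and (d) are direct unfoldings of the definitions in which the adjoint $A^{*}$ plays the role of transferring the identity $xA=0$ (resp.\ $Ax=0$) into an orthogonality condition.

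\textbf{Steps for (a) and (b).} The plan is to first observe that the inner products in Definition~\ref{def2.5} are positive definite: $\langle x,x\rangle_l = xx^{*}=\sum_i |x_i|^2 \ge 0$ with equality iff $x=0$, and analogously for $\langle\cdot,\cdot\rangle_r$. In particular $\langle u,u\rangle$ is always a real number, hence central in $\Q$. This centrality is what allows the usual Gram--Schmidt recursion $u_{k+1}=v_{k+1}-\langle v_{k+1},u_k\rangle_l\langle u_k,u_k\rangle_l^{-1}u_k$ (and similarly on the right) to produce an orthogonal basis in the quaternion setting. Using Remark~\ref{rem2.3}, the subspaces $\mathcal{N}_l(A),\mathcal{R}_l(A)\subseteq \Q^{1\times m}$ (resp.\ $\Q^{1\times n}$) and $\mathcal{N}_r(A),\mathcal{R}_r(A)\subseteq \Q^{n\times 1}$ (resp.\ $\Q^{m\times 1}$) are finite dimensional, so Gram--Schmidt applied to a basis of $V$ extended to the full ambient space yields the orthogonal decomposition $\Q^{1\times m}=V\oplus V^{\perp}_l$ (and the analogous right version). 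The inclusion $V\subseteq (V^{\perp})^{\perp}$ is immediate from the definition. For the reverse inclusion, given $x\in(V^{\perp})^{\perp}$, decompose $x=v+w$ with $v\in V$, $w\in V^{\perp}$; then $\langle x,w\rangle = \langle w,w\rangle$, which must vanish since $w\in V^{\perp}$ and $x\in(V^{\perp})^{\perp}$, forcing $w=0$. Applying this to $V=\mathcal{N}_l(A), \mathcal{R}_l(A), \mathcal{N}_r(A), \mathcal{R}_r(A)$ yields (a) and (b).

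\textbf{Steps for (c) and (d).} These follow by a short computation. For (c), $x\in \mathcal{R}_l(A^{*})^{\perp}_l$ means that $\langle x, zA^{*}\rangle_l = x(zA^{*})^{*} = (xA)z^{*}=0$ for every $z\in\Q^{1\times m}$. Taking $z$ to be the standard coordinate vectors in $\Q^{1\times m}$ shows that every entry of $xA$ vanishes, i.e.\ $xA=0$, so $x\in\mathcal{N}_l(A)$. Conversely, if $xA=0$ then trivially $(xA)z^{*}=0$ for all $z$. Part (d) is the mirror computation in the right space: $x\in \mathcal{R}_r(A^{*})^{\perp}_r$ iff $(A^{*}z)^{*}x = z^{*}(Ax)=0$ for all $z\in\Q^{m\times 1}$, which is equivalent to $Ax=0$.

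\textbf{Main obstacle.} The only nontrivial point is ensuring that the standard orthogonal-decomposition machinery (needed for (a) and (b)) carries over to the non-commutative quaternion setting. This is where one has to check that Gram--Schmidt works; the key observation that unlocks this is that $\langle u,u\rangle$ lies in $\R$ and therefore commutes with all quaternion scalars, so the classical formulas proceed without any modification. Once this is in place, parts (a), (b) reduce to standard dimension bookkeeping and parts (c), (d) are one-line definition chases via the adjoint.
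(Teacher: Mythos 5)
Your proof is correct, and for parts (c) and (d) it is essentially the paper's own argument: a definition chase through the adjoint, showing $\langle x, zA^*\rangle_l=(xA)z^*$ and then letting $z$ range over all vectors (the paper concludes $xA=0$ from "this holds for all such $z$", while you make the last step explicit via coordinate vectors; note a small dimensional typo on your side — $z$ ranges over $\Q^{1\times n}$, not $\Q^{1\times m}$, since $A^*\in\Q^{n\times m}$). Where you genuinely diverge is in parts (a) and (b): the paper dismisses these with "follow directly from Definitions \ref{def2.2} and \ref{def2.6}", whereas you supply the actual content — positive definiteness of the quaternionic inner products, the observation that $\langle u,u\rangle$ is real and hence central so Gram--Schmidt goes through over $\Q$, the resulting orthogonal decomposition $V\oplus V_l^{\perp}$ of the finite-dimensional ambient space, and the standard $x=v+w$, $\langle w,w\rangle=0$ argument for $(V^{\perp})^{\perp}\subseteq V$ (the inclusion $V\subseteq (V^{\perp})^{\perp}$ being immediate). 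This buys a self-contained justification of the double-complement identities in the non-commutative setting, which the paper leaves implicit (effectively deferring to the cited literature on quaternion inner-product spaces); the only point worth spelling out in your write-up is the conjugate-symmetry $\langle v,w\rangle_l=\overline{\langle w,v\rangle_l}$ used to pass from $w\in V^{\perp}$ to $\langle v,w\rangle=0$ in the decomposition step, which holds by a one-line computation. No gap otherwise.
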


\begin{proof}
    The results in parts $(a)$ and $(b)$ follow directly from Definitions \ref{def2.2} and \ref{def2.6}. We proceed with the proof of part $(c)$, and part $(d)$ follows analogously.

    \noindent
$(c)~$ Suppose $x \in \mathcal{N}_l(A)$, meaning that $xA = 0$. To establish $x \in \mathcal{R}_l(A^*)_l^{\perp}$, we must show that for every $y \in \mathcal{R}_l(A^*)$, the inner product satisfies $\langle x, y \rangle_l = 0$. Further, since $y \in \mathcal{R}_l(A^*)$, there exists some $z \in \Q^{1 \times n}$ such that $y = zA^*$. Applying the definition of the inner product from Definition \ref{def2.5}, we obtain
 $\langle x, y \rangle_l = \langle x, zA^* \rangle_l = x (zA^*)^* = x A z^* = 0.$
    This confirms that $x \in \mathcal{R}_l(A^*)_l^{\perp}$, and thus we conclude that $\mathcal{N}_l(A) \subseteq \mathcal{R}_l(A^*)_l^{\perp}$.

Conversely, assume $x \in \mathcal{R}_l(A^*)_l^{\perp}$. Then, for all $y \in \mathcal{R}_l(A^*)$, we have $\langle x, y \rangle_l = 0$. As $y = zA^*$ for some $z \in \Q^{1 \times n}$, it follows that $\langle x, y \rangle_l = xy^* = x(zA^*)^* = (xA)z^* = 0.$
Since this holds for all such $z$, we conclude that $xA = 0$, meaning that $x \in \mathcal{N}_l(A)$. Therefore, we have $\mathcal{R}_l(A^*)_l^{\perp} \subseteq \mathcal{N}_l(A)$, which completes the proof.
\end{proof}

The dimensions of the left and right range and null spaces of a quaternion matrix are fundamentally related. These relationships are analogous to the rank-nullity theorem in classical linear algebra. The following lemma formalizes these relationships.
\begin{lemma}\label{lem2.8}
	For $A \in \Q^{m \times n}$, the following holds:
	\begin{enumerate}[noitemsep]
            \item[$(a)$]  $\dim(\mathcal{R}_l(A))+\dim(\mathcal{N}_l(A))=m$.
		\item[$(b)$]  $\dim(\mathcal{R}_r(A))+\dim(\mathcal{N}_r(A))=n$.
	\end{enumerate}
\end{lemma}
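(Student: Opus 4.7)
The plan is to reduce each identity to the corresponding rank of $A$ together with an orthogonal complement dimension formula in a finite-dimensional quaternion inner product space. By Remark~\ref{rem2.4} one already has $\dim(\mathcal{R}_l(A)) = \dim(\mathcal{R}_r(A)) = \rank(A)$, so the ``range'' term in each identity is identified, and the task reduces to computing $\dim(\mathcal{N}_l(A))$ and $\dim(\mathcal{N}_r(A))$.

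For part $(b)$ I would invoke Lemma~\ref{lem2.7}$(d)$, which identifies $\mathcal{N}_r(A) = \mathcal{R}_r(A^*)_r^{\perp}$ as a subspace of the right quaternion vector space $\mathbb{Q}^{n\times 1}$ equipped with the inner product \eqref{eq2.3}. In this finite-dimensional right inner product space, any subspace $\mathcal{V}$ admits an orthonormal basis, obtained by a right Gram--Schmidt process adapted to Definition~\ref{def2.5}, which can then be extended to an orthonormal basis of the whole space; this yields the direct sum decomposition $\mathbb{Q}^{n\times 1} = \mathcal{V} \oplus \mathcal{V}_r^{\perp}$ and hence $\dim(\mathcal{V}) + \dim(\mathcal{V}_r^{\perp}) = n$. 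Applied to $\mathcal{V} = \mathcal{R}_r(A^*)$, and combined with $\rank(A^*) = \rank(A)$, which follows from Lemma~\ref{pro2.11}$(d)$ and Lemma~\ref{lem2.12}$(a)$ via
\[
\rank(A) = \tfrac{1}{2}\rank(A^C) = \tfrac{1}{2}\rank\bigl((A^C)^*\bigr) = \tfrac{1}{2}\rank\bigl((A^*)^C\bigr) = \rank(A^*),
\]
this gives $\dim(\mathcal{N}_r(A)) = n - \rank(A)$, whence $\dim(\mathcal{R}_r(A)) + \dim(\mathcal{N}_r(A)) = n$.

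Part $(a)$ is then established symmetrically: Lemma~\ref{lem2.7}$(c)$ identifies $\mathcal{N}_l(A)$ with $\mathcal{R}_l(A^*)_l^{\perp}$ inside the left inner product space $\mathbb{Q}^{1\times m}$ with pairing \eqref{eq2.2}, and the left analogue of the orthogonal decomposition yields $\dim(\mathcal{N}_l(A)) = m - \rank(A^*) = m - \rank(A)$. I expect the main obstacle to be justifying the orthogonal decomposition itself in the non-commutative setting: one must verify that the Gram--Schmidt procedure respects the side on which scalars act, namely that for the left inner product \eqref{eq2.2} the corrections of the form $v_i - \sum_{j<i} \langle v_i, u_j\rangle_l\, u_j$ place scalars on the correct side to preserve left orthogonality, and analogously on the right for \eqref{eq2.3}. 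Once this verification is in hand, both dimension identities follow directly from Lemma~\ref{lem2.7} as above.
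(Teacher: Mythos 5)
Your argument is correct, but it follows a genuinely different route from the paper. The paper proves part $(a)$ by a direct, purely matrix-algebraic construction: assuming (after a row permutation) that the first $r$ rows of $A$ are left linearly independent, it writes $A=\begin{bmatrix}A_1\\ BA_1\end{bmatrix}$ and shows that the rows of $X=\begin{bmatrix}-B & I_{m-r}\end{bmatrix}$ are left linearly independent, lie in $\mathcal{N}_l(A)$, and span it, giving $\dim(\mathcal{N}_l(A))=m-r$ with no appeal to inner products, orthogonal complements, or the complex representation. You instead identify $\mathcal{N}_l(A)=\mathcal{R}_l(A^*)_l^{\perp}$ and $\mathcal{N}_r(A)=\mathcal{R}_r(A^*)_r^{\perp}$ via Lemma~\ref{lem2.7} and reduce everything to two auxiliary facts not stated in the paper: the orthogonal decomposition $\mathbb{Q}^{n\times 1}=\mathcal{V}\oplus\mathcal{V}_r^{\perp}$ (and its left analogue), which you correctly propose to establish by a side-respecting Gram--Schmidt process --- your verification that the scalars sit on the correct side, and that the form is positive definite so $\langle u_j,u_j\rangle\neq 0$, is exactly the point that needs writing out --- and the identity $\rank(A^*)=\rank(A)$, which your chain through $(A^*)^C=(A^C)^*$ and $\rank(P)=\tfrac12\rank(P^C)$ proves correctly. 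What your route buys is a reusable orthogonal-projection lemma and a proof independent of any choice of independent rows; what the paper's route buys is a shorter, self-contained argument that needs only the definitions of left linear independence and rank. Note also that your method yields the slightly stronger intermediate statement $\dim(\mathcal{N}_l(A))=m-\rank(A)$ directly, which is what the paper's construction establishes as well, so the two proofs meet at the same quantitative fact.
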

\begin{proof}
    We will prove part $(a)$. The proof for part $(b)$ follows similarly.

\noindent
$(a)~$   
    Assume that the first $r$ rows of $A$ are left linearly independent, such that $\dim(\mathcal{R}_l(A)) = r$. Then, we can express $A$ as
$A = \begin{bmatrix} A_1^T & A_2^T \end{bmatrix}^T$,
    where $A_1 \in \Q^{r \times n}$ consists of $r$ left linearly independent rows, and $A_2 \in \Q^{(m-r) \times n}$ contains rows that are linear combinations of those in $A_1$. This implies that $A_2 = B A_1,$ \text{for some }$B \in \Q^{(m-r) \times r}.$
     Now, define the matrix
$X = \begin{bmatrix} -B & I_{m-r} \end{bmatrix} \in \Q^{(m-r) \times m}$.
    We then compute
$XA = \begin{bmatrix} -B & I_{m-r} \end{bmatrix} \begin{bmatrix} A_1 \\ B A_1 \end{bmatrix} = 0$.
     Let $\text{row}_i(X)$ denote the $i^{\text{th}}$ row of $X$. Then
    \begin{equation*}
        \begin{bmatrix} \text{row}_1(X)  \\ \vdots \\ \text{row}_{(m-r)}(X) \end{bmatrix} A = 0,
    \end{equation*}
     indicating that each row of $X$ is a solution to $xA = 0$. Therefore, all rows of $X$ lie in $\mathcal{N}_l(A)$. Next, we show that the rows of $X$ are left linearly independent. Suppose that there exists a vector $v \in \Q^{1 \times (m-r)}$ such that
    $vX = 0.$  Thus, we can write $v \begin{bmatrix} -B & I_{m-r} \end{bmatrix} = 0$ which implies $\begin{bmatrix} -vB & v \end{bmatrix} = 0$.  This implies $v = 0.$ Thus, the rows of $X$ are left linearly independent.  Now, we claim that any solution to $xA = 0$ can be written as a linear combination of the rows of $X$. Let $u = \begin{bmatrix} u_1 & u_2 \end{bmatrix}$, where $u_1 \in \Q^{1 \times r}$ and $u_2 \in \Q^{1 \times (m-r)}$. The equation $uA = 0$ leads to
    \begin{equation*}
        \begin{bmatrix} u_1 & u_2 \end{bmatrix} \begin{bmatrix} A_1 \\ B A_1 \end{bmatrix} = 0, \text{which implies}~ \left( u_1 + u_2 B \right) A_1 = 0.
    \end{equation*}
    Given that $A_1$ has full row rank, it admits a right inverse (see Lemma~\ref{rr1}).
Post-multiplying both sides by a right inverse of $A_1$, we obtain $u_1 + u_2 B = 0$, and hence $u_1 = -u_2 B$. Consequently, the solution vector can be represented by
$u = \begin{bmatrix} u_1 & u_2 \end{bmatrix} = \begin{bmatrix} -u_2 B & u_2 \end{bmatrix} = u_2 X.$ This establishes that any $u \in \mathcal{N}_l(A)$ belongs to row space of $X$. Because the rows of $X$ are linearly independent and span $\mathcal{N}_l(A)$, it follows that $\dim(\mathcal{N}_l(A)) = m - r.$
    Since we assumed that $\dim(\mathcal{R}_l(A)) = r$, we conclude that
$\dim(\mathcal{R}_l(A)) + \dim(\mathcal{N}_l(A)) = r + (m - r) = m$.
This completes the proof.
\end{proof}
The following lemmas establish the fundamental conditions under which the left and right range and null spaces of one quaternion matrix are contained within those of the other. These results will be used later to develop the main results of this study.
\begin{lemma}\label{lem2.9}
	Let $X \in \Q^{n \times m}$, $S \in \Q^{k \times m}$, and $T \in \Q^{n \times l}$. Then
	\begin{enumerate}[noitemsep]
		\item[$(a)$] $\mathcal{R}_l(X) \subseteq \mathcal{R}_l(S)$ if and only if there exists $U \in \Q^{n \times k}$ such that $X=US.$
		\item[$(b)$] $\mathcal{N}_l(T) \subseteq \mathcal{N}_l(X)$ if and only if there exists $V \in \Q^{l \times m}$ such that $X=TV.$
	\end{enumerate}
\end{lemma}
\begin{proof}
\begin{enumerate}
\item[$(a)$] Let $\text{row}_i(X)$ denote the $i^{\text{th}}$ row of $X$ for $i=1,2,\dots,n$, and let $\text{row}_i(S)$ denote the $i^{\text{th}}$ row of $S$ for $i=1,2,\dots,k$. If $\mathcal{R}_l(X) \subseteq \mathcal{R}_l(S)$, this implies  $\text{row}_i(X) \in \mathcal{R}_l(S), ~\text{for all} ~i=1,2,\dots,n,$  then each row of $X$ can be expressed as a linear combination of the rows of $S$. Thus, there exist scalars $u_{ij} \in \Q$ such that
$\text{row}_i(X) = \sum_{j=1}^{k} u_{ij} \text{row}_j(S)$, for all $i=1,2,\dots,n$.
    Therefore, $X$ can be written as
    \begin{equation*}
        X = \begin{bmatrix} u_{11} & u_{12} & \cdots & u_{1k} \\ u_{21} & u_{22} & \cdots & u_{2k} \\ \vdots & \vdots &  & \vdots \\ u_{n1} & u_{n2} & \cdots & u_{nk} \end{bmatrix} S = U S, ~\text{for some matrix ~$U \in \Q^{n \times k}$.}
    \end{equation*}
    
    Conversely, if $X = US$, then for any $y \in \mathcal{R}_l(X)$, there exists $x \in \Q^{1 \times n}$ such that
$y = xX = x(US) = (xU)S \in \mathcal{R}_l(S),$
    which shows that $\mathcal{R}_l(X) \subseteq \mathcal{R}_l(S)$. This completes the proof of part $(a)$.
    
 \item[$(b)$] Assume that $\mathcal{N}_l(T) \subseteq \mathcal{N}_l(X)$. Based on the fundamental properties of orthogonal complements
$\mathcal{N}_l(X)_l^{\perp} \subseteq \mathcal{N}_l(T)_l^{\perp}$.
    By Lemma \ref{lem2.7}, we have $\mathcal{N}_l(X)_l^{\perp} = \mathcal{R}_l(X^*)$ and $\mathcal{N}_l(T)_l^{\perp} = \mathcal{R}_l(T^*)$, it follows that
$ \mathcal{R}_l(X^*) \subseteq \mathcal{R}_l(T^*)$.
    By part $(a)$, there exists $U \in \Q^{m \times l}$ such that $X^* = UT^*$. Taking the conjugate transpose, we obtain $X = TU^*$. Setting $V = U^* \in \Q^{l \times m}$, we get $X = TV$. This proves one direction.
    
    Conversely, if $X = TV$, then for any $x \in \mathcal{N}_l(T)$, we have
    $xT = 0$ which implies $xX = xTV = 0,$ which implies $x \in \mathcal{N}_l(X)$. Thus, $\mathcal{N}_l(T) \subseteq \mathcal{N}_l(X)$, completing the proof.
    \end{enumerate}
\end{proof}
\begin{lemma}\label{lem2.10}
	Let $X \in \Q^{n \times m}$, $S \in \Q^{n \times k}$, and $T \in \Q^{l \times m}$. Then
	\begin{enumerate}[noitemsep]
		\item[$(a)$] $\mathcal{R}_r(X) \subseteq \mathcal{R}_r(S)$ if and only if there exists $U \in \Q^{k \times m}$ such that $X=SU.$
		\item[$(b)$] $\mathcal{N}_r(T) \subseteq \mathcal{N}_r(X)$ if and only if there exists $V \in \Q^{n \times l}$ such that $X=VT.$
	\end{enumerate}
\end{lemma}
\begin{proof}
    The proof follows in a manner similar to that of Lemma \ref{lem2.9}.
\end{proof}

\section{Generalized inverse of quaternion matrices with prescribed range and/or null space}\label{sec3}
This section studies generalized inverses of quaternion matrices with prescribed range and null space constraints. We define these generalized inverses, establish their existence conditions, and derive their explicit representations. The following definition formalizes the outer inverses with the prescribed subspace constraints.
\begin{definition}\label{def3.2}
Let $A \in \Q^{m \times n}$, $S_1 \in \Q^{n \times p}$, $T_1 \in \Q^{q \times m}$, $S_2 \in \Q^{l \times m}$, and $T_2 \in \Q^{n \times t}$. An outer inverse of matrix $A$ with specific range and/or null space constraints is a solution of the matrix equation $XAX = X$. Additionally, $X$ must meet one of the conditions given below:
\begin{itemize}[noitemsep]
      \item[$(a)$] If $\mathcal{R}_r(X) = \mathcal{R}_r(S_1)$, then $X$ is denoted as $A^{(2)}_{r, (S_1, *)}$.
      \item[$(b)$]  If $\mathcal{N}_r(X) = \mathcal{N}_r(T_1)$, then $X$ is denoted by $A^{(2)}_{r, (*, T_1)}$.
      \item[$(c)$] If $\mathcal{R}_r(X) = \mathcal{R}_r(S_1)$ and $\mathcal{N}_r(X) = \mathcal{N}_r(T_1)$, then $X$ is denoted as $A^{(2)}_{r, (S_1, T_1)}$.
      \item[$(d)$]  If $\mathcal{R}_l(X) = \mathcal{R}_l(S_2)$, then $X$ is denoted by $A^{(2)}_{l, (S_2, *)}$.
      \item[$(e)$] If $\mathcal{N}_l(X) = \mathcal{N}_l(T_2)$, then $X$ is denoted as $A^{(2)}_{l, (*, T_2)}$.
	\item[$(f)$] If $\mathcal{R}_l(X) = \mathcal{R}_l(S_2)$ and $\mathcal{N}_l(X) = \mathcal{N}_l(T_2)$, then $X$ is denoted as $A^{(2)}_{l, (S_2, T_2)}$.
	\item[$(g)$] If $X$ satisfies both the right and left range and null space conditions, that is, $\mathcal{R}_r(X) = \mathcal{R}_r(S_1)$, $\mathcal{N}_r(X) = \mathcal{N}_r(T_1)$, $\mathcal{R}_l(X) = \mathcal{R}_l(S_2)$, and $\mathcal{N}_l(X) = \mathcal{N}_l(T_2)$, then $X$ is denoted as $A^{(2)}_{(S_1, T_1), (S_2, T_2)}$.
\end{itemize}
\end{definition}
Next, we define the $\{1,2\}$-inverse that satisfies specific subspace conditions.
\begin{definition}\label{def3.3}
Let $A \in \Q^{m \times n}$, $S_1 \in \Q^{n \times p}$, $T_1 \in \Q^{q \times m}$, $S_2 \in \Q^{l \times m}$, and $T_2 \in \Q^{n \times t}$. A $\left\{1,2\right\}$-inverse of matrix $A$ with specific range and null space constraints is a solution to the matrix equations $AXA=A$ and $XAX = X$. Additionally, $X$ must satisfy one of the following conditions:
\begin{itemize}[noitemsep]
    \item[$(a)$]  If $\mathcal{R}_r(X) = \mathcal{R}_r(S_1)$ and $\mathcal{N}_r(X) = \mathcal{N}_r(T_1)$, then $X$ is denoted as $A^{(1,2)}_{r, (S_1, T_1)}$.
    \item[$(b)$] If $\mathcal{R}_l(X) = \mathcal{R}_l(S_2)$ and $\mathcal{N}_l(X) = \mathcal{N}_l(T_2)$, then $X$ is denoted as $A^{(1,2)}_{l, (S_2, T_2)}$.
    \item[$(c)$] If $X$ satisfies both right and left range and null space conditions, that is, $\mathcal{R}_r(X) = \mathcal{R}_r(S_1)$, $\mathcal{N}_r(X) = \mathcal{N}_r(T_1)$, $\mathcal{R}_l(X) = \mathcal{R}_l(S_2)$, and $\mathcal{N}_l(X) = \mathcal{N}_l(T_2)$, then $X$ is denoted as $A^{(1,2)}_{(S_1, T_1), (S_2, T_2)}$.
 \end{itemize}
 \end{definition}
Let $A\left\{1\right\}$, $A\{2\}_{r, (S_1, *)}$, $A\{2\}_{r, (*, T_1)}$, $A\{2\}_{l, (S_2, *)}$, and $A\{2\}_{l, (*, T_2)}$ be the set of all generalized inverses $A^{(1)}$, $A^{(2)}_{r, (S_1, *)}$, $A^{(2)}_{r, (*, T_1)}$, $A^{(2)}_{l, (S_2, *)}$, and $A^{(2)}_{l, (*, T_2)}$, respectively.

We now present the necessary conditions for the existence of generalized inverse of a matrix with prescribed right range and/or null space, along with the corresponding expression for this inverse.
\begin{theorem}\label{thm3.4}
	Let $A \in \Q_{\nu}^{m \times n}$, $S_1 \in \Q^{n \times p}$, $T_1 \in \Q^{q \times m}$, and $X=S_1\left(T_1AS_1\right)^{(1)}T_1$, where $\left(T_1AS_1\right)^{(1)}$ is a fixed but arbitrary element of $\left(T_1AS_1\right)\left\{1\right\}$. Then
	\begin{enumerate}[noitemsep]
            \item[$(a)$] $X \in A\{1\}$ if and only if $\rank(T_1AS_1)=\nu$.
		\item[$(b)$] $X \in A\{2\}_{r, (S_1, *)}$ if and only if $\rank\left(T_1AS_1\right)=\rank\left(S_1\right)$.
		\item[$(c)$] $X \in A\{2\}_{r, (*, T_1)}$ if and only if $\rank\left(T_1AS_1\right)=\rank\left(T_1\right)$.
		\item[$(d)$] There exists a unique $X = A^{(2)}_{r,(S_1, T_1)}$ if and only if $\rank\left(T_1AS_1\right)=\rank\left(S_1\right)=\rank\left(T_1\right)$.
		\item[$(e)$] There exists a unique $X = A^{(1,2)}_{r,(S_1, T_1)}$ if and only if $\rank\left(T_1AS_1\right)=\rank\left(S_1\right)=\rank\left(T_1\right)=\nu$.
	\end{enumerate}
\end{theorem}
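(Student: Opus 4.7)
The plan is to reduce every rank hypothesis in the theorem to an easy one-sided cancellation argument by introducing full rank factorizations of the relevant matrices and then exploiting the characterizations of range/null-space inclusion from Lemmas~\ref{lem2.9} and \ref{lem2.10}. Fix a full rank factorization $A = FG$ with $F \in \Q^{m \times r}$, $G \in \Q^{r \times n}$ (each of rank $r$), put $L := T_1 F$, $R := G S_1$, and $W := (T_1 A S_1)^{(1)} = (LR)^{(1)}$, so that $T_1 A S_1 = LR$ and $X = S_1 W T_1$. Since $F$ has a left inverse and $G$ has a right inverse, every identity of the form $AYA = A$ or $YAY = Y$ can be reduced to a statement purely about the smaller factors $L$, $R$, $W$, and analogously for the auxiliary factorizations of $S_1$ and $T_1$ needed for parts (b) and (c).

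For (a), the hypothesis $\rank(LR) = r$ forces $\rank(L) = \rank(R) = r$, so $L$ is left invertible and $R$ is right invertible; cancelling these in the rewriting $L(RWL)R = LR$ of $(LR)W(LR) = LR$ yields $RWL = I_r$, whence $AXA = F(RWL)G = FG = A$. The converse follows by writing $A = (AS_1 W)(T_1 A)$ to obtain $\rank(T_1 A) \geq r$, and then $T_1 A = (T_1 A S_1)(W T_1 A)$ to obtain $r \leq \rank(T_1 A S_1) \leq r$. For (b), take a second full rank factorization $S_1 = F_1 G_1$ with $s := \rank(S_1)$; the hypothesis $\rank(T_1 A S_1) = s$ forces $N := T_1 A F_1$ to have full column rank $s$, and the same cancellation trick applied to $N(G_1 W N) G_1 = N G_1$ yields $G_1 W N = I_s$. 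From this identity, $XAX = X$ and $X A F_1 = F_1$ follow by direct substitution; the second equation together with the obvious inclusion $\mc{R}_r(X) \subseteq \mc{R}_r(S_1)$ and Lemma~\ref{lem2.10} delivers $\mc{R}_r(X) = \mc{R}_r(S_1)$. For the converse, substituting $X = XAX$ into itself gives $X = S_1 W (T_1 A S_1) W T_1$, so $\rank(X) \leq \rank(T_1 A S_1) \leq \rank(S_1) = \rank(X)$, and equality propagates.

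Part (c) is the exact dual: factor $T_1 = F_2 G_2$ with $t := \rank(T_1)$, use $\rank(T_1 A S_1) = t$ to deduce that $M := G_2 A S_1$ has full row rank, derive $M W F_2 = I_t$, and extract $\mc{N}_r(X) = \mc{N}_r(T_1)$ from the identity $G_2 A X = G_2$ together with $\mc{N}_r(T_1) = \mc{N}_r(G_2)$ (which holds because $F_2$ is left invertible). Parts (d) and (e) are then immediate: (d) imposes the rank hypotheses of (b) and (c) simultaneously, so that $X$ acquires both the prescribed range and null space, while (e) further adds the rank-$r$ hypothesis of (a), so that $AXA = A$ holds alongside $XAX = X$ and the subspace constraints. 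The main obstacle I anticipate is disciplinary rather than algebraic: because a quaternionic $\{1\}$-inverse $W$ is highly non-unique, every cancellation must be justified purely by one-sided invertibility of a specific factor (through Lemmas~\ref{lem2.9} and \ref{lem2.10}), never by any explicit formula for $W$. Once this discipline is maintained, the non-commutativity of $\Q$ causes no further difficulty.
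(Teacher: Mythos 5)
Your proposal is correct, and all five parts go through as you describe; the converse directions (rank chases from $X=XAX$ or $A=AXA$, plus Lemma~\ref{lem2.8} to convert null-space equality into rank equality) are essentially identical to the paper's. Where you genuinely diverge is in the forward directions: the paper never factors $A$, $S_1$, or $T_1$, but instead runs dimension-counting arguments on subspaces --- e.g.\ for (b) it upgrades $\mathcal{N}_r(S_1)\subseteq\mathcal{N}_r(T_1AS_1)$ to equality via equal dimensions, invokes the Lemma~\ref{lem2.10}-type characterization to produce a matrix $Z$ with $S_1=ZT_1AS_1$, and then derives the key identity $S_1=S_1\left(T_1AS_1\right)^{(1)}T_1AS_1$ by inserting $\left(T_1AS_1\right)^{(1)}$; parts (a) and (c) are handled the same way with auxiliary matrices $Y,Z$ solving $A=AS_1Y$, $AS_1=ZT_1AS_1$, $T_1=T_1AS_1Z$. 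Your route replaces this with full rank factorizations $A=FG$, $S_1=F_1G_1$, $T_1=F_2G_2$ and one-sided cancellation, yielding the sharper identities $RWL=I_r$, $G_1WN=I_s$, $MWF_2=I_t$, from which the paper's identities (and then $XAX=X$, $XAS_1=S_1$, $T_1AX=T_1$, and the subspace equalities) drop out by substitution. What your approach buys is a uniform, very mechanical cancellation discipline and explicit ``inverse-like'' identities; what it costs is reliance on the existence of quaternion full rank decompositions and on one-sided inverses of full column/row rank quaternion matrices --- both of which the paper does use elsewhere (Theorems~\ref{thm3.8}--\ref{thm3.11} and the cited factorization algorithms), so the ingredients are available, but the paper's own proof of Theorem~\ref{thm3.4} is self-contained given only Lemmas~\ref{lem2.8}--\ref{lem2.12}. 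One cosmetic remark: you only sketch the converse of (c) by duality; spelling out that $\mathcal{N}_r(X)=\mathcal{N}_r(T_1)$ forces $\rank(X)=\rank(T_1)$ (both matrices having $m$ columns, via Lemma~\ref{lem2.8}) would complete it exactly as in the paper.
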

\begin{proof}
\begin{enumerate}
\item[$(a)$]Assume that $\rank\left(T_1AS_1\right) = \nu$. From Lemma \ref{lem2.12}, we obtain
$\nu=\rank\left(T_1AS_1\right) \leq \rank\left(AS_1\right) \leq \rank\left(A\right)=\nu$.
	This implies that $\rank\left(AS_1\right)=\rank\left(A\right)$. Additionally, from Definition \ref{def2.2}, we obtain $\mathcal{R}_r\left(AS_1\right) \subseteq \mathcal{R}_r\left(A\right).$
    Since both subspaces have the same dimensions, consequently 
    $\mathcal{R}_r\left(AS_1\right) = \mathcal{R}_r\left(A\right).$ Hence, $\mathcal{R}_r\left(A\right) \subseteq \mathcal{R}_r\left(AS_1\right)$. Thus, there exists a matrix $Y \in \Q^{p \times n}$ such that $A=AS_1Y.$
 Now consider the expression
	\begin{equation*}
		AXA=A\left(S_1\left(T_1AS_1\right)^{(1)}T_1\right)A =A\left(S_1\left(T_1AS_1\right)^{(1)}T_1\right)AS_1Y.
	\end{equation*}
    Rewriting, we get
    \begin{equation}\label{pf1.1}
        AXA= AS_1\left(T_1AS_1\right)^{(1)}\left(T_1AS_1\right)Y.
    \end{equation}
We have $\rank\left(T_1AS_1\right)=\rank\left(AS_1\right)=\nu$ and by Definition \ref{def2.2}, we have $\mathcal{N}_r\left(AS_1\right) \subseteq \mathcal{N}_r\left(T_1AS_1\right)$. Consequently, 
$\mathcal{N}_r\left(T_1AS_1\right) = \mathcal{N}_r\left(AS_1\right)$.
Thus, $\mathcal{N}_r\left(T_1AS_1\right) \subseteq \mathcal{N}_r\left(AS_1\right)$  implies the existence of a matrix $Z \in \Q^{m \times q}$ such that 
$
AS_1=ZT_1AS_1.
$
Using this, we can write
$AS_1=ZT_1AS_1=Z\left(T_1AS_1\right)\left(T_1AS_1\right)^{(1)}\left(T_1AS_1\right)=AS_1\left(T_1AS_1\right)^{(1)}\left(T_1AS_1\right)$.
Substituting this into equation \eqref{pf1.1}, we obtain
$AXA=AS_1\left(T_1AS_1\right)^{(1)}\left(T_1AS_1\right)Y = AS_1Y=A$.
Thus, $X \in A\{1\}$.

Conversely, assume $X \in A\{1\}$. We have
\begin{equation*}
A=AXA=AX\left(AXA\right)=A\left(S_1\left(T_1AS_1\right)^{(1)}T_1\right)A\left(S_1\left(T_1AS_1\right)^{(1)}T_1\right)A.
\end{equation*}
Using Lemma \ref{lem2.12}, it follows that
$\rank(A) \leq \rank\left(T_1AS_1\left(T_1AS_1\right)^{(1)}T_1A\right) \leq \rank\left(T_1AS_1\right)$.
Additionally, we have
$\rank\left(T_1AS_1\right) \leq \rank\left(AS_1\right) \leq \rank\left(A\right)$.
By combining these inequalities, we conclude that 
$\rank\left(T_1AS_1\right)=\rank(A)=\nu$. 
\item[$(b)$] Assume $\rank\left(T_1AS_1\right)=\rank\left(S_1
\right)$. This implies that $\dim\left(\mathcal{R}_r\left(T_1AS_1\right)\right)=\dim\left(\mathcal{R}_r\left(S_1\right)\right)$, and hence $\dim\left(\mathcal{N}_r\left(T_1AS_1\right)\right)=\dim\left(\mathcal{N}_r\left(S_1\right)\right)$. Since $\mathcal{N}_r\left(S_1\right) \subseteq \mathcal{N}_r\left(T_1AS_1\right)$, we conclude that $\mathcal{N}_r\left(S_1\right) = \mathcal{N}_r\left(T_1AS_1\right)$. Consequently, $\mathcal{N}_r\left(T_1AS_1\right) \subseteq \mathcal{N}_r\left(S_1\right)$, which implies that there exists a matrix $Z \in \Q^{n \times q}$ such that $S_1=ZT_1AS_1$. Thus,
\begin{equation}\label{pr1.2}
	S_1=ZT_1AS_1\left(T_1AS_1\right)^{(1)}T_1AS_1 = S_1\left(T_1AS_1\right)^{(1)} T_1AS_1.
\end{equation}
From this, we derive
$XAX=S_1\left(T_1AS_1\right)^{(1)}T_1AS_1\left(T_1AS_1\right)^{(1)}T_1 = S_1\left(T_1AS_1\right)^{(1)}T_1=X$.
Hence, $X \in A\{2\}$. 

As $X=S_1\left(T_1AS_1\right)^{(1)}T_1$, and using Definition \ref{def2.2}, we get
$\mathcal{R}_r(X)=\mathcal{R}_r\left(S_1\left(T_1AS_1\right)^{(1)}T_1\right) \subseteq \mathcal{R}_r\left(S_1\right)$.
From \eqref{pr1.2}, it follows that
$S_1=S_1\left(T_1AS_1\right)^{(1)}T_1AS_1 = XAS_1$.
By Definition \ref{def2.2}, we get
$\mathcal{R}_r(S_1) = \mathcal{R}_r\left(XAS_1\right) \subseteq \mathcal{R}_r(X)$.
Combining both results, we conclude $\mathcal{R}_r(X)=\mathcal{R}_r\left(S_1\right)$.

Conversely, assume $X \in A\{2\}$ and $\mathcal{R}_r(X)=\mathcal{R}_r\left(S_1\right)$. Then
\begin{equation*}
X=XAX=S_1\left(T_1AS_1\right)^{(1)}T_1AS_1\left(T_1AS_1\right)^{(1)}T_1.
\end{equation*}
By Lemma \ref{lem2.12}, we get
\begin{equation}\label{pr1.3}
	\rank\left(X\right) \leq \rank\left(T_1AS_1\left(T_1AS_1\right)^{(1)}T_1\right) \leq \rank\left(T_1AS_1\right) \leq \rank\left(S_1\right).
\end{equation}
Since $\mathcal{R}_r(X)=\mathcal{R}_r\left(S_1\right)$, we have $\dim\left(\mathcal{R}_r(X)\right)=\dim\left(\mathcal{R}_r(S_1)\right)$, implying $\rank\left(X\right)=\rank\left(S_1\right) $. Combining this with the previous inequality gives 
$\rank\left(X\right) \leq  \rank\left(T_1AS_1\right) \leq \rank\left(S_1\right) =\rank\left(X\right)$.
Therefore, $\rank\left(T_1AS_1\right)=\rank\left(S_1\right)$. 
\item[$(c)$] Assume that $\rank(T_1AS_1) = \rank(T_1)$. According to Definition \ref{def2.2}, we have 
$\mathcal{R}_r(T_1AS_1) \subseteq \mathcal{R}_r(T_1)$. 
As both subspaces have the same dimensions, consequently
$\mathcal{R}_r(T_1AS_1) = \mathcal{R}_r(T_1)$. 
Thus, $\mathcal{R}_r(T_1) \subseteq \mathcal{R}_r(T_1AS_1)$, implying the existence of a matrix $Z \in \Q^{p \times m}$ such that
$T_1 = T_1AS_1Z$. 
Consequently, we obtain
\begin{equation}\label{pr1.4}
    T_1 = (T_1AS_1)(T_1AS_1)^{(1)}T_1AS_1Z = T_1AS_1(T_1AS_1)^{(1)}T_1.
\end{equation}
Utilizing this result, we deduce
$XAX = S_1 (T_1AS_1)^{(1)} T_1AS_1 (T_1AS_1)^{(1)} T_1 
    = S_1 (T_1AS_1)^{(1)} T_1 = X$.
Hence, $X \in A\{2\}$. 

Because $X = S_1(T_1AS_1)^{(1)}T_1$, it follows that $\mathcal{N}_r(T_1) \subseteq \mathcal{N}_r(X)$. From \eqref{pr1.4}, we obtain
$T_1 = T_1AS_1 (T_1AS_1)^{(1)} T_1 = T_1AX$.
Thus, $\mathcal{N}_r(X) \subseteq \mathcal{N}_r(T_1)$. Combining both results, we conclude that
$\mathcal{N}_r(X) = \mathcal{N}_r(T_1)$. 

Conversely, assume that $X \in A\{2\}$ and $\mathcal{N}_r(X) = \mathcal{N}_r(T_1)$. Then,
\begin{equation*}
    X = XAX = S_1(T_1AS_1)^{(1)} T_1AS_1 (T_1AS_1)^{(1)} T_1.
\end{equation*}
Using Lemma \ref{lem2.12}, we obtain
\begin{equation}\label{pr1.5}
    \rank(X) \leq \rank(T_1AS_1(T_1AS_1)^{(1)}T_1) \leq \rank(T_1AS_1) \leq \rank(T_1).
\end{equation}
Since $\mathcal{N}_r(X) = \mathcal{N}_r(T_1)$, we have
$\dim(\mathcal{N}_r(X)) = \dim(\mathcal{N}_r(T_1))$. 
Applying Lemma \ref{lem2.8}, we conclude that
$\rank(X) = \rank(T_1)$.
Substituting this into \eqref{pr1.5}, we obtain
$\rank(X) \leq \rank(T_1AS_1) \leq \rank(T_1) = \rank(X)$.
Thus, we conclude that $\rank(T_1AS_1) = \rank(T_1)$. 
\item[$(d)$] By combining parts $(b)$ and $(c)$, we obtain that $X \in A\{2\}_{r,(S_1,T_1)}$ if and only if 
$\rank(T_1AS_1) = \rank(S_1) = \rank(T_1)$.
It remains to show that under this condition the solution is unique. 

Suppose that $X_1$ and $X_2$ are two solutions satisfying
\[
X_iAX_i = X_i,~ R_r(X_i)=R_r(S_1),~ N_r(X_i)=N_r(T_1), \text{~for~} i=1,2.
\]
Let $D := X_1 - X_2$. Clearly, every column of $X_i$ lies in $R_r(S_1)$, so that the columns of $D$ belong to $R_r(S_1)$, i.e. $R_r(D)\subseteq R_r(S_1)$. Moreover, since $N_r(X_1)=N_r(X_2)=N_r(T_1)$, it follows that $N_r(T_1)\subseteq N_r(D)$.  

Now, for any $s\in R_r(S_1)$, we have $s=X_i u$ for some $u$, because $R_r(X_i)=R_r(S_1)$. Using $X_iAX_i=X_i$, we obtain $(X_iA)s = X_iA(X_iu)=X_iu=s$.
 From \cite[Theorem~3.1]{MR2775924}, we know that
$AR_r(S_1) \oplus N_r(T_1) = \Q^{m\times 1}$.
Hence, every vector $y\in \Q^{m}$ can be written uniquely as $y=t+n$ with $t\in AR_r(S_1)$ and $n\in N_r(T_1)$.  

For such a decomposition, we compute
$Dy = Dt + Dn$.
Since $n\in N_r(T_1)\subseteq N_r(D)$, we have $Dn=0$. Thus $Dy=Dt$. Moreover, since $t\in AR_r(S_1)$, there exists $s\in R_r(S_1)$ such that $t=As$, and therefore
$Dy = Dt = D(As) = X_1As - X_2As$.
But as shown above, $(X_iA)s=s$ for all $s\in R_r(S_1)$, so $X_1As = s = X_2As$. Consequently, $Dt=0$. Since this holds for arbitrary $y\in \Q^m$, we conclude that $Dy=0$ identically, i.e. $D=0$. Hence $X_1=X_2$, proving uniqueness.
\item[$(e)$] The result directly follows from parts $(a)$ and $(d)$. 
\end{enumerate}
\end{proof}
Next, we present the conditions required for the existence of a generalized inverse of a matrix with a prescribed left range and/or null space. In addition, we provide the corresponding expression for this inverse.
\begin{theorem}\label{thm3.5}
	Let $A \in \Q_{\nu}^{m \times n}$, $S_2 \in \Q^{l \times m}$, $T_2 \in \Q^{n \times t}$, and $X=T_2\left(S_2AT_2\right)^{(1)}S_2$, where $\left(S_2AT_2\right)^{(1)}$ is a fixed but arbitrary element of $\left(S_2AT_2\right)\left\{1\right\}$. Then
	\begin{enumerate}[noitemsep]
            \item[$(a)$] $X \in A\{1\}$ if and only if $\rank(S_2AT_2)=\nu$.
		\item[$(b)$] $X \in A\{2\}_{l, (S_2, *)}$ if and only if $\rank\left(S_2AT_2\right)=\rank\left(S_2\right)$.
		\item[$(c)$] $X \in A\{2\}_{l, (*, T_2)}$if and only if $\rank\left(S_2AT_2\right)=\rank\left(T_2\right)$.
		\item[$(d)$] There exists a unique $X = A^{(2)}_{l,(S_2, T_2)}$ if and only if $\rank\left(S_2AT_2\right)=\rank\left(S_2\right)=\rank\left(T_2\right)$.
		\item[$(e)$] There exists a unique $X = A^{(1,2)}_{l,(S_2, T_2)}$ if and only if $\rank\left(S_2AT_2\right)=\rank\left(S_2\right)=\rank\left(T_2\right)=\nu$.
	\end{enumerate}
\end{theorem}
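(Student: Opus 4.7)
The plan is to exploit the structural duality between Theorems \ref{thm3.4} and \ref{thm3.5}: in $X = T_2(S_2AT_2)^{(1)}S_2$, the factor $S_2$ sits on the right (so its rows span the rows of $X$) and $T_2$ sits on the left (so left-annihilators of $T_2$ annihilate $X$ on the left), mirroring the positions of $T_1$ and $S_1$ in Theorem \ref{thm3.4}. Consequently, the arguments of Theorem \ref{thm3.4} should carry over by systematically replacing $\mathcal{R}_r,\mathcal{N}_r$ with $\mathcal{R}_l,\mathcal{N}_l$ and invoking Lemma \ref{lem2.9} in place of Lemma \ref{lem2.10}.

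For part (a), I would first use the rank chain $r = \rank(S_2AT_2) \le \rank(S_2A) \le \rank(A) = r$ supplied by Lemma \ref{lem2.12} to deduce $\mathcal{R}_l(A) = \mathcal{R}_l(S_2A)$, and then apply Lemma \ref{lem2.9}(a) to obtain $U$ with $A = U S_2 A$. A parallel argument on null spaces, using the trivial inclusion $\mathcal{N}_l(S_2A) \subseteq \mathcal{N}_l(S_2AT_2)$ together with equal dimensions from Lemma \ref{lem2.8}, yields $V$ with $S_2 A = S_2 A T_2 V$ via Lemma \ref{lem2.9}(b). Substituting both relations into $AXA$ and collapsing the middle expression through the $\{1\}$-inverse identity $(S_2AT_2)(S_2AT_2)^{(1)}(S_2AT_2) = S_2AT_2$ reproduces $A$. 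The converse direction chains $A = AXAXA$ through Lemma \ref{lem2.12} to sandwich $\rank(S_2AT_2)$ between $r$ and $r$.

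For part (b), from $\rank(S_2AT_2)=\rank(S_2)$ and the trivial inclusion $\mathcal{N}_l(S_2) \subseteq \mathcal{N}_l(S_2AT_2)$, a dimension count via Lemma \ref{lem2.8} forces $\mathcal{N}_l(S_2) = \mathcal{N}_l(S_2AT_2)$, and Lemma \ref{lem2.9}(b) produces $V$ with $S_2 = S_2AT_2V$; this yields the key identity $S_2 = S_2AX$, from which both $XAX = X$ and $\mathcal{R}_l(S_2) \subseteq \mathcal{R}_l(X)$ follow, while the reverse inclusion is read off directly from the factorization $X = T_2(S_2AT_2)^{(1)}S_2$. Part (c) argues symmetrically starting from $\mathcal{R}_l(S_2AT_2) \subseteq \mathcal{R}_l(T_2)$ and arriving at $T_2 = XAT_2$, which controls the null space. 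Parts (d) and (e) are then immediate combinations of the earlier cases, exactly as in Theorem \ref{thm3.4}.

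The proof presents no genuinely new obstacle beyond careful bookkeeping of left versus right spaces and the positioning of the factors in $X$; the hard part will simply be not conflating $\mathcal{R}_l$ with $\mathcal{R}_r$ at any step. A more economical alternative would be to apply Theorem \ref{thm3.4} to $A^*$ with the substitutions $S_1 := T_2^*$, $T_1 := S_2^*$, and then conjugate-transpose the entire conclusion using Lemma \ref{pro2.11}(d) together with the bijection between left and right range/null spaces induced by $(\cdot)^*$ (which follows from $(xM)^* = M^* x^*$ and Definition \ref{def2.2}). This alternative requires only the auxiliary observation that the conjugate transpose of a $\{1\}$-inverse of a quaternion matrix $B$ is a $\{1\}$-inverse of $B^*$, which is immediate from taking adjoints of the Penrose equation $BXB = B$.
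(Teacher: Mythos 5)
Your main argument is essentially identical to the paper's own proof: the paper establishes Theorem~\ref{thm3.5} by mirroring the proof of Theorem~\ref{thm3.4} in the left setting, using the same rank chain $r=\rank(S_2AT_2)\leq\rank(S_2A)\leq\rank(A)$, the same dimension-count arguments producing $A=YS_2A$, $S_2=S_2AX$ and $T_2=XAT_2$, and the same converse via $A=AX(AXA)$ together with Lemma~\ref{lem2.12}, so the proposal is correct and takes the same route. (Only a remark on your optional duality shortcut: applying Theorem~\ref{thm3.4} to $A^*\in\Q^{n\times m}$ requires $S_1:=S_2^*$ and $T_1:=T_2^*$, not the swapped assignment you wrote, since then $T_1A^*S_1=(S_2AT_2)^*$ and $X^*=S_1(T_1A^*S_1)^{(1)}T_1$; this slip does not affect your primary proof.)
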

\begin{proof}
		\begin{enumerate}
        \item[$(a)$]
    Assume that $\rank\left(S_2AT_2\right) = \nu$. Using Lemma \ref{lem2.12}, we obtain
$\nu =\rank\left(S_2AT_2\right) \leq \rank\left(S_2A\right) \leq \rank\left(A\right)= \nu$.
	Thus, $\rank\left(S_2A\right)=\rank\left(A\right)$. Additionally, from Definition \ref{def2.2}, we have 
    $ \mathcal{R}_l\left(S_2A\right) \subseteq \mathcal{R}_l\left(A\right)$.
    Because both subspaces have the same dimensions, consequently 
    $ \mathcal{R}_l\left(S_2A\right) = \mathcal{R}_l\left(A\right)$.
    Hence, $\mathcal{R}_l\left(A\right) \subseteq \mathcal{R}_l\left(S_2A\right)$, implying the existence of a matrix $Y \in \Q^{m \times l}$ such that 
     $A=YS_2A$.
     
    Now, consider the expression
	$AXA=A\left(T_2\left(S_2AT_2\right)^{(1)}S_2\right)A =YS_2A\left(T_2\left(S_2AT_2\right)^{(1)}S_2\right)A$.
        Rewriting, we get
        \begin{equation}\label{pf2.1}
            AXA =Y\left(S_2AT_2\right)\left(S_2AT_2\right)^{(1)}S_2A.
        \end{equation}
		We have $\rank\left(S_2AT_2\right)=\rank\left(S_2A\right)$ and by Definition \ref{def2.2}, we have $\mathcal{N}_l\left(S_2A\right) \subseteq \mathcal{N}_l\left(S_2AT_2\right)$. Consequently, 
        $ \mathcal{N}_l\left(S_2AT_2\right) = \mathcal{N}_l\left(S_2A\right)$.
        Thus, $\mathcal{N}_l\left(S_2AT_2\right) \subseteq \mathcal{N}_l\left(S_2A\right)$, implying the existnece of a matrix $Z \in \Q^{t \times n}$ such that $S_2A=S_2AT_2Z$.
		Using this result,
$S_2A=S_2AT_2Z=\left(S_2AT_2\right)\left(S_2AT_2\right)^{(1)}\left(S_2AT_2\right)Z=\left(S_2AT_2\right)\left(S_2AT_2\right)^{(1)}S_2A$.
		Substituting this into equation \eqref{pf2.1}, we obtain
$AXA=Y\left(S_2AT_2\right)\left(S_2AT_2\right)^{(1)}S_2A= YS_2A=A$.
		Thus, $X \in A\{1\}$.
		
		Conversely, assume $X \in A\{1\}$. We have
		\begin{equation*}
A=AXA=AX\left(AXA\right)=A\left(T_2\left(S_2AT_2\right)^{(1)}S_2\right)A\left(T_2\left(S_2AT_2\right)^{(1)}S_2\right)A.
		\end{equation*}
		Using Lemma \ref{lem2.12}, it follows that
	$\rank(A) \leq \rank\left(S_2AT_2\left(S_2AT_2\right)^{(1)}S_2A\right) \leq \rank\left(S_2AT_2\right)$.
		Additionally, we have
			$\rank\left(S_2AT_2\right) \leq \rank\left(S_2A\right) \leq \rank\left(A\right)$.
		By combining these inequalities, we conclude that 
        $\rank\left(S_2AT_2\right)=\rank(A)=\nu$. 
	\item[$(b)$] Assume that $\rank\left(S_2AT_2\right)=\rank\left(S_2\right)$. This implies that $\dim\left(\mathcal{R}_l\left(S_2AT_2\right)\right)=\dim\left(\mathcal{R}_l\left(S_2\right)\right)$, and hence $\dim\left(\mathcal{N}_l\left(S_2AT_2\right)\right)=\dim\left(\mathcal{N}_l\left(S_2\right)\right)$. Because $\mathcal{N}_l\left(S_2\right) \subseteq \mathcal{N}_l\left(S_2AT_2\right)$, we can conclude that $\mathcal{N}_l\left(S_2\right) = \mathcal{N}_l\left(S_2AT_2\right)$. Consequently, $\mathcal{N}_l\left(S_2AT_2\right) \subseteq \mathcal{N}_l\left(S_2\right)$, which implies the existence of a matrix $Z \in \Q^{t \times m}$ such that $S_2=S_2AT_2Z$. Thus,
		\begin{equation}\label{pr2.2}
			S_2=S_2AT_2\left(S_2AT_2\right)^{(1)}S_2AT_2Z = S_2AT_2\left(S_2AT_2\right)^{(1)} S_2.
		\end{equation}
		From this, we derive
$XAX=T_2\left(S_2AT_2\right)^{(1)}S_2AT_2\left(S_2AT_2\right)^{(1)}S_2 = T_2\left(S_2AT_2\right)^{(1)}S_2=X$.
		Hence, $X \in A\{2\}$. 
        
        As $X=T_2\left(S_2AT_2\right)^{(1)}S_2$, and using Definition \ref{def2.2}, we get
$\mathcal{R}_l(X)=\mathcal{R}_l\left(T_2\left(S_2AT_2\right)^{(1)}S_2\right) \subseteq \mathcal{R}_l\left(S_2\right)$.
		From \eqref{pr2.2}, it follows that
$S_2=S_2AT_2\left(S_2AT_2\right)^{(1)}S_2 = S_2AX$.
		By Definition \ref{def2.2}, we get
$\mathcal{R}_l(S_2) = \mathcal{R}_l\left(S_2AX\right) \subseteq \mathcal{R}_l(X)$.
		Combining these results, we can conclude that $\mathcal{R}_l(X)=\mathcal{R}_l\left(S_2\right)$. 
		
		Conversely, assume $X \in A\{2\}$ and $\mathcal{R}_l(X)=\mathcal{R}_l\left(S_2\right)$. Then,
		\begin{equation*}
X=XAX=T_2\left(S_2AT_2\right)^{(1)}S_2AT_2\left(S_2AT_2\right)^{(1)}S_2.       \end{equation*}
		By Lemma \ref{lem2.12}, we get
		\begin{equation}\label{pr2.3}
			\rank\left(X\right) \leq \rank\left(S_2AT_2\left(S_2AT_2\right)^{(1)}S_2\right) \leq \rank\left(S_2AT_2\right) \leq \rank\left(S_2\right).
		\end{equation}
		Since $\mathcal{R}_l(X)=\mathcal{R}_l\left(S_2\right)$, we obtain $\dim\left(\mathcal{R}_l(X)\right)=\dim\left(\mathcal{R}_l(S_2)\right)$, which implies $\rank\left(X\right)=\rank\left(S_2\right) $. Combining this with \eqref{pr2.3}, we obtain
		$\rank\left(X\right) \leq  \rank\left(S_2AT_2\right) \leq \rank\left(S_2\right) =\rank\left(X\right)$.
Thus, we conclude that $\rank\left(S_2AT_2\right)=\rank\left(S_2\right)$.   
		\item[$(c)$] Assume that $\rank\left(S_2AT_2\right)=\rank\left(T_2\right)$. By Definition \ref{def2.2}, we have $\mathcal{R}_l\left(S_2AT_2\right) \subseteq \mathcal{R}_l(T_2)$. As both subspaces have the same dimensions, it follows that $\mathcal{R}_l\left(S_2AT_2\right) = \mathcal{R}_l(T_2)$. Thus, $\mathcal{R}_l\left(T_2\right) \subseteq \mathcal{R}_l(S_2AT_2)$,  implying the existence of a matrix $Z \in \Q^{n \times l}$ such that $T_2=ZS_2AT_2$. Using this result, we obtain
		\begin{equation}\label{pr2.4}
T_2=Z\left(S_2AT_2\right)\left(S_2AT_2\right)^{(1)}S_2AT_2=T_2\left(S_2AT_2\right)^{(1)}S_2AT_2.
		\end{equation}
		Utilizing this, we derive
$XAX=T_2\left(S_2AT_2\right)^{(1)}S_2AT_2\left(S_2AT_2\right)^{(1)}S_2 = T_2\left(S_2AT_2\right)^{(1)}S_2=X$.
Hence, $X \in A\{2\}$.

        Since $X=T_2\left(S_2AT_2\right)^{(1)}S_2$, it follows that $\mathcal{N}_l(T_2) \subseteq \mathcal{N}_l(X)$. From \eqref{pr2.4}, we also have
$T_2=T_2\left(S_2AT_2\right)^{(1)}S_2AT_2 = XAT_2$.
		Thus, $\mathcal{N}_l(X) \subseteq \mathcal{N}_l\left(T_2\right)$. Combining these results, we conclude that $\mathcal{N}_l(X) = \mathcal{N}_l\left(T_2\right)$.
		
		Conversely, assume $X \in A\{2\}$ and $\mathcal{N}_l(X)=\mathcal{N}_l\left(T_2\right)$. Then,
		\begin{equation*}
X=XAX=T_2\left(S_2AT_2\right)^{(1)}S_2AT_2\left(S_2AT_2\right)^{(1)}S_2.
		\end{equation*}
		Using Lemma \ref{lem2.12}, we obtain
		\begin{equation}\label{pr2.5}
			\rank(X) \leq \rank\left(S_2AT_2\left(S_2AT_2\right)^{(1)}S_2\right) \leq \rank\left(S_2AT_2\right) \leq \rank(T_2).
		\end{equation}
		Since $\mathcal{N}_l(X)=\mathcal{N}_l\left(T_2\right)$, we get $\dim\left(\mathcal{N}_l(X)\right)=\dim\left(\mathcal{N}_l\left(T_2\right)\right)$. Applying Lemma \ref{lem2.8}, we conclude that $\rank(X)=\rank\left(T_2\right)$. Substituting this into \eqref{pr2.5}, we obtain
$\rank(X) \leq \rank\left(S_2AT_2\right) \leq \rank\left(T_2\right)=\rank(X)$.
		Thus, we conclude that $\rank\left(S_2AT_2\right)=\rank(T_2)$. 
		\item[$(d)$]  By combining parts $(b)$ and $(c)$, we obtain that $X \in A\{2\}_{l,(S_2,T_2)}$ if and only if 
$\rank(S_2AT_2) = \rank(S_2) = \rank(T_2)$.
It remains to show that under this condition the solution is unique.
Suppose that $X_1$ and $X_2$ are two solutions satisfying
\[
X_iAX_i = X_i, ~ R_l(X_i)=R_l(S_2), ~ N_l(X_i)=N_l(T_2), \text{~for~} i=1,2.
\]
Let $D := X_1 - X_2$. Clearly, every row of $X_i$ lies in $R_l(S_2)$, so that the rows of $D$ belong to $R_l(S_2)$, i.e. $R_l(D)\subseteq R_l(S_2)$. Moreover, since $N_l(X_1)=N_l(X_2)=N_l(T_2)$, it follows that $N_l(T_2)\subseteq N_l(D)$.  

Now, for any $s\in R_l(S_2)$, we have $s=vX_i$ for some $v$, because $R_l(X_i)=R_l(S_2)$. Using $X_iAX_i=X_i$, we obtain
$sA X_i = (vX_i)A X_i = v(X_iAX_i) = vX_i = s$.
From \cite[Theorem~3.1]{MR2775924}, we know that
$R_l(S_2)A \oplus N_l(T_2) = \Q^{1\times n}$.
Hence, every row vector $y\in \Q^{1\times n}$ can be written uniquely as $y=u+w$ with $u\in R_l(S_2)A$ and $w\in N_l(T_2)$.  For such a decomposition, we compute
$yD = uD + wD$.
Since $w\in N_l(T_2)\subseteq N_l(D)$, we have $wD=0$. Thus $yD=uD$. Moreover, since $u\in R_l(S_2)A$, there exists $s\in R_l(S_2)$ such that $u=sA$, and therefore
$yD=uD = (sA)D=sAX_1-sAX_2$.
But as shown above, $sAX_i=s$ for all $s\in R_l(S_2)$, so
$sAX_1 = s=sAX_2$. Consequently, $yD=0$.
Since this holds for arbitrary $y\in \Q^{1\times n}$, we conclude that $yD=0$, which implies $D=0$. Hence, $X_1=X_2$, proving uniqueness.
		\item[$(e)$] The result directly follows from parts $(a)$ and $(d)$.
        \end{enumerate}
\end{proof}
We now present the conditions required for the existence of a generalized inverse of a matrix with prescribed right and left range and null space. We also provide a corresponding expression for this inverse.
\begin{theorem}\label{thm3.6}
	Let $A \in \Q_{\nu}^{m \times n}$, $S \in \Q^{n \times m}$, $T \in \Q^{n \times m}$, and $X=S\left(TAS\right)^{(1)}T$, where $\left(TAS\right)^{(1)}$ is a fixed but arbitrary element of $\left(TAS\right)\left\{1\right\}$. Then
	\begin{enumerate}[noitemsep]
	    \item[$(a)$] There exists a unique $X=A^{(2)}_{(S, T), (T, S)}$ if and only if $\rank\left(TAS\right)=\rank\left(S\right)=\rank\left(T\right)$.
		\item[$(b)$] There exists a unique $X=A^{(1,2)}_{(S, T), (T, S)}$ if and only if $\rank\left(TAS\right)=\rank\left(S\right)=\rank\left(T\right)=\nu$.
	\end{enumerate}
\end{theorem}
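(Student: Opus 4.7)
The plan is to deduce Theorem 3.6 as a direct corollary of Theorems 3.4 and 3.5 by observing that the single expression $X = S(TAS)^{(1)}T$ simultaneously fits the hypotheses of both preceding theorems under two different identifications of the free parameters. For part (a), I would first set $S_1 := S$ and $T_1 := T$ in Theorem 3.4; the matrix $S_1(T_1 A S_1)^{(1)} T_1$ then coincides literally with $X$, and part (d) of that theorem yields the equivalence between the right-sided subspace conditions $\mathcal{R}_r(X) = \mathcal{R}_r(S)$, $\mathcal{N}_r(X) = \mathcal{N}_r(T)$ (together with $XAX = X$) and the rank equality $\rank(TAS) = \rank(S) = \rank(T)$.

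Next, I would set $S_2 := T$ and $T_2 := S$ in Theorem 3.5, noting that $T_2(S_2 A T_2)^{(1)} S_2 = S(TAS)^{(1)}T = X$ as well. Part (d) of that theorem delivers the equivalence between the left-sided conditions $\mathcal{R}_l(X) = \mathcal{R}_l(T)$, $\mathcal{N}_l(X) = \mathcal{N}_l(S)$ (together with $XAX = X$) and the identical rank equality $\rank(TAS) = \rank(T) = \rank(S)$. Since both equivalences are governed by exactly the same rank condition, conjoining them produces precisely the characterization claimed in part (a). Part (b) then follows by the same argument applied to parts (e) of Theorems 3.4 and 3.5; the additional requirement $\rank(TAS) = r$ appearing there is equivalent, via part (a) of either theorem, to the extra relation $AXA = A$ that upgrades a $\{2\}$-inverse to a $\{1,2\}$-inverse.

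There is no genuine computational obstacle, since the two preceding theorems do all the heavy work. The only delicate point to verify is that the parameter identifications align correctly: in Theorem 3.5 the roles of the two auxiliary matrices are effectively swapped relative to Theorem 3.4, and it is precisely this swap that produces the cross-pattern $(S,T)$ governing the right-sided subspaces and $(T,S)$ governing the left-sided subspaces in the notation $A^{(2)}_{(S,T),(T,S)}$ and $A^{(1,2)}_{(S,T),(T,S)}$. Once this bookkeeping is checked, both equivalences reduce to already-proved statements and the theorem follows immediately.
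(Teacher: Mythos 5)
Your proposal is correct and is exactly the route the paper takes: its proof of Theorem 3.6 consists of the single remark that both parts follow directly from Theorems \ref{thm3.4} and \ref{thm3.5}, and your substitutions $S_1:=S$, $T_1:=T$ and $S_2:=T$, $T_2:=S$ (so that $S(TAS)^{(1)}T$ matches both canonical forms) together with parts $(d)$ and $(e)$ of those theorems are precisely the bookkeeping that remark leaves implicit. Nothing further is needed.
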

\begin{proof}
	The proof for both parts follows directly from Theorems \ref{thm3.4} and \ref{thm3.5}.
\end{proof}
The following theorems establish an explicit formula for the Moore–Penrose inverse, Drazin inverse, and group inverse of a quaternion matrix in terms of its outer inverse under predefined subspace conditions.
\begin{theorem}\label{thm3.7}
	Let $A \in \Q^{m \times n}$. Then, the Moore-Penrose inverse $A^{\dagger}$ is given by
$A^{\dagger} = A^{(2)}_{l,\left(A^*, A^*\right)}=A^{(2)}_{r,\left(A^*, A^*\right)}=A^*\left(A^*AA^*\right)^{(1)}A^*$,
where $\left(A^*AA^*\right)^{(1)} \in \left(A^*AA^*\right)\{1\}$. 
\end{theorem}
\begin{proof}
We prove the result for $A^{\dagger} = A^{(2)}_{l, (A^*, A^*)}$. The proof for $A^{\dagger} = A^{(2)}_{r, (A^*, A^*)}$ follows similarly.
 Evidently, $A^{\dagger} = A^{(2)}$. Thus, it remains to show that
  $\mathcal{R}_l(A^{\dagger}) = \mathcal{R}_l(A^*) ~ \text{and} ~ \mathcal{N}_l(A^{\dagger}) = \mathcal{N}_l(A^*)$.
 Using the properties of the Moore-Penrose inverse, we have
 $A^{\dagger} = A^{\dagger} A A^{\dagger} = A^{\dagger} (A A^{\dagger})^*  = A^{\dagger} (A^{\dagger})^* A^*$ and $A^* = (A A^{\dagger} A)^* = A^* (A^{\dagger})^* A^* = A^* (A A^{\dagger})^* = A^* A A^{\dagger}$.
  Using the above identities and Definition \ref{def2.2}, we get 
$\mathcal{R}_l(A^{\dagger}) \subseteq \mathcal{R}_l(A^*)$ and $\mathcal{R}_l(A^*) \subseteq \mathcal{R}_l(A^{\dagger})$.
    Consequently,
   $\mathcal{R}_l(A^{\dagger}) = \mathcal{R}_l(A^*)$.
   
Next, we show that the left null spaces are also equal. Let $u \in \mathcal{N}_l(A^*)$, such that $u A^* = 0$. Then
 $u A^* (A^{\dagger})^* = 0 \implies u (A^{\dagger} A)^* = 0 \implies u A^{\dagger} A = 0 \implies u A^{\dagger} A A^{\dagger} = 0 \implies u A^{\dagger} = 0$.
    Thus, $u \in \mathcal{N}_l(A^{\dagger})$, and hence,
  $\mathcal{N}_l(A^*) \subseteq \mathcal{N}_l(A^{\dagger})$.
Conversely, let $v \in \mathcal{N}_l(A^{\dagger})$, so that $v A^{\dagger} = 0$. Then
$v A^{\dagger} A = 0 \implies v (A^{\dagger} A)^* = 0 \implies v A^* (A^{\dagger})^* A^* = 0 \implies v (A A^{\dagger} A)^* = 0 \implies v A^* = 0$.
    Thus, $v \in \mathcal{N}_l(A^*)$, hence, $\mathcal{N}_l(A^{\dagger}) \subseteq \mathcal{N}_l(A^*)$. Consequently,
   $\mathcal{N}_l(A^{\dagger}) = \mathcal{N}_l(A^*)$.
Therefore, $A^{\dagger} = A^{(2)}_{l, (A^*, A^*)}$.  
This completes the proof.
\end{proof}
\begin{theorem}\label{thm3.7d}
    Let $A \in \Q^{m \times m}$ with $\text{Ind}(A) = k$. Then, the Drazin inverse of $A$ is given by  
$A^{D} = A^{(2)}_{l,\left(A^k, A^k\right)} = A^{(2)}_{r,\left(A^k, A^k\right)}$.
\end{theorem}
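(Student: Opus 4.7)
The plan is to mimic the structure of the proof of Theorem \ref{thm3.7}, but replace the Moore--Penrose identities with the defining identities of the Drazin inverse. Recall that $A^D$ satisfies $A^{k+1}A^D = A^k$, $A^D A A^D = A^D$, and $AA^D = A^D A$. The second identity immediately gives $A^D \in A\{2\}$, so the only real work is to verify the range and null-space conditions
\[
\mathcal{R}_l(A^D) = \mathcal{R}_l(A^k), \quad \mathcal{N}_l(A^D) = \mathcal{N}_l(A^k),
\]
and their right-sided analogues.

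The key preliminary step will be to iterate $A^D = A^D A A^D$ together with the commutativity $AA^D = A^D A$ to obtain the compact identity
\[
A^D = (A^D)^{k+1} A^k = A^k (A^D)^{k+1}.
\]
A short induction on $j$ shows $A^D = (A^D)^{j+1} A^j$, and commutativity of $A^D$ with any power of $A$ lets us move $A^k$ across the product. Together with the Drazin identity $A^{k+1}A^D = A^k = A^D A^{k+1}$, we therefore have \emph{two-sided} factorizations relating $A^D$ and $A^k$ in both orders, which is precisely what is needed to handle both left and right range/null spaces.

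With these factorizations in hand, the range inclusions follow from Lemmas \ref{lem2.9} and \ref{lem2.10}: the identity $A^D = (A^D)^{k+1} A^k$ gives $\mathcal{R}_l(A^D) \subseteq \mathcal{R}_l(A^k)$ and $\mathcal{R}_r(A^D) \subseteq \mathcal{R}_r(A^k)$ after writing the factorization in the appropriate order, while $A^k = A^{k+1}A^D = A^D A^{k+1}$ gives the reverse inclusions. For the null-space equalities I would argue directly: if $u A^k = 0$, then $u A^D = u A^k (A^D)^{k+1} = 0$, and conversely if $u A^D = 0$, then $u A^k = u A^{k+1} A^D = (uA^D)A^{k+1} = 0$ using commutativity; hence $\mathcal{N}_l(A^D) = \mathcal{N}_l(A^k)$. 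The right-sided null-space argument is symmetric, using $A^k = A^D A^{k+1}$ and $A^D = A^k (A^D)^{k+1}$ on the right.

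The main obstacle I anticipate is purely notational rather than conceptual: because $\mathbb{Q}$ is non-commutative, one must be careful to write each factorization in the correct order so that Lemmas \ref{lem2.9} and \ref{lem2.10} apply in the intended direction (left factor on the left for $\mathcal{R}_l$, right factor on the right for $\mathcal{R}_r$). The commutativity $AA^D = A^D A$ is precisely the feature that makes both the left and right versions work simultaneously, and once the two-sided factorizations are written down, the verification reduces to the same four short inclusions as in Theorem \ref{thm3.7}.
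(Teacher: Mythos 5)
Your proposal is correct and follows essentially the same route as the paper's own proof: both derive the two-sided identities $A^D=(A^D)^{k+1}A^k=A^k(A^D)^{k+1}$ and $A^k=A^{k+1}A^D=A^DA^{k+1}$ from the Drazin axioms plus commutativity, then obtain the range equalities from these factorizations and the null-space equalities by the same post-multiplication arguments. Your explicit appeal to Lemmas \ref{lem2.9} and \ref{lem2.10} and the short induction on $j$ only spell out steps the paper leaves implicit.
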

\begin{proof}
    We aim to prove that  
  $A^D = A^{(2)}_{l,\left(A^k, A^k\right)}$. 
    Because $A^D$ is an outer inverse, it suffices to establish that  
     $\mathcal{R}_l(A^D) = \mathcal{R}_l(A^k)$ and $\mathcal{N}_l(A^D) = \mathcal{N}_l(A^k)$.
 From the defining properties of the Drazin inverse, we have 
    \begin{equation}\label{eqdra1}
        A^D = A^D A A^D, ~ AA^D = A^D A, ~ A^{k+1} A^D = A^k.
    \end{equation}
    Using these identities, it follows that
    \begin{equation}\label{eqdra2}
        A^k = A^D A^{k+1}, ~ A^D = A^k (A^D)^{k+1}, ~ A^D = (A^D)^{k+1} A^k.
    \end{equation}
    We now verify the range and null space conditions. From Definition~\ref{def2.2} and using \eqref{eqdra1} and \eqref{eqdra2}, we obtain
 $\mathcal{R}_l(A^D) = \mathcal{R}_l(A^k)$.
    Let $x \in \mathcal{N}_l(A^D)$, that is, $x A^D = 0$. Post-multiplying by $A^{k+1}$ yields  
    $ x A^D A^{k+1} = 0$.
    Using \eqref{eqdra2}, this simplifies to $x A^k = 0$,  implying that $x \in \mathcal{N}_l(A^k)$. Hence,  
  $\mathcal{N}_l(A^D) \subseteq \mathcal{N}_l(A^k)$.
  Conversely, let $y \in \mathcal{N}_l(A^k)$, that is, $y A^k = 0$. Post-multiplying by $(A^D)^{k+1}$ yields  
    $y A^k (A^D)^{k+1} = 0$.
    Again using \eqref{eqdra2}, we obtain $y A^D = 0$, implying $y \in \mathcal{N}_l(A^D)$. Thus,  
  $\mathcal{N}_l(A^k) \subseteq \mathcal{N}_l(A^D)$.
    This establishes $\mathcal{N}_l(A^D) = \mathcal{N}_l(A^k)$. Therefore, $A^D = A^{(2)}_{l,\left(A^k, A^k\right)}$. 

    The proof of $A^D = A^{(2)}_{r,\left(A^k, A^k\right)}$ follows analogously. Hence, the result is proved.
\end{proof}
\begin{corollary}\label{cord}
    If $\text{Ind}(A) = 1$, then the group inverse of $A \in \Q^{m \times m}$ is given by  $A^{\#} = A^{(2)}_{l,\left(A, A\right)} = A^{(2)}_{r,\left(A, A\right)}$.
\end{corollary}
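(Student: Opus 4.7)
The plan is to derive this corollary directly from Theorem~\ref{thm3.7d} by specializing the index. Recall that when $\mathrm{Ind}(A)=1$, the Drazin inverse $A^D$ coincides by definition with the group inverse $A^{\#}$, that is, $A^{\#}$ is the unique matrix $X$ satisfying $AXA = A$, $XAX = X$, and $AX = XA$. So my strategy is simply to substitute $k=1$ into the characterization of the Drazin inverse provided by Theorem~\ref{thm3.7d}.

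First I would recall the statement of Theorem~\ref{thm3.7d}: for any $A \in \mathbb{Q}^{m \times m}$ with $\mathrm{Ind}(A)=k$, we have $A^D = A^{(2)}_{l,(A^k,A^k)} = A^{(2)}_{r,(A^k,A^k)}$. Since the hypothesis $\mathrm{Ind}(A)=1$ makes $A^k = A^1 = A$, this reduces to $A^D = A^{(2)}_{l,(A,A)} = A^{(2)}_{r,(A,A)}$. Then I would invoke the identification $A^{\#} = A^D$ under $\mathrm{Ind}(A)=1$ (which is the definitional remark given in the introduction of the paper right after the Drazin inverse is defined) to conclude that $A^{\#} = A^{(2)}_{l,(A,A)} = A^{(2)}_{r,(A,A)}$.

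There is essentially no obstacle here; the proof is a one-line specialization. The only thing to be mindful of is to make explicit that the index being $1$ is precisely the condition that converts the Drazin inverse into the group inverse, so that no extra verification of the defining outer-inverse properties or the range and null space conditions is needed beyond what has already been established in Theorem~\ref{thm3.7d}. Accordingly, the proof I would write is a brief two-sentence argument citing Theorem~\ref{thm3.7d} with $k=1$ together with the defining relation $A^{\#} = A^D$ when $\mathrm{Ind}(A)=1$.
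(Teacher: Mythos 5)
Your proposal is correct and matches the paper's proof exactly: the paper likewise obtains the corollary by setting $k=1$ in Theorem~\ref{thm3.7d}, with the identification $A^{\#}=A^{D}$ when $\mathrm{Ind}(A)=1$ coming from the definition given in the introduction. No gaps.
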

\begin{proof}
   The proof follows immediately by setting $k = 1$ in Theorem \ref{thm3.7d}.
\end{proof}
The following theorems establish the key results regarding the range and null space of a quaternion matrix based on its full rank decomposition.
\begin{theorem}\label{thm3.8}
    Let $A \in \Q_{\nu}^{m \times n}$, and suppose $A = F_1 G_1$ is an arbitrary full rank decomposition of $A$. Then,
$ \mathcal{R}_r(A) = \mathcal{R}_r(F_1)$ and $\mathcal{N}_r(A) = \mathcal{N}_r(G_1)$.
   \end{theorem}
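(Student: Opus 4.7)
The plan is to establish each of the two claimed equalities by first obtaining one inclusion directly from the factorization $A = F_1 G_1$ and then upgrading that inclusion to an equality via a dimension count. The key facts I will rely on are Remark~\ref{rem2.4}, which identifies $\rank(A)$ with $\dim(\mathcal{R}_r(A))$; Lemma~\ref{lem2.8}, which yields $\dim(\mathcal{R}_r(A)) + \dim(\mathcal{N}_r(A)) = n$; and Lemma~\ref{lem2.10}, which converts matrix factorizations into subspace containments. Throughout, I will use that a full rank decomposition $A = F_1 G_1$ is, by definition, one in which $F_1 \in \Q^{m \times r}$ and $G_1 \in \Q^{r \times n}$ both have rank $r = \rank(A)$.

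For the range identity, I would apply Lemma~\ref{lem2.10}(a) with $S = F_1$ and $U = G_1$ to conclude that $\mathcal{R}_r(A) \subseteq \mathcal{R}_r(F_1)$. By Remark~\ref{rem2.4}, $\dim(\mathcal{R}_r(A)) = \rank(A) = r = \rank(F_1) = \dim(\mathcal{R}_r(F_1))$, so both right quaternion subspaces live in $\Q^{m \times 1}$, have the same finite dimension, and one contains the other; hence they coincide.

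For the null space identity, I would either note directly that $G_1 x = 0$ implies $A x = F_1 G_1 x = 0$, or invoke Lemma~\ref{lem2.10}(b) with $T = G_1$ and $V = F_1$, giving $\mathcal{N}_r(G_1) \subseteq \mathcal{N}_r(A)$. A dimension count via Lemma~\ref{lem2.8}(b) then yields $\dim(\mathcal{N}_r(A)) = n - \rank(A) = n - r = n - \rank(G_1) = \dim(\mathcal{N}_r(G_1))$, so the containment is in fact an equality.

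The hard part is essentially bookkeeping: the proof contains no analytic obstacle, and the only point that deserves care is ensuring that the rank of each factor in a full rank decomposition of a quaternion matrix genuinely equals $r$ (so that Remark~\ref{rem2.4} and Lemma~\ref{lem2.8} can both be applied to match dimensions on the two sides). Given that, the argument reduces to two applications of Lemma~\ref{lem2.10} together with two dimension counts, and no one-sided inverse or additional quaternion-specific construction is needed.
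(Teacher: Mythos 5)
Your proposal is correct and follows essentially the same route as the paper: one inclusion obtained from the factorization $A=F_1G_1$, upgraded to equality by a dimension count using $\rank(F_1)=\rank(G_1)=r$ together with Remark~\ref{rem2.4} and Lemma~\ref{lem2.8}. The only cosmetic difference is that you get $\mathcal{R}_r(A)\subseteq\mathcal{R}_r(F_1)$ by invoking Lemma~\ref{lem2.10}(a), whereas the paper writes out each column of $A$ explicitly as a right linear combination of the columns of $F_1$; the null-space argument is identical.
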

\begin{proof}
Let $\text{col}_i(A)$ and $\text{col}_i(F_1)$ denote the $i^{\text{th}}$ column of matrix $A$ and matrix $F_1$, respectively. Consider $G_1=(g_{ij})_{\nu \times n}$. We have
    \begin{equation*}
        \begin{bmatrix}
            \text{col}_1(A) & \text{col}_2(A) & \cdots & \text{col}_n(A)
        \end{bmatrix} = \begin{bmatrix}
            \text{col}_1(F_1) & \text{col}_2(F_1) & \cdots & \text{col}_{\nu}(F_1)
        \end{bmatrix}
        \begin{bmatrix}
            g_{11} & g_{12} & \cdots & g_{1n} \\
            g_{21} & g_{22} & \cdots & g_{2n} \\
            \vdots & \vdots &        & \vdots\\
            g_{\nu 1} & g_{\nu 2} & \cdots & g_{\nu n}
        \end{bmatrix}.
    \end{equation*}
 Therefore,
$\text{col}_i(A) = \text{col}_1(F_1) g_{1i} + \text{col}_2(F_1) g_{2i} + \cdots + \text{col}_{\nu}(F_1) g_{\nu i}$, for $i=1,2,\ldots,n$. Thus,
$\text{col}_i(A) \in \mathcal{R}_r(F_1)$, for $i=1,2,\ldots,n$.
    Consequently,
  $\mathcal{R}_r(A) \subseteq \mathcal{R}_r(F_1)$.
    Since $\nu = \rank(A) = \dim(\mathcal{R}_r(A)) = \dim(\mathcal{R}_r(F_1))$, it follows that
   $\mathcal{R}_r(A) = \mathcal{R}_r(F_1)$.
   
    Next, we investigate the relationship between the null spaces. Let $x \in \mathcal{N}_r(G_1)$, implying that $G_1 x = 0$. Consequently, $F_1 G_1 x = 0$, implying 
that $A x = 0$.
    Thus, $x \in \mathcal{N}_r(A)$, leading to
   $\mathcal{N}_r(G_1) \subseteq \mathcal{N}_r(A)$.
    Furthermore, we have
$\dim(\mathcal{N}_r(G_1)) = n - \dim(\mathcal{R}_r(G_1)) = n - \rank(G_1) = n - \nu$ and
    $\dim(\mathcal{N}_r(A)) = n - \dim(\mathcal{R}_r(A)) = n - \rank(A) = n - \nu.$
    Since $\dim(\mathcal{N}_r(G_1)) = \dim(\mathcal{N}_r(A))$, we conclude that
   $\mathcal{N}_r(A) = \mathcal{N}_r(G_1)$.
    This completes the proof.
\end{proof}
\begin{theorem}\label{thm3.9}
Let $A \in \Q_{\nu}^{m \times n}$, and suppose $A = G_2 F_2$ is an arbitrary full rank decomposition of $A$. Then,
$\mathcal{R}_l(A)=\mathcal{R}_l(F_2)$ and $\mathcal{N}_l(A) = \mathcal{N}_l(G_2)$.
\end{theorem}
\begin{proof}
  Let $\text{row}_i(A)$ and $\text{row}_i(F_2)$ denote the $i^{\text{th}}$ row of matrix $A$ and matrix $F_2$, respectively. Consider $G_2=(g_{ij})_{m \times \nu}$. We have
\begin{equation*}
    \begin{bmatrix}
        \text{row}_1(A) \\ \text{row}_2(A) \\ \vdots \\ \text{row}_m(A)
    \end{bmatrix} = \begin{bmatrix}
        g_{11} & g_{12} & \cdots & g_{1 \nu} \\
        g_{21} & g_{22} & \cdots & g_{2 \nu} \\
        \vdots & \vdots &        & \vdots\\
        g_{m1} & g_{m2} & \cdots & g_{m \nu}
    \end{bmatrix}\begin{bmatrix}
        \text{row}_1(F_2) \\ \text{row}_2(F_2) \\ \vdots \\ \text{row}_{\nu}(F_2)
    \end{bmatrix}.
\end{equation*}
Therefore, $\text{row}_i(A) =  g_{i1} \text{row}_1(F_2) + g_{i2} \text{row}_2(F_2) + \cdots + g_{i \nu} \text{row}_{\nu}(F_2)$, for $i=1,2,\ldots,m$.
Thus, $\text{row}_i(A) \in \mathcal{R}_l(F_2)$, for $i=1,2,\ldots,m$.
Consequently,
$\mathcal{R}_l(A) \subseteq \mathcal{R}_l(F_2)$.
Since $\nu = \rank(A) = \dim(\mathcal{R}_l(A)) = \dim(\mathcal{R}_l(F_2))$, it follows that
$\mathcal{R}_l(A) = \mathcal{R}_l(F_2)$.

Next, we investigate the relationship between the null spaces. Let $x \in \mathcal{N}_l(G_2)$, implying that $x G_2 = 0$. Consequently,
$x G_2 F_2 = 0$, implying 
that $x A = 0$.
Thus, $x \in \mathcal{N}_l(A)$, leading to
$\mathcal{N}_l(G_2) \subseteq \mathcal{N}_l(A)$.
Furthermore, we have
$ \dim(\mathcal{N}_l(G_2)) = m - \dim(\mathcal{R}_l(G_2)) = m - \rank(G_2) = m - \nu$ and
$\dim(\mathcal{N}_l(A)) = m - \dim(\mathcal{R}_l(A)) = m - \rank(A) = m - \nu$.
As $\dim(\mathcal{N}_l(G_2)) = \dim(\mathcal{N}_l(A))$, we conclude that
$\mathcal{N}_l(A) = \mathcal{N}_l(G_2)$.
This completes the proof.  
\end{proof}
The following theorem establishes the explicit representation of $A^{(2)}_{r,\left(W_1, W_1\right)}$, based on the full rank factorization of $W_1$.
\begin{theorem}\label{thm3.10}
Let $A \in \Q_{\nu}^{m \times n}$ and $W_1 \in \Q^{n \times m}$ be a matrix such that $\dim(\mathcal{R}_r(W_1)) = s \leq \nu$ and $\dim(\mathcal{N}_r(W_1)) = m-s$. Assume $W_1 = S_1 T_1$ is a full rank factorization of $W_1$, and define
$ X = S_1 (T_1 A S_1)^{-1} T_1$.
If $A$ has a $\{2\}$-inverse $A^{(2)}_{r,(W_1,W_1)}$, then the following statements hold:
\begin{enumerate}[noitemsep]
    \item[$(a)$] $T_1 A S_1$ is an invertible matrix.
    \item[$(b)$] $A^{(2)}_{r,(W_1,W_1)} = X = A^{(2)}_{r,(S_1,T_1)}$.
\end{enumerate}
\end{theorem}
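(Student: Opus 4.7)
The plan is to exploit the full rank factorization $W_1 = S_1 T_1$ to recast the range and null space constraints defining $A^{(2)}_{r,(W_1,W_1)}$ in terms of $S_1$ and $T_1$, and then invoke Theorem \ref{thm3.4}(d). First I would observe that since $W_1 = S_1 T_1$ is a full rank factorization with $\rank(W_1) = s$, the factors satisfy $S_1 \in \Q^{n \times s}$ and $T_1 \in \Q^{s \times m}$, each of rank $s$, and Theorem \ref{thm3.8} gives the identifications $\mathcal{R}_r(W_1) = \mathcal{R}_r(S_1)$ and $\mathcal{N}_r(W_1) = \mathcal{N}_r(T_1)$. Consequently, a matrix has range $\mathcal{R}_r(W_1)$ and null space $\mathcal{N}_r(W_1)$ if and only if it has range $\mathcal{R}_r(S_1)$ and null space $\mathcal{N}_r(T_1)$.

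For part (a), let $Y := A^{(2)}_{r,(W_1,W_1)}$, which exists by hypothesis. Applying Lemma \ref{lem2.10}(a) to the inclusion $\mathcal{R}_r(Y) \subseteq \mathcal{R}_r(S_1)$ yields $Y = S_1 U$ for some $U \in \Q^{s \times m}$, while applying Lemma \ref{lem2.10}(b) to the inclusion $\mathcal{N}_r(T_1) \subseteq \mathcal{N}_r(Y)$ yields $Y = V T_1$ for some $V \in \Q^{n \times s}$. The key step is to substitute these two representations asymmetrically into the outer inverse identity $YAY = Y$, rewriting the left factor as $V T_1$ and the right factor as $S_1 U$, to obtain
\[
Y \;=\; V\,(T_1 A S_1)\,U.
\]
Since $T_1 A S_1 \in \Q^{s \times s}$, Lemma \ref{lem2.12}(b) now gives $s = \rank(Y) \leq \rank(T_1 A S_1) \leq s$, so $T_1 A S_1$ has full rank $s$ and is therefore invertible.

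For part (b), with $T_1 A S_1$ invertible the matrix $(T_1 A S_1)^{-1}$ is in particular a $\{1\}$-inverse of $T_1 A S_1$, so $X = S_1 (T_1 A S_1)^{-1} T_1$ is of the form covered by Theorem \ref{thm3.4}(d). Since $\rank(T_1 A S_1) = s = \rank(S_1) = \rank(T_1)$, that theorem gives $X = A^{(2)}_{r,(S_1,T_1)}$. Combining with the identifications from Theorem \ref{thm3.8}, the defining subspace conditions for $A^{(2)}_{r,(S_1,T_1)}$ coincide with those for $A^{(2)}_{r,(W_1,W_1)}$, so $X = A^{(2)}_{r,(W_1,W_1)}$ as well.

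The main obstacle, in my view, is recognizing that $YAY = Y$ must be unpacked asymmetrically, the leftmost $Y$ rewritten as $VT_1$ and the rightmost as $S_1U$, to produce the sandwich factorization $Y = V(T_1AS_1)U$ that passes through the $s\times s$ matrix $T_1AS_1$. Once this observation is made, the invertibility of $T_1AS_1$ is forced by a purely dimensional rank argument, and both conclusions of the theorem reduce to bookkeeping via Theorem \ref{thm3.4}(d) and Theorem \ref{thm3.8}.
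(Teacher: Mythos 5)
Your proposal is correct, but it reaches part (a) by a genuinely different route than the paper. The paper never touches the hypothesized inverse directly: it invokes the existence criterion $A\,\mathcal{R}_r(W_1)\oplus\mathcal{N}_r(W_1)=\Q^{m\times 1}$ (cited from Song--Wang, Theorem 3.1), uses it to count $\dim(\mathcal{R}_r(AS_1))=s$, and then shows $\mathcal{N}_r(T_1AS_1)=\{0\}$ by intersecting $A\,\mathcal{R}_r(W_1)$ with $\mathcal{N}_r(W_1)$. You instead work with the assumed inverse $Y=A^{(2)}_{r,(W_1,W_1)}$ itself, factor it two ways via Lemma \ref{lem2.10} as $Y=S_1U=VT_1$, and substitute asymmetrically into $YAY=Y$ to get the sandwich $Y=V(T_1AS_1)U$, after which $s=\rank(Y)\le\rank(T_1AS_1)\le s$ forces invertibility. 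Your argument is more self-contained (no appeal to the external direct-sum theorem) and shorter; the paper's version buys the geometric splitting picture and the byproduct that $AW_1=(AS_1)T_1$ is itself a full rank factorization. For part (b), you shortcut by noting $(T_1AS_1)^{-1}$ is a $\{1\}$-inverse and citing Theorem \ref{thm3.4}(d) with $\rank(T_1AS_1)=\rank(S_1)=\rank(T_1)=s$, whereas the paper re-verifies $XAX=X$, $\mathcal{R}_r(X)=\mathcal{R}_r(S_1)$, and $\mathcal{N}_r(X)=\mathcal{N}_r(T_1)$ by hand; both are legitimate, and both (yours and the paper's) pass from ``$X$ satisfies the defining conditions of $A^{(2)}_{r,(W_1,W_1)}$'' to the stated equality at the same implicit level of uniqueness of an outer inverse with prescribed range and null space.
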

\begin{proof}
\begin{enumerate}
\item[$(a)$]
According to Theorem \ref{thm3.8}, we have the following relations:
\begin{equation}\label{3.10.1}
    \mathcal{R}_r(W_1)=\mathcal{R}_r(S_1), ~ \mathcal{N}_r(W_1)=\mathcal{N}_r(T_1).
\end{equation}
Now, observe that $\mathcal{R}_r(AS_1) = A \mathcal{R}_r(S_1) = A \mathcal{R}_r(W_1)$.
Using \cite[Theorem $3.1$]{MR2775924}, we get 
\begin{equation}\label{3.10.2}
A \mathcal{R}_r(W_1) \oplus \mathcal{N}_r(W_1) = \Q^{m \times 1}.
\end{equation}
Utilizing this, we compute
$\dim(\mathcal{R}_r(AS_1)) = \dim(A \mathcal{R}_r(W_1))
     = m- \dim(\mathcal{N}_r(W_1)) = m -(m-s) = s$.
Therefore, $\rank(AS_1) = s.$ By a similar argument, we also obtain
$\rank(AW_1) = s.$ This implies that
$AW_1 = AS_1 T_1$ is a full rank factorization of $AW_1$.

Next, suppose $y \in \Q^{s \times 1}$ satisfies $(T_1 AS_1) y = 0$. This implies that $AS_1 y \in \mathcal{N}_r(T_1)$.
Using equation \eqref{3.10.1}, we obtain $AS_1 y \in \mathcal{N}_r(W_1).$
 In addition, because $AS_1 y \in \mathcal{R}_r(AS_1) = A \mathcal{R}_r(W_1)$, we have
$AS_1y \in A \mathcal{R}_r(W_1) \cap \mathcal{N}_r(W_1)$.
Using equation \eqref{3.10.2}, it follows that
$AS_1 y = 0$, which implies $y \in \mathcal{N}_r(AS_1)$.
Now, observe that
$\dim(\mathcal{N}_r(AS_1)) = s - \dim(\mathcal{R}_r(AS_1)) = s - \rank(AS_1) = 0$,
Therefore, we conclude that $y = 0.$
Hence, for any $y \in \Q^{s \times 1}$,
$(T_1 AS_1) y = 0$ implies $y = 0$.
Therefore,
$\dim(\mathcal{N}_r(T_1 AS_1)) = 0$,
and consequently,
$\rank(T_1 AS_1) = \dim(\mathcal{R}_r(T_1 AS_1)) = s - \dim(\mathcal{N}_r(T_1 AS_1)) = s$,
implying that $T_1 AS_1$ is a nonsingular matrix; hence, it is invertible. 
\item[$(b)$]
We show that $X$ satisfies $\{2\}$-inverse property:
$XAX = S_1 (T_1 AS_1)^{-1} T_1 A S_1 (T_1 AS_1)^{-1} T_1 = S_1 (T_1 AS_1)^{-1} T_1 = X$,
thus, $X \in A\{2\}.$ Applying Definition \ref{def2.2}, we obtain the following results
\begin{equation}\label{3.10.3}
    \mathcal{R}_r(X) = \mathcal{R}_r \left( S_1 (T_1 AS_1)^{-1} T_1 \right) \subseteq \mathcal{R}_r(S_1),  ~\mathcal{N}_r(T_1) \subseteq \mathcal{N}_r \left( S_1 (T_1 AS_1)^{-1} T_1 \right) = \mathcal{N}_r(X).
\end{equation}
We will now prove that $\mathcal{R}_r(S_1) \subseteq \mathcal{R}_r(X)$ and $\mathcal{N}_r(X) \subseteq \mathcal{N}_r(T_1)$. We observe the relation
$S_1 = S_1 (T_1 AS_1)^{-1} T_1 A S_1 = X A S_1$,
which implies that
\begin{equation}\label{3.10.5}
    \mathcal{R}_r(S_1) = \mathcal{R}_r(X A S_1) \subseteq \mathcal{R}_r(X).
\end{equation}
Next, let $u \in \mathcal{N}_r(X)$. This implies that $X u = 0,$
 leading to
$S_1 (T_1 AS_1)^{-1} T_1 u = 0$.
Because $S_1$ is a full column rank matrix, it has a left inverse (see Lemma~\ref{rr1}). Pre-multiplying both sides by the left inverse of $S_1$, we get
$(T_1 AS_1)^{-1} T_1 u = 0$.
Again, by pre-multiplying both sides by $T_1 AS_1$, we obtain $T_1u=0$ which shows that $u \in \mathcal{N}_r(T_1)$. Therefore,
\begin{equation}\label{3.10.6}
  \mathcal{N}_r(X) \subseteq \mathcal{N}_r(T_1).
\end{equation}
Combining equations \eqref{3.10.3}, \eqref{3.10.5}, and \eqref{3.10.6}, we conclude that
$\mathcal{N}_r(X) = \mathcal{N}_r(T_1)$ and $\mathcal{R}_r(X) = \mathcal{R}_r(S_1)$.
Therefore, $A^{(2)}_{r,(S_1,T_1)} = X = A^{(2)}_{r,(W_1,W_1)}.$ This completes the proof.
\end{enumerate}
\end{proof}
The following theorem establishes the explicit representation of $A^{(2)}_{l,\left(W_2, W_2\right)}$, based on the full rank factorization of $W_2$.
\begin{theorem}\label{thm3.11}
Let $A \in \Q_{\nu}^{m \times n}$ and $W_2 \in \Q^{n \times m}$ be a matrix such that $\dim(\mathcal{R}_l(W_2)) = t \leq \nu$ and $\dim(\mathcal{N}_l(W_2)) = n-t$. Assume $W_2 = T_2 S_2$ is a full rank factorization of $W_2$, and define
$X = T_2 (S_2 A T_2)^{-1} S_2$.
If $A$ has a $\{2\}$-inverse $A^{(2)}_{l,(W_2,W_2)}$, then the following statements hold:
\begin{enumerate}[noitemsep]
    \item[$(a)$] $S_2 A T_2$ is an invertible matrix.
    \item[$(b)$] $A^{(2)}_{l,(W_2,W_2)} = X = A^{(2)}_{l,(S_2,T_2)}$.
\end{enumerate}
\end{theorem}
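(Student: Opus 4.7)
The plan is to mirror the argument used in the proof of Theorem~\ref{thm3.10}, systematically replacing right range/null space conditions with their left counterparts and invoking Theorem~\ref{thm3.9} in place of Theorem~\ref{thm3.8}. From the full rank factorization $W_2 = T_2 S_2$ (with $T_2$ of full column rank and $S_2$ of full row rank), Theorem~\ref{thm3.9} yields the key identifications $\mathcal{R}_l(W_2) = \mathcal{R}_l(S_2)$ and $\mathcal{N}_l(W_2) = \mathcal{N}_l(T_2)$, which let us transfer the subspace conditions from $W_2$ to its factors.

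For part $(a)$, I intend to show that the $t \times t$ matrix $S_2 A T_2$ is invertible by proving its left null space is trivial. The hypothesis that $A^{(2)}_{l,(W_2,W_2)}$ exists supplies, via the left-sided analog of \cite[Theorem~3.1]{SongWangchang2011}, a direct sum decomposition $\mathcal{R}_l(W_2)\,A \oplus \mathcal{N}_l(W_2) = \mathbb{Q}^{1 \times n}$. Combining this with $\mathcal{R}_l(S_2 A) = \mathcal{R}_l(S_2)\,A = \mathcal{R}_l(W_2)\,A$ and a dimension count using Lemma~\ref{lem2.8} yields $\rank(S_2 A) = t$, and similarly $\rank(W_2 A) = t$. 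Now suppose $y \in \mathbb{Q}^{1 \times t}$ satisfies $y(S_2 A T_2) = 0$. Then $y S_2 A \in \mathcal{N}_l(T_2) = \mathcal{N}_l(W_2)$, while also $y S_2 A \in \mathcal{R}_l(S_2)\,A = \mathcal{R}_l(W_2)\,A$. The direct sum forces $y S_2 A = 0$, placing $y$ in $\mathcal{N}_l(S_2 A)$, which is trivial since $\rank(S_2 A) = t$. Hence $y = 0$ and $S_2 A T_2$ is invertible.

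For part $(b)$, the $\{2\}$-inverse identity $XAX = T_2 (S_2 A T_2)^{-1} S_2 A T_2 (S_2 A T_2)^{-1} S_2 = X$ follows by inspection, so $X \in A\{2\}$. The inclusion $\mathcal{R}_l(X) \subseteq \mathcal{R}_l(S_2)$ is immediate from Definition~\ref{def2.2}, and the identity $S_2 = S_2 A T_2 (S_2 A T_2)^{-1} S_2 = S_2 A X$ supplies the reverse inclusion. Likewise $\mathcal{N}_l(T_2) \subseteq \mathcal{N}_l(X)$ is immediate from the definition of $X$. For the reverse inclusion, if $u X = 0$ then $u T_2 (S_2 A T_2)^{-1} S_2 = 0$; post-multiplying by a right inverse of $S_2$ (which exists since $S_2$ has full row rank) and then by $S_2 A T_2$ yields $u T_2 = 0$. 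Combining these inclusions with the identifications from Theorem~\ref{thm3.9} gives $\mathcal{R}_l(X) = \mathcal{R}_l(S_2) = \mathcal{R}_l(W_2)$ and $\mathcal{N}_l(X) = \mathcal{N}_l(T_2) = \mathcal{N}_l(W_2)$, so $X = A^{(2)}_{l,(S_2,T_2)} = A^{(2)}_{l,(W_2,W_2)}$.

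The main obstacle, as in Theorem~\ref{thm3.10}, is establishing the invertibility in part $(a)$. Care is needed in verifying that the direct sum decomposition of \cite{SongWangchang2011} carries over to the left-sided setting on $\mathbb{Q}^{1 \times n}$, and in routing each dimension argument through the left-sided Lemma~\ref{lem2.8} and Theorem~\ref{thm3.9}. Once invertibility is in hand, the remainder of the proof is a routine left-sided mirror of part $(b)$ of Theorem~\ref{thm3.10}.
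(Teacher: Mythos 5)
Your proposal is correct and follows essentially the same route as the paper's proof: Theorem~\ref{thm3.9} to transfer the subspace conditions to the factors, the left-sided direct sum $\mathcal{R}_l(W_2)A \oplus \mathcal{N}_l(W_2) = \mathbb{Q}^{1\times n}$ with a dimension count to get $\rank(S_2A)=t$ and hence invertibility of $S_2AT_2$, and then the same verification of $XAX=X$ together with the identity $S_2 = S_2AX$ and the right-inverse-of-$S_2$ argument for the two subspace equalities.
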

\begin{proof}
\begin{enumerate}
    \item[$(a)$]
According to Theorem \ref{thm3.9}, we have the following relations:
\begin{equation}\label{3.11.1}
    \mathcal{R}_l(W_2)=\mathcal{R}_l(S_2), ~ \mathcal{N}_l(W_2)=\mathcal{N}_l(T_2).
\end{equation}
Now, observe that $\mathcal{R}_l(S_2 A) = \mathcal{R}_l(S_2) A = \mathcal{R}_l(W_2) A$.
Using \cite[Theorem $3.1$]{MR2775924}, we get 
\begin{equation}\label{3.11.2}
 \mathcal{R}_l(W_2) A \oplus \mathcal{N}_l(W_2) = \Q^{1 \times n}.
\end{equation}
Utilizing this, we compute
$$\dim(\mathcal{R}_l(S_2 A)) = \dim(\mathcal{R}_l(W_2) A) 
                            = n- \dim(\mathcal{N}_l(W_2)) = n -(n-t) = t.$$
Therefore, $\rank(S_2 A) = t.$ By a similar argument, we also obtain
$\rank(W_2 A) = t.$ This implies that
$W_2 A = T_2 S_2 A$ is the full rank factorization for $ W_2A$.

Next, suppose $y \in \Q^{1 \times t}$ satisfies $y (S_2 A T_2) = 0$. This implies that $y S_2 A \in \mathcal{N}_l(T_2)$. 
Using equation \eqref{3.11.1}, we obtain $y S_2 A \in \mathcal{N}_l(W_2).$
 In addition, because $y S_2 A \in \mathcal{R}_l(S_2 A) = \mathcal{R}_l(W_2) A$, we have
$y S_2 A \in \mathcal{R}_l(W_2) A \cap \mathcal{N}_l(W_2)$.
Using equation \eqref{3.11.2}, it follows that
$y S_2 A = 0$, which implies $y \in \mathcal{N}_l(S_2 A)$.
Now, observe that
$$\dim(\mathcal{N}_l(S_2 A)) = t - \dim(\mathcal{R}_l(S_2 A)) = t - \rank(S_2 A) = 0,$$
we conclude that $y = 0.$
Hence, for any $y \in \Q^{1 \times t}$, the equation
$y(S_2 A T_2)  = 0$ implies $y = 0$.
Therefore,
$\dim(\mathcal{N}_l(S_2 A T_2)) = 0$,
and consequently, 
$$\rank(S_2 A T_2) = \dim(\mathcal{R}_l(S_2 A T_2)) = t - \dim(\mathcal{N}_l(S_2 A T_2)) = t,$$
implying that $S_2 A T_2$ is a nonsingular matrix; hence, it is invertible. 
\item[$(b)$]
We show that $X$ satisfies $\{2\}$-inverse property:
$$XAX = T_2 (S_2 A T_2)^{-1} S_2 A T_2 (S_2 A T_2)^{-1} S_2 = T_2 (S_2 A T_2)^{-1} S_2 = X,$$
thus, $X \in A\{2\}.$

Applying Definition \ref{def2.2}, we obtain the following results
\begin{equation}\label{3.11.3}
    \mathcal{R}_l(X) = \mathcal{R}_l \left( T_2 (S_2 A T_2)^{-1} S_2 \right) \subseteq \mathcal{R}_l(S_2) ~ \text{and} ~ \mathcal{N}_l(T_2) \subseteq \mathcal{N}_l \left( T_2 (S_2 A T_2)^{-1} S_2 \right) = \mathcal{N}_l(X).
\end{equation}
We will now prove that $\mathcal{R}_l(S_2) \subseteq \mathcal{R}_l(X)$ and $\mathcal{N}_l(X) \subseteq \mathcal{N}_l(T_2)$. We observe the relation
$S_2 = (S_2 A T_2) (S_2 A T_2)^{-1} S_2  = S_2 A X$,
which implies that
\begin{equation}\label{3.11.5}
    \mathcal{R}_l(S_2) = \mathcal{R}_l(S_2 A X) \subseteq \mathcal{R}_l(X).
\end{equation}
Next, let $v \in \mathcal{N}_l(X)$. This implies $v X = 0,$
which leads to
$v T_2 (S_2 A T_2)^{-1} S_2 = 0$.
Since $S_2$ is a full row rank matrix, it has a right inverse (see Lemma~\ref{rr1}). Post-multiplying both sides by the right inverse of $S_2$, we get
$v T_2 (S_2 A T_2)^{-1} = 0.$ Again post-multiplying both sides by $S_2 A T_2$, we obtain $v T_2 =0$ which shows that $v \in \mathcal{N}_l(T_2)$. Therefore,
\begin{equation}\label{3.11.6}
  \mathcal{N}_l(X) \subseteq \mathcal{N}_l(T_2).
\end{equation}
Combining equations \eqref{3.11.3}, \eqref{3.11.5}, and \eqref{3.11.6}, we conclude that
$\mathcal{N}_l(X) = \mathcal{N}_l(T_2)$ and $\mathcal{R}_l(X
  )= \mathcal{R}_l(S_2)$.
Therefore, $A^{(2)}_{l,(S_2,T_2)} = X = A^{(2)}_{l,(W_2,W_2)}$. This completes the proof.
\end{enumerate}
\end{proof}

\section{Computational algorithms and numerical examples}\label{sec4}
In this section, we develop algorithms for computing the generalized inverse of a quaternion matrix under prescribed right and/or left range and null space constraints. Their performance is illustrated through numerical examples. All computations are performed on a system equipped with an Intel Core $i9$-$12900K$ processor running at $3200$ MHz, with $32$ GB of RAM, using MATLAB $R2024b$. 

\subsection{Algorithms}
This subsection is dedicated to designing computational algorithms for computing generalized inverses with predefined subspace constraints, based on the theoretical results established in the previous section. To assess the computational efficiency of the proposed algorithms, we also analyze their arithmetic complexity.
Throughout this subsection, we distinguish between quaternion and complex arithmetic.  It is worth mentioning that  a $\mathbf{q}$-flop denotes one basic arithmetic operation (addition or multiplication) between quaternion numbers, while a $\mathbf{c}$-flop denotes one basic arithmetic operation between complex numbers.
Since a quaternion arithmetic operation can be represented using a fixed number of real or complex arithmetic operations, $\mathbf{q}$-flops and $\mathbf{c}$-flops differ only by constant factors. In this work, we focus on leading-order asymptotic complexity and therefore suppress such constant factors.

Theorem \ref{thm3.4} provides a powerful unified framework for computing generalized inverses, including the Moore-Penrose inverse, the group inverse, the Drazin inverse, and other constrained generalized inverses with prescribed range and null space properties. Specifically, for the given matrices 
$A \in \Q_{\nu}^{m \times n},~S_1 \in \Q^{n \times p},~T_1 \in \Q^{q \times m},
$ the matrix 
$
X = S_1 \left(T_1 A S_1\right)^{(1)} T_1
$
yields different generalized inverses depending on the rank relations among \( T_1 A S_1 \), \( S_1 \), and \( T_1 \). 

To efficiently compute \( X \), we propose the following algorithms:
\begin{itemize}[noitemsep]
    \item \textbf{Algorithm~\ref{alg1}}: Based on direct computation in QTFM. 
    \item \textbf{Algorithm~\ref{alg2}}: Based on the complex representation of quaternion matrices. 
\end{itemize}

\begin{algorithm}[htbp]
{\small{
	\caption{Computation of $X=S_1\left(T_1AS_1\right)^{(1)}T_1$ by using direct computation in QTFM}\label{alg1}
\textbf{Input:} $A \in \Q_{\nu}^{m \times n}$, $S_1 \in \Q^{n \times p}$, and $T_1  \in  \Q^{q \times m}$. \\
	\textbf{Output:} $X=X_1+X_2\j \in \Q^{n \times m}$.
	\begin{enumerate}[noitemsep]
\item Compute the SVD of $W$ using QTFM: $W = U \Sigma V^*$, where $U \in \Q^{q \times q}$ and $V \in \Q^{p \times p}$ are unitary, and 
$\Sigma = \begin{bmatrix} \Sigma_s & 0 \\ 0 & 0 \end{bmatrix} \in \R^{q \times p}$ with $\Sigma_s = \diag(\sigma_1,\ldots,\sigma_s) \in \R^{s \times s}$, $s=\rank(W)$.
        \item Define arbitrary matrices $K \in \Q^{s \times (q-s)}$, $L \in \Q^{(p-s) \times s}$, and $M \in \Q^{(p-s) \times (q-s)}$.
		\item Calculate $W^{(1)}$ as follows:
		$W^{(1)} = V\begin{bmatrix}
			\Sigma_s^{-1} & K \\
			L & M
		\end{bmatrix}U^*$.
		\item Compute $X=S_1W^{(1)}T_1$. 
	\end{enumerate}}}
\end{algorithm}

\begin{algorithm}[h]
{\small{
 \caption{Computation of $X=S_1\left(T_1AS_1\right)^{(1)}T_1$ using the complex representation of quaternion matrices}\label{alg2}
	\textbf{Input:} $A=A_1+A_2\j \in \Q_{\nu}^{m \times n}$, $S_1=S_{11}+S_{12}\j \in \Q^{n \times p}$, and $T_1=T_{11}+T_{12}\j  \in  \Q^{q \times m}$. \\
	\textbf{Output:} $X=X_1+X_2\j \in \Q^{n \times m}$.
	\begin{enumerate}[noitemsep]
   \item \textbf{Compute} $\mathbf{W=W_1+W_2\j=T_1AS_1}$ \textbf{as follows:} \begin{itemize}[noitemsep]
        \item Compute $\left(T_1\right)_r^C$, $A^C$, and $S_1^C$.
        \item Compute $W^C_r$ as: $W^C_r=(T_1)^C_rA^CS_1^C.$
        \item Extract the components of $W$: $
		W_1 = W^C_r(:,1:p), ~ W_2 = W^C_r(:,p+1:end).$
        \end{itemize}
		  \item \textbf{Compute} $\mathbf{\left(W^{(1)}\right)^C}$ \textbf{as follows:}
         \begin{itemize}[noitemsep]
        \item Compute the SVD of $W$ using the complex structure preserving algorithm \cite[Algorithm $3$]{MR4685298}. That is, compute  $W=U \Sigma V^*$, where $U \in \Q^{q \times q}$, $\Sigma=\begin{bmatrix}
			\Sigma_s & 0\\
			0 & 0
		\end{bmatrix} \in \R^{q \times p}$, and $V \in \Q^{p \times p}$. Here, $U$ and $V$ are unitary quaternion matrices, and $\Sigma_{s}=\diag(\sigma_{1},\ldots,\sigma_{s}) \in \R^{s \times s}$ contains the nonzero singular values of $W$. Here, $s=\rank(W)$. 
        \item Define arbitrary matrices $K \in \Q^{s \times (q-s)}$, $L \in \Q^{(p-s) \times s}$, and $M \in \Q^{(p-s) \times (q-s)}$. 
        \item Define $Y=\begin{bmatrix}
			\Sigma_{s}^{-1} & K \\
			L & M
		\end{bmatrix}.$ \\
        \item Compute $V^C$, $Y^C$, and $\left(U^*\right)^C$.
         \item Compute ${\left(W^{(1)}\right)}^C$ as follows:
        $\left(W^{(1)}\right)^C = V^C Y^C (U^*)^C.$
\end{itemize}
         \item \textbf{Compute} $\mathbf{X^C_r}:$ Compute $(S_1)^C_r$ and $(T_1)^C$, and form $X^C_r=\left(S_1\right)^C_r\left(W^{(1)}\right)^C (T_1)^C$.
        \item \textbf{Extract the components of} $\mathbf{X}$:
		$$X_1 = X^C_r(:,1:m), ~ X_2 = X^C_r(:,m+1:end).$$
	\end{enumerate}}}
\end{algorithm}
Both Algorithms \ref{alg1} and \ref{alg2} are based on the same theoretical formulation in Theorem~\ref{thm3.4}, but differ in their computational realization. We now analyze the computational complexity of Algorithms~\ref{alg1} and~\ref{alg2}.
\begin{table}[htbp]
\centering
{\small{
\begin{tabular}{l l|l l}
\hline

\multicolumn{2}{c|}{\textbf{Algorithm 1 ($\mathbf{q}$-flops)}} 
& \multicolumn{2}{c}{\textbf{Algorithm 2 ($\mathbf{c}$-flops)}} \\

\hline

Operation & Cost & Operation & Cost \\

\hline

Compute $W = T_1 A S_1$ 
& $\mathcal{O}(qmn + qnp)$
& Compute $W^C_r = (T_1)^C_r A^C S_1^C$ 
& $\mathcal{O}(qmn + qnp)$ \\

QSVD of $W $ 
& $\mathcal{O}\!\left(pq\min(p,q)\right)$
& SVD of $W$ 
& $\mathcal{O}\!\left(pq\min(p,q)\right)$ \\

Compute $W^{(1)}$ 
& $\mathcal{O}(p^2q + pq^2)$
& Compute $(W^{(1)})^C = V^C Y^C (U^*)^C$ 
& $\mathcal{O}(p^2q + pq^2)$ \\

Compute $X = S_1 W^{(1)} T_1$ 
& $\mathcal{O}(npq + nqm)$
& Compute $X^C_r = (S_1)^C_r (W^{(1)})^C (T_1)^C$ 
& $\mathcal{O}(npq + nqm)$ \\

\hline

\textbf{Total}  
& $\mathcal{O}\!\left(\begin{aligned}
qmn+qnp \\
+\, p^2q + pq^2
\end{aligned}\right)$
& \textbf{Total}  
& $\mathcal{O}\!\left(\begin{aligned}
qmn + qnp \\
+\,  p^2q + pq^2
\end{aligned}\right)$ \\

\hline
\end{tabular}
}}
\caption{Computational complexity of Algorithm~\ref{alg1} and Algorithm~\ref{alg2}.}
\label{table_comparison}
\end{table}

\begin{remark}\label{remt1}
Algorithms~\ref{alg1} and~\ref{alg2} have the same leading-order asymptotic complexity. The difference lies in their computational realization: Algorithm~\ref{alg1} operates directly in the quaternion domain, whereas Algorithm~\ref{alg2} uses a complex representation. 
Although $\mathbf{q}$-flops and $\mathbf{c}$-flops differ only by constant factors, the complex formulation allows the use of optimized BLAS-based routines, resulting in better practical performance for large-scale problems.
\end{remark}

\begin{remark}
The computation of \( X \) in Theorems~\ref{thm3.5} and~\ref{thm3.6} follows the same methodology as in Theorem~\ref{thm3.4}. Consequently, Algorithms~\ref{alg1} and~\ref{alg2} can be directly applied in these settings, and no separate algorithms are required.
\end{remark}

We now discuss the full rank factorization–based method. Using this approach, we develop the following two algorithms for computing the generalized inverse \( A^{(2)}_{r,(W_1, W_1)} \) based on Theorem~\ref{thm3.10}.
\begin{itemize}[noitemsep]
    \item \textbf{Algorithm~\ref{alg3}}: Utilizes direct computation in QTFM.
    \item \textbf{Algorithm~\ref{alg4}}: Employs the complex representation of quaternion matrices.
\end{itemize}
\begin{algorithm}[h]
{\small{
	\caption{Computation of $A^{(2)}_{r,\left(W_1, W_1\right)}$ using direct computation in QTFM}\label{alg3}
\textbf{Input:} $A \in \Q_{\nu}^{m \times n}$ and $W_1 \in \Q^{n \times m}$ such that $\dim\left(\mathcal{R}_r\left(W_1\right)\right) =s \leq \nu$ and $\dim\left(\mathcal{N}_r\left(W_1\right)\right) =m-s$. \\
	\textbf{Output:} $X=X_1+X_2\j=A^{(2)}_{r,\left(W_1, W_1\right)} \in \Q^{n \times m}$.
	\begin{enumerate}[noitemsep]
		\item Compute the full rank factorization of $W_1$ by using direct computation in QTFM \cite[Algorithm $1$]{MR4509094} in order to find matrices $S_1 \in \Q^{n \times s}$ and $T_1 \in \Q^{s \times m}$ such that $W_1=S_1 T_1.$
    \item If $\rank(T_1AS_1)=s$, then compute $X$ as: $ X=S_1(T_1AS_1)^{-1}T_1.$
       \end{enumerate}}}
\end{algorithm}
\begin{algorithm}[H]
{\small{
	\caption{Computation of $A^{(2)}_{r,\left(W_1, W_1\right)}$ using complex representation of quaternion matrices}\label{alg4}
	\textbf{Input:} $A=A_1+A_2\j \in \Q_{\nu}^{m \times n}$ and $W_1=W_{11}+W_{12}\j \in \Q^{n \times m}$ such that $\dim\left(\mathcal{R}_r\left(W_1\right)\right) =s \leq \nu$ and $\dim\left(\mathcal{N}_r\left(W_1\right)\right) =m-s$. \\
	\textbf{Output:} $X=X_1+X_2\j=A^{(2)}_{r,\left(W_1, W_1\right)} \in \Q^{n \times m}$.
	\begin{enumerate}[noitemsep]
		 \item Compute a full rank factorization of $W_1 = W_{11} + W_{12}\j$ using complex structure preserving algorithm \cite[Algorithm~2]{MR4509094}, obtaining $W_1 = S_1 T_1$, where $S_1 =S_{11}+S_{12}\j \in \Q^{n \times s}$ and $T_1=T_{11}+T_{12}\j \in \Q^{s \times m}$.
       \item Compute $T_1^C$, $A^C$, $S_1^C$, and $(S_1)^C_r$.
        \item Define $R=T_1AS_1$. Compute $R^C$ as:
                $ R^C = T_1^CA^CS_1^C.$
      \item If $R^C$ is invertible, then compute $X^C_r$ as:
	       $X^C_r=(S_1)^C_r \left(R^C\right)^{-1}T_1^C.$
      
        \item Extract the components of $X$ as:
       $$X_1 = X^C_r(:,1:m), ~ X_2=X^C_r(:, m+1:end).$$
		\end{enumerate}}}
\end{algorithm}
We now summarize the computational complexity of Algorithms~\ref{alg3} and~\ref{alg4}.
\begin{table}[htbp]
\centering
{\small{
\begin{tabular}{l l|l l}
\hline

\multicolumn{2}{c|}{\textbf{Algorithm 3 ($\mathbf{q}$-flops)}} 
& \multicolumn{2}{c}{\textbf{Algorithm 4 ($\mathbf{c}$-flops)}} \\

\hline

Operation & Cost & Operation & Cost \\

\hline

Full rank factorization of $W_1$ 
& $\mathcal{O}(mns)$
& Full rank factorization of $W_1$ 
& $\mathcal{O}(mns)$ \\

Compute $T_1 A S_1 $ 
& $\mathcal{O}(smn + ns^2)$
& Compute $R^C = T_1^C A^C S_1^C $ 
& $\mathcal{O}(smn + ns^2)$ \\

Compute $(T_1 A S_1)^{-1} $ 
& $\mathcal{O}(s^3)$
& Compute $(R^C)^{-1} $ 
& $\mathcal{O}(s^3)$ \\

Compute $X = S_1 (T_1 A S_1)^{-1} T_1$ 
& $\mathcal{O}(nsm + ns^2)$
& Compute $X^C_r = (S_1)^C_r (R^C)^{-1} T_1^C$ 
& $\mathcal{O}(nsm + ns^2)$ \\

\hline

\textbf{Total}  
& $\mathcal{O}\!\left(\begin{aligned}
mns + ns^2\\
+\,s^3
\end{aligned}\right)$
& \textbf{Total}  
& $\mathcal{O}\!\left(\begin{aligned}
mns + ns^2 \\
+\, s^3
\end{aligned}\right)$ \\

\hline
\end{tabular}
}}
\caption{Computational complexity of Algorithm~\ref{alg3} and Algorithm~\ref{alg4}.}
\label{table_comparison_34}
\end{table}

\begin{remark}\label{remt2}
Algorithms~\ref{alg3} and~\ref{alg4} share identical leading-order asymptotic complexity. However, their practical performance may differ due to the underlying arithmetic and factorization strategy. Algorithm~\ref{alg3} relies on direct quaternion arithmetic through QTFM for computing the full rank factorization and subsequent matrix operations, which may be computationally expensive for large-scale problems. In contrast, Algorithm~\ref{alg4} adopts a complex structure preserving formulation, enabling the use of optimized BLAS-based routines and leading to improved computational efficiency in practice.
\end{remark}

\begin{remark}\label{rem4.2}
The procedure for computing \( A^{(2)}_{l,(W_2, W_2)} \), as described in Theorem~\ref{thm3.11}, is fundamentally the same as that used to compute \( A^{(2)}_{r,(W_1, W_1)} \). Therefore, Algorithms~\ref{alg3} and~\ref{alg4} can  easily be adapted to compute \( A^{(2)}_{l,(W_2, W_2)} \). To avoid redundancy, we did not provide separate algorithms for computing \( A^{(2)}_{l,(W_2, W_2)} \).
\end{remark}

The quaternion Moore--Penrose inverse arises as a special case of the constrained generalized inverses as described in Theorem~\ref{thm3.7}. Consequently, the algorithms developed in Algorithms~\ref{alg1}--\ref{alg4}, along with the associated complexity analysis, apply directly to its computation. For clarity, we summarize the leading-order computational complexities of the
four algorithms for computing the quaternion Moore--Penrose inverse
$A^{\dagger}$ of a matrix $A\in\Q^{m\times n}$ with
$\rank(A)=s\le\min(m,n)$ in Table~\ref{table15}.

\begin{table}[ht]
\centering
\begin{tabular}{@{}ll@{}}
\toprule
Method & Cost (leading order) \\
\midrule
Algorithm~\ref{alg1}
& $\mathcal{O}(m^2n+mn^2)$ $\mathbf{q}$-flops \\

Algorithm~\ref{alg2}
& $\mathcal{O}(m^2n+mn^2)$ $\mathbf{c}$-flops \\

Algorithm~\ref{alg3}
& $\mathcal{O}(mns+ns^2+s^3)$ $\mathbf{q}$-flops \\

Algorithm~\ref{alg4}
& $\mathcal{O}(mns+ns^2+s^3)$ $\mathbf{c}$-flops \\
\bottomrule
\end{tabular}
\caption{Leading-order computational complexity for computing the quaternion
Moore--Penrose inverse.}
\label{table15}
\end{table}

\begin{remark}\label{rem:mp_complexity}
Table~\ref{table15} highlights the distinct computational scaling behaviors of
the proposed algorithms for computing the quaternion Moore--Penrose inverse.
The SVD-based Algorithms~\ref{alg1} and~\ref{alg2} incur a dominant cost of
$\mathcal{O}(m^2n+mn^2)$, which is independent of the numerical rank
$s=\rank(A)$. In contrast, the full rank factorization--based Algorithms~\ref{alg3} and~\ref{alg4}
scale as $\mathcal{O}(mns+ns^2+s^3)$ and are thus computationally preferable
when $A$ is large but of relatively low rank.
Moreover, the complex structure preserving Algorithms~\ref{alg2} and~\ref{alg4}
typically outperform their quaternion-domain counterparts in practice, owing
to the use of optimized complex BLAS routines.
\end{remark}

\subsection{Numerical examples}
In this subsection, we use MATLAB to perform example computations that demonstrate the proposed algorithms. The following example compares the direct QTFM-based method with the complex representation (CR) method for computing the right and left outer generalized inverses with prescribed subspace conditions. 
\begin{example}
We generate random quaternion matrices as follows:
$$A = A_1 + A_2 \j \in \Q^{m \times n},  ~ 
S_1 = S_{11} + S_{12} \j \in \Q^{n \times p},  ~
T_1 = T_{11} + T_{12} \j \in \Q^{q \times m}, ~ 
W_2 = W_{21} + W_{22} \j \in \Q^{n \times m},$$
where, for \(i=1,2\), the complex components \(A_i\), \(S_{1i}\), \(T_{1i}\), and \(W_{2i}\) are generated using \texttt{rand} function.
The matrix dimensions depend on a positive integer \( k \), defined as:
 $m = 3k, n = 2k, p = k, q = k.$ In this example, \( k \) is varied from 5 to $100$ in increments of $5$.  The objective of this example is to compute the right and left generalized inverses of \( A \), denoted as: $A^{(2)}_{r,(S_1, T_1)}, ~  A^{(2)}_{l,(W_2, W_2)}.$ 
 
 For computing the generalized inverses \(A^{(2)}_{r,(S_1, T_1)}\) and \(A^{(2)}_{l,(W_2, W_2)}\), we consider two approaches: direct computation via QTFM (Algorithm~\ref{alg1} and Algorithm \ref{alg3} together with Remark~\ref{rem4.2}) and the complex representation (CR) method (Algorithm~\ref{alg2} and Algorithm \ref{alg4} together with Remark~\ref{rem4.2}).

To evaluate the accuracy and computational efficiency of these methods, let \( X_q^r \) and \( X_c^r \) denote the computed values of \( A^{(2)}_{r,(S_1, T_1)} \). Similarly, let \( X_q^l \) and \( X_c^l \) denote the computed values of \( A^{(2)}_{l,(W_2, W_2)} \). The accuracy of these computations is assessed using the following error metrics:
\begin{eqnarray*}
    \epsilon_q^r = \norm{X_q^r A X_q^r - X_q^r}_F, ~ \epsilon_c^r = \norm{X_c^r A X_c^r - X_c^r}_F, ~ \epsilon_q^l = \norm{X_q^l A X_q^l - X_q^l}_F, ~  \epsilon_c^l = \norm{X_c^l A X_c^l - X_c^l}_F. 
\end{eqnarray*}
Each experiment is run for $50$ trials to ensure the reliability of the results, and the average CPU time is computed. Let \( t_q^r \) and \( t_c^r \) represent the average CPU times required for computing \( X_q^r \) and \( X_c^r \), respectively. Similarly, let \( t_q^l \) and \( t_c^l \) represent the average CPU times  required for computing \( X_q^l \) and \( X_c^l \), respectively. The results are summarized in figures \ref{fig1} and \ref{fig2}.
\end{example}

\begin{figure}[hbt!]
     \centering
     \begin{subfigure}[b]{0.35\textwidth}
         \centering
         \includegraphics[width=\textwidth]{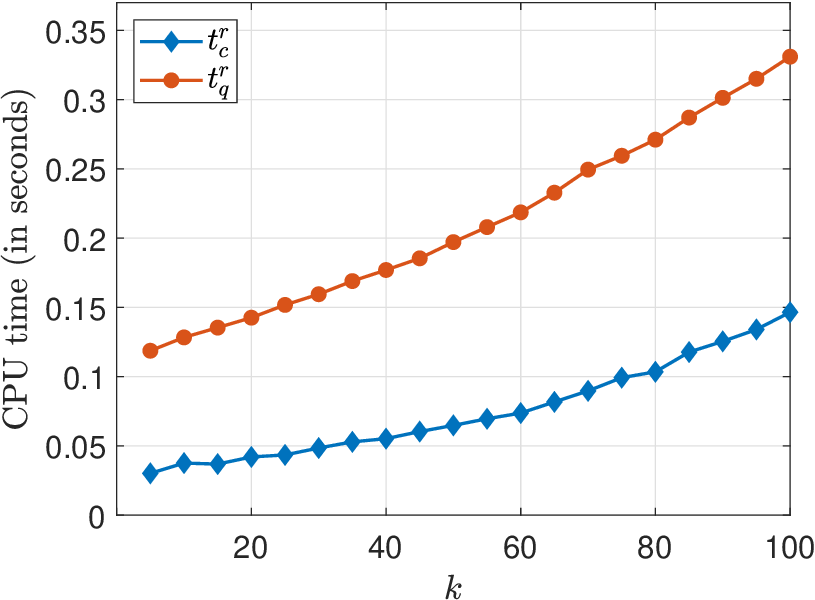}
         \caption{CPU time comparison.}
         \label{fig1a}
     \end{subfigure}
     \begin{subfigure}[b]{0.35\textwidth}
         \centering
         \includegraphics[width=\textwidth]{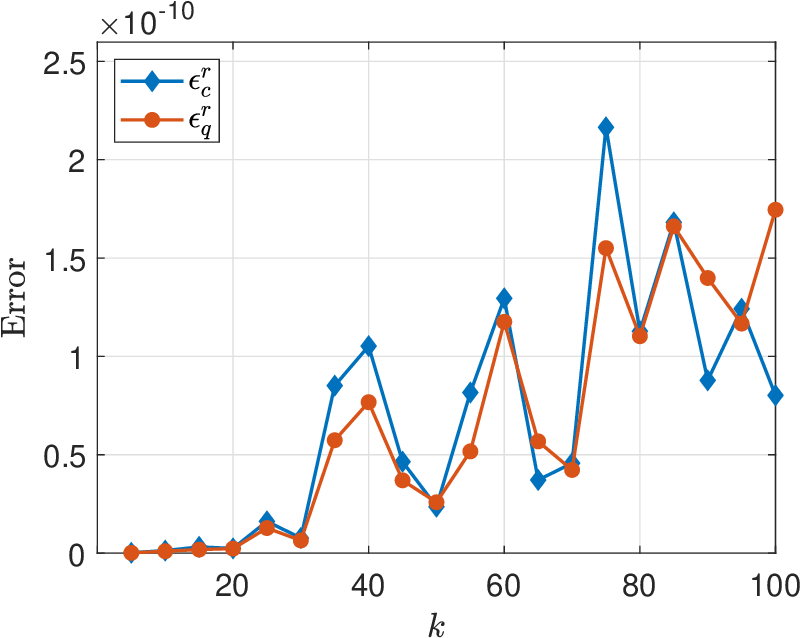}
         \caption{Error comparison.}
         \label{fig1b}
     \end{subfigure}
        \caption{Comparison of CPU time and error for computing  $A^{(2)}_{r,(S_1, T_1)}$ using two methods: direct computation in QTFM and the CR method.}
        \label{fig1}
\end{figure}
For the right generalized inverse \( A^{(2)}_{r,(S_1, T_1)} \), Figure \ref{fig1a} demonstrates that the CR method is consistently faster than the QTFM-based method, which is in line with the discussion in Remark~\ref{remt1}. Figure \ref{fig1b} shows that both methods achieve error values below \( 10^{-10} \), indicating high accuracy.

\begin{figure}[hbt!]
     \centering
     \begin{subfigure}[b]{0.35\textwidth}
         \centering
         \includegraphics[width=\textwidth]{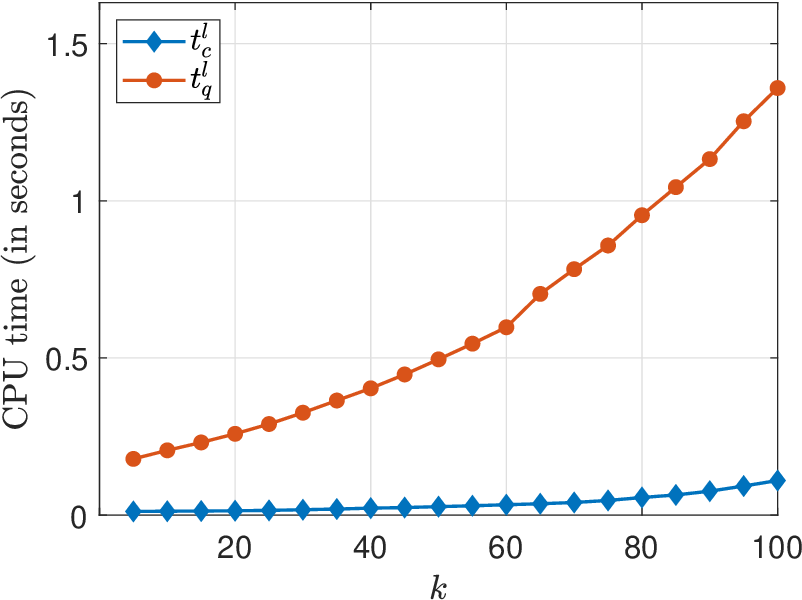}
         \caption{CPU time comparison.}
         \label{fig2a}
     \end{subfigure}
     \begin{subfigure}[b]{0.35\textwidth}
         \centering
         \includegraphics[width=\textwidth]{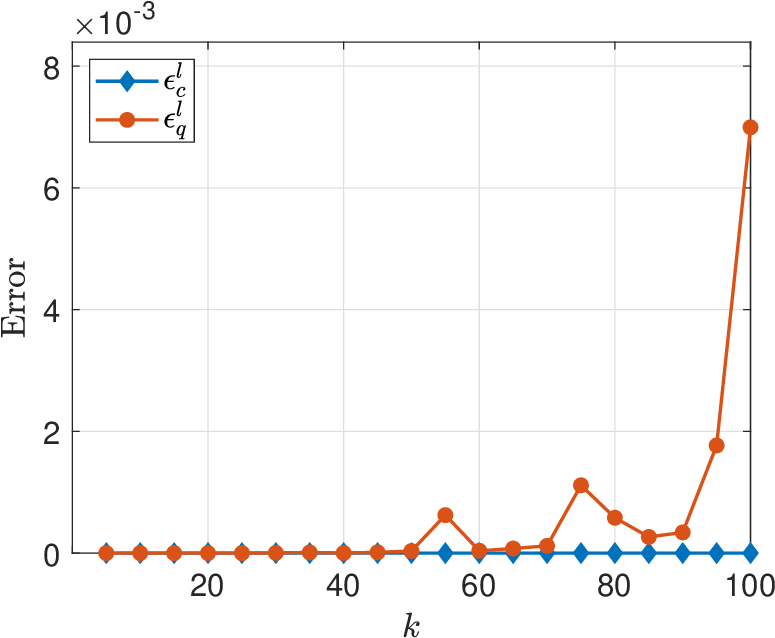}
         \caption{Error comparison.}
         \label{fig2b}
     \end{subfigure}
        \caption{Comparison of CPU time and error for computing  $A^{(2)}_{l,(W_2, W_2)}$ using full rank decomposition by two methods: direct computation in QTFM and the CR method.}
        \label{fig2}
\end{figure}

 For the left generalized inverse \( A^{(2)}_{l,(W_2, W_2)} \), Figure \ref{fig2a} reveals that the CR method outperforms the QTFM-based method in terms of computational speed, which is consistent with the discussion in 
Remark~\ref{remt2}. Figure \ref{fig2b} confirms that the errors remain below \( 10^{-3} \), validating the effectiveness of both methods.

In conclusion, both the QTFM and CR methods produce highly accurate solutions. However, the CR method is significantly faster, making it the preferred choice for large matrices.

\begin{remark}
To the best of our knowledge, the existing literature on generalized inverses over the quaternion skew field, such as \cite{MR2775924}, where determinantal representations and Cramer-type rules were derived, focuses primarily on theoretical characterizations via row and column determinants. In contrast, the present work addresses both theoretical representations and computational techniques for quaternion generalized inverses, including Urquhart-type and full rank decomposition-based formulations, together with practical algorithms. 
\end{remark}

To evaluate the computational efficiency and numerical accuracy of the proposed algorithms for computing the quaternion Moore--Penrose pseudoinverse, we present a numerical example based on randomly generated quaternion matrices. 
\begin{example}
Let $A = A_1 + A_2 \j \in \Q^{m \times n}$,
where the complex matrices $A_1, A_2 \in \mathbb{C}^{m \times n}$ are generated randomly. The matrix dimensions are parameterized by a positive integer $k$ and chosen as $m = 3k$ and $n = 2k$, with $k$ varying from $20$ to $100$ in steps of $20$.

Algorithms~\ref{alg1}–\ref{alg4} are used to compute the Moore--Penrose pseudoinverse, and their performance is evaluated in terms of computational efficiency and numerical accuracy. For each method, let $X$ denote the resulting pseudoinverse. Numerical accuracy is assessed by computing the residuals associated with the four Moore--Penrose conditions.
For each value of $k$, all four algorithms are applied to the same randomly generated quaternion matrix to ensure a consistent comparison. Each experiment is repeated $50$ times, and the reported CPU times are averaged to mitigate the effect of timing variability. The numerical results are summarized in Tables~\ref{tab11}, \ref{tab12}, and \ref{tab13}.
\end{example}

\begin{table}[h]
\centering
\begin{tabular}{c cccc}
\toprule
$k$ & Alg.~\ref{alg1} & Alg.~\ref{alg2} & Alg.~\ref{alg3} & Alg.~\ref{alg4} \\
\midrule
$20$  & $1.538e{-01}$ & $6.154e{-02}$ & $2.512e{-01}$ & $1.148e{-02}$ \\
$40$  & $2.521e{-01}$ & $1.360e{-01}$ & $3.819e{-01}$ & $1.727e{-02}$ \\
$60$  & $4.787e{-01}$ & $3.222e{-01}$ & $5.514e{-01}$ & $2.617e{-02}$ \\
$80$  & $7.889e{-01}$ & $5.562e{-01}$ & $8.831e{-01}$ & $4.630e{-02}$ \\
$100$ & $1.210e{+00}$ & $1.000e{+00}$ & $1.293e{+00}$ & $9.011e{-02}$ \\
\bottomrule
\end{tabular}
\caption{CPU time (seconds) for computing the quaternion pseudoinverse.}
\label{tab11}
\end{table}

\begin{table}[h]
\centering
\begin{tabular}{ccccccccc}
\toprule
 & \multicolumn{4}{c}{$\|AXA - A\|_F$}
 & \multicolumn{4}{c}{$\|XAX-X\|_F$} \\
\cmidrule(lr){2-5}\cmidrule(lr){6-9}
$k$
& Alg.~\ref{alg1} & Alg.~\ref{alg2} & Alg.~\ref{alg3} & Alg.~\ref{alg4}
& Alg.~\ref{alg1} & Alg.~\ref{alg2} & Alg.~\ref{alg3} & Alg.~\ref{alg4} \\
\midrule
$20$  & $2.44e{-11}$ & $2.04e{-11}$ & $2.67e{-09}$ & $1.63e{-10}$ & $1.47e{-12}$ & $2.24e{-12}$ & $4.67e{-10}$ & $5.28e{-12}$ \\
$40$  & $5.38e{-11}$ & $9.68e{-11}$ & $3.94e{-08}$ & $1.87e{-09}$ & $4.78e{-12}$ & $6.55e{-12}$ & $3.71e{-09}$ & $3.51e{-11}$ \\
$60$  & $2.05e{-10}$ & $8.45e{-10}$ & $1.01e{-06}$ & $1.01e{-08}$ & $9.79e{-12}$ & $1.14e{-11}$ & $6.45e{-07}$ & $9.35e{-11}$ \\
$80$  & $3.12e{-10}$ & $3.23e{-10}$ & $5.05e{-06}$ & $3.11e{-08}$ & $1.44e{-11}$ & $9.88e{-12}$ & $2.92e{-07}$ & $1.89e{-10}$ \\
$100$ & $6.17e{-10}$ & $5.89e{-10}$ & $3.17e{-05}$ & $6.99e{-08}$ & $2.21e{-11}$ & $1.33e{-11}$ & $1.31e{-06}$ & $2.83e{-10}$ \\
\bottomrule
\end{tabular}
\caption{Errors associated with the computed quaternion pseudoinverses.}
\label{tab12}
\end{table}

\begin{table}[h]
\centering
\begin{tabular}{ccccccccc}
\toprule
 & \multicolumn{4}{c}{$\|(AX)^* - AX\|_F$}
 & \multicolumn{4}{c}{$\|(XA)^* - XA\|_F$} \\
\cmidrule(lr){2-5}\cmidrule(lr){6-9}
$k$
& Alg.~\ref{alg1} & Alg.~\ref{alg2} & Alg.~\ref{alg3} & Alg.~\ref{alg4}
& Alg.~\ref{alg1} & Alg.~\ref{alg2} & Alg.~\ref{alg3} & Alg.~\ref{alg4} \\
\midrule
$20$  & $3.80e{-12}$ & $5.18e{-12}$ & $1.50e{-09}$ & $4.45e{-11}$ & $3.98e{-12}$ & $5.20e{-12}$ & $1.67e{-09}$ & $2.28e{-11}$ \\
$40$  & $1.50e{-11}$ & $2.20e{-11}$ & $1.49e{-08}$ & $3.91e{-10}$ & $1.53e{-11}$ & $2.13e{-11}$ & $1.71e{-08}$ & $2.22e{-10}$ \\
$60$  & $3.69e{-11}$ & $4.36e{-11}$ & $3.40e{-06}$ & $1.71e{-09}$ & $3.74e{-11}$ & $4.37e{-11}$ & $3.26e{-06}$ & $7.21e{-10}$ \\
$80$  & $6.31e{-11}$ & $4.53e{-11}$ & $1.74e{-06}$ & $4.42e{-09}$ & $6.31e{-11}$ & $4.40e{-11}$ & $1.69e{-06}$ & $2.08e{-09}$ \\
$100$ & $9.96e{-11}$ & $6.40e{-11}$ & $9.01e{-06}$ & $9.45e{-09}$ & $1.02e{-10}$ & $6.50e{-11}$ & $9.18e{-06}$ & $3.55e{-09}$ \\
\bottomrule
\end{tabular}
\caption{Errors associated with the computed quaternion pseudoinverses.}
\label{tab13}
\end{table}

The numerical results in Tables~\ref{tab12} and \ref{tab13} show that all four algorithms compute pseudoinverses that satisfy the Moore--Penrose conditions to high accuracy, with residuals close to machine precision across all tested matrix sizes. Table~\ref{tab11} further indicates that the complex structure preserving methods (Algorithms~\ref{alg2} and~\ref{alg4}) consistently require less CPU time than the direct quaternion-domain approaches as the problem size increases. Moreover, the full rank factorization–based method (Algorithm \ref{alg4}) demonstrate additional efficiency gains for larger matrices. Overall, this example confirms that the proposed algorithms are numerically reliable for computing the quaternion Moore--Penrose pseudoinverse.

\section{Applications}\label{sec7}
In this section, we demonstrate the practical applicability of the proposed algorithms through two distinct real-world scenarios: color image deblurring and three-dimensional signal filtering. These applications highlight the effectiveness of quaternion-based formulations for handling multidimensional data. We first evaluate the efficacy of the proposed methods on a color image deblurring problem. 

\subsection{Color image deblurring}
It is well known that the restoration of degraded color images is a significant challenge in digital image processing. Traditional approaches often process each color channel independently, thereby neglecting the intrinsic correlations between the RGB components. Our quaternion-based formulation addresses this limitation by holistically treating the color image as a quaternion-valued signal. The experimental framework is designed to assess both the computational efficiency and the restoration quality of the recovered images. Color images are typically represented using three channels: red, green, and blue (RGB). In quaternion-based models, these channels are combined into a single quaternion-valued matrix. Specifically, a color image can be expressed as:
$X = X_R \i + X_G \j + X_B \k$, where $X_R, X_G, X_B \in \mathbb{R}^{m \times n}$ denote the individual RGB channels.
This representation allows for holistic processing of color images by preserving the interrelationships between channels. The problem of restoring color images can be formulated as solving the following quaternion linear system:
\begin{equation}\label{eqir}
AX = B,
\end{equation}
where \( A \) is the blurring matrix, \( X \) is the original image to be restored and \( B \) is the observed blurred and noise-free image. The goal is to recover \( X \) from \( B \). To demonstrate the effectiveness of the quaternion-based image restoration method, we consider four original color images with different dimensions: (1) Parrot with an image size $(512 \times 512 )$, (2) Ship with an image size $( 512 \times 768 )$, (3) Scenery with an image size $(2400 \times 2400 )$, and (4) Elephant with an image size $(2482 \times 3374 )$.

These images are blurred using a multichannel blurring matrix represented as \cite{MR4861347}:
$$A = A_1 \i + A_2 \j + A_3 \k \in \Q^{m \times m}.$$
Define \( A_1 = T_0 \otimes T_1 \in \mathbb{R}^{pq \times pq} \), \( A_2 = -0.5 A_1\), and \(A_3 = -0.5 A_1 \). Here, \( T_0=(t^0_{ij}) \in \mathbb{R}^{p \times p} \) and \( T_1=(t^1_{ij}) \in \mathbb{R}^{q \times q} \) are Toeplitz matrices defined as:
\[
t^0_{ij} =
\begin{cases}
    \frac{1}{\sigma \sqrt{2\pi}} \exp{\left(-\frac{(i-j)^2}{2\sigma^2}\right)}, & |i-j| \leq r,\\
    0, & \text{otherwise}.
\end{cases},
~
t^1_{ij} =
\begin{cases}
    \frac{1}{2s-1}, & |i-j| \leq s,\\
    0, & \text{otherwise}.
\end{cases}.
\]
The parameters are set to $\sigma = 3$, $r = 3$, and $s = 3$, with $p=32$ and $q=16$ for the Parrot and Ship images, $p=75$ and $q=32$ for the Scenery image, and $p=34$ and $q=73$ for the Elephant image.


The blurred image \( B \) was generated by computing \( A X \), where \( X \) denotes the original image. 
The true image was reconstructed using the least squares solution \( A^{\dagger} B \), where the Moore--Penrose inverse was computed using Algorithm~\ref{alg1} with \( S_1 = T_1 = A^* \). The visual results for all test images are presented in Figure~\ref{fig:deblurring_results_all}.  

The quality of the restored images is assessed using three metrics:
\begin{itemize}[noitemsep]
    \item Peak Signal-to-Noise Ratio (PSNR): Measures reconstruction quality in decibels (dB).
    \item Structural Similarity Index (SSIM): Assesses the structural similarity between the original and restored images.
    \item Relative Residual (RR): Evaluates the relative error between the original and restored images.
\end{itemize}

The numerical results and visual comparisons of the deblurred images are presented in Table~\ref{tab1} and Figure~\ref{fig:deblurring_results_all}, respectively. These results demonstrate that the quaternion-based method effectively restores color images blurred by the multichannel matrices.

\begin{table}[htbp]
    \centering
    \begin{tabular}{c c c c  c } 
        \toprule
        \textbf{Image} & \textbf{Size}  & \textbf{PSNR} & \textbf{SSIM}  & \textbf{RR} \\
        \midrule
        \text{Parrot} & $512 \times 512$  & $39.10$ & $0.9699$ &  $2.4189 \times 10^{-2} $ \\
        \text{Ship} & $512 \times 768$  & $34.07$ & $0.9378$&  $3.5550 \times 10^{-2} $ \\
        \text{Scenery} & $2400 \times 2400$  & $48.71$ & $0.9909$ &  $7.3889 \times 10^{-3} $ \\
        \text{Elephant} & $2482 \times 3374$  & $43.79$ & $0.9859$ &  $1.7329 \times 10^{-2} $ \\
        \bottomrule
    \end{tabular}
    \caption{Quantitative restoration results for quaternion-based color image deblurring under multichannel blur.}
    \label{tab1}
\end{table}

In \eqref{eqir}, the image restoration is carried out in the quaternion domain, where the color image is treated as a single quaternion-valued signal rather than three independent channels. An equivalent real-valued block formulation is obtained by explicitly separating the RGB components.
Let $B = B_R \i + B_G \j + B_B \k$. By equating the $\i$-, $\j$-, and $\k$-components in \(AX = B\) and using the quaternion multiplication rules, we obtain the coupled real-valued system
$B_R = -A_3 X_G + A_2 X_B$, $B_G = \phantom{-}A_3 X_R - A_1 X_B$, and
$B_B = -A_2 X_R + A_1 X_G$.
This can be written compactly as
\[
\underbrace{
\begin{bmatrix}
0 & -A_3 & \phantom{-}A_2 \\
A_3 & 0 & -A_1 \\
-A_2 & A_1 & 0
\end{bmatrix}
}_{\mathcal{A}_{\mathrm{R}}}
\underbrace{\begin{bmatrix}
X_R \\ X_G \\ X_B
\end{bmatrix}}_{\mathcal{X}_{\mathrm{R}}}
=
\underbrace{\begin{bmatrix}
B_R \\ B_G \\ B_B
\end{bmatrix}}_{\mathcal{B}_{\mathrm{R}}},
\]
so that the restored image in the real-valued framework is given by $\mathcal{X}_{\mathrm{R}}=\mathcal{A}_{\mathrm{R}}^{\dagger}\mathcal{B}_{\mathrm{R}}$.

This derivation shows that the quaternion formulation and the real block formulation are algebraically equivalent representations of the same linear system. The essential difference lies in the modeling structure. In the quaternion framework, the RGB channels are handled as a unified entity, inherently preserving their mutual relationships. In contrast, the real block formulation processes the channels as separate components within a coupled real-valued system. Natural color images exhibit strong statistical correlations among RGB channels. Therefore, an effective restoration method should preserve not only the accuracy but also these inter-channel dependencies.

To quantify the preservation of inter-channel relationships, we compute the $3\times3$ Pearson correlation matrices of the RGB channels over all image pixels. Specifically, each channel is vectorized and the correlation coefficients are computed using Pearson's sample correlation coefficient. We compare these matrices for the original image ($\mathbf{C}_{\mathrm{orig}}$), the quaternion-restored image ($\mathbf{C}_{\mathrm{quat}}$), and the real block restored image ($\mathbf{C}_{\mathrm{real}}$). For the Parrot image, the correlation matrices are

\begin{equation*}
\mathbf{C}_{\mathrm{orig}} =
\begin{bmatrix}
1 & 0.6142 & 0.4227 \\
0.6142 & 1 & 0.6407 \\
0.4227 & 0.6407 & 1
\end{bmatrix}, ~
\mathbf{C}_{\mathrm{quat}} =
\begin{bmatrix}
1 & 0.6129 & 0.4230 \\
0.6129 & 1 & 0.6407 \\
0.4230 & 0.6407 & 1
\end{bmatrix}, ~
\mathbf{C}_{\mathrm{real}} =
\begin{bmatrix}
1 & 0.9010 & 0.8952 \\
0.9010 & 1 & 0.6135 \\
0.8952 & 0.6135 & 1
\end{bmatrix}.
\end{equation*}
For the Ship image:
\begin{equation*}
    \mathbf{C}_{\text{orig}} =
\begin{bmatrix}
1 & 0.9772 & 0.9603 \\
0.9772 & 1 & 0.9911 \\
0.9603 & 0.9911 & 1
\end{bmatrix}, ~
\mathbf{C}_{\text{quat}} =
\begin{bmatrix}
1 & 0.9771 & 0.9603 \\
0.9771 & 1 & 0.9911 \\
0.9603 & 0.9911 & 1
\end{bmatrix}, ~ \mathbf{C}_{\text{real}} =
\begin{bmatrix}
1 & 0.9980 & 0.9979 \\
0.9980 & 1 & 0.9925 \\
0.9979 & 0.9925 & 1
\end{bmatrix}.
\end{equation*}
For the Scenery image: 
\begin{equation*}
\mathbf{C}_{\mathrm{orig}} =
\begin{bmatrix}
1 & 0.8909 & 0.6705 \\
0.8909 & 1 & 0.8988 \\
0.6705 & 0.8988 & 1
\end{bmatrix},~
\mathbf{C}_{\mathrm{quat}} =
\begin{bmatrix}
1 & 0.8909 & 0.6704 \\
0.8909 & 1 & 0.8987 \\
0.6704 & 0.8987 & 1
\end{bmatrix}, ~
\mathbf{C}_{\mathrm{real}} =
\begin{bmatrix}
1 & 0.9845 & 0.9743 \\
0.9845 & 1 & 0.9199 \\
0.9743 & 0.9199 & 1
\end{bmatrix}.
\end{equation*}
For the Elephant image:
\begin{equation*}
   \mathbf{C}_{\mathrm{orig}} =
\begin{bmatrix}
1 & 0.9722 & 0.9137 \\
0.9722 & 1 & 0.9775 \\
0.9137 & 0.9775 & 1
\end{bmatrix}, ~
\mathbf{C}_{\mathrm{quat}} =
\begin{bmatrix}
1 & 0.9721 & 0.9135 \\
0.9721 & 1 & 0.9773 \\
0.9135 & 0.9773 & 1
\end{bmatrix}, ~
\mathbf{C}_{\mathrm{real}} =
\begin{bmatrix}
1 & 0.9949 & 0.9947 \\
0.9949 & 1 & 0.9793 \\
0.9947 & 0.9793 & 1
\end{bmatrix}.
\end{equation*}

The visual comparisons in Figure~\ref{fig:deblurring_results_all} support these observations. For clarity of presentation, we display representative results for the Parrot and Ship images, while the quantitative analysis is conducted on all test images.  The quaternion-restored images preserve color consistency and overall visual coherence. In contrast, the real block restorations exhibit noticeable color shifts and artificial channel-mixing effects. This visual behavior is consistent with the correlation analysis.
For all four images, $\mathbf{C}_{\mathrm{quat}}$ is nearly identical to $\mathbf{C}_{\mathrm{orig}}$, indicating that the original inter-channel relationships are effectively preserved under the quaternion formulation. 
In contrast, $\mathbf{C}_{\mathrm{real}}$ shows inflated off-diagonal entries in several cases, reflecting spurious cross-channel coupling that aligns with the observed color distortions.

Overall, the experiments demonstrate that the quaternion formulation achieves high numerical restoration accuracy while preserving the intrinsic RGB correlation structure. This makes the quaternion framework a more suitable and structurally consistent model for color image deblurring than the corresponding real block representation.

\begin{figure}[H]
\centering
\begin{tabular}{cccc}

\includegraphics[width=0.18\textwidth]{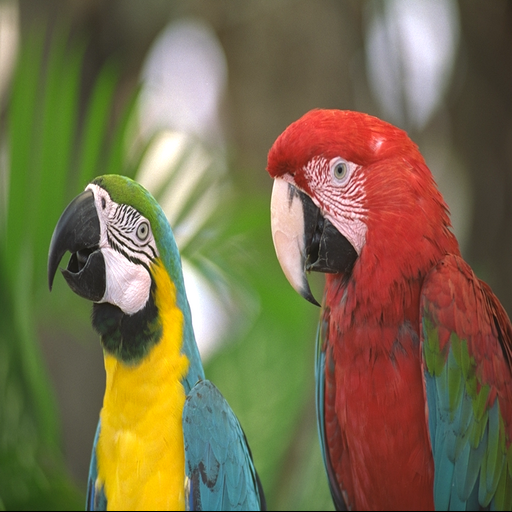} &
\includegraphics[width=0.18\textwidth]{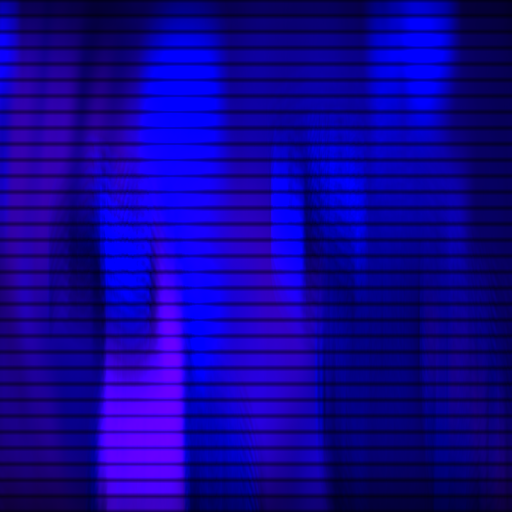} &
\includegraphics[width=0.18\textwidth]{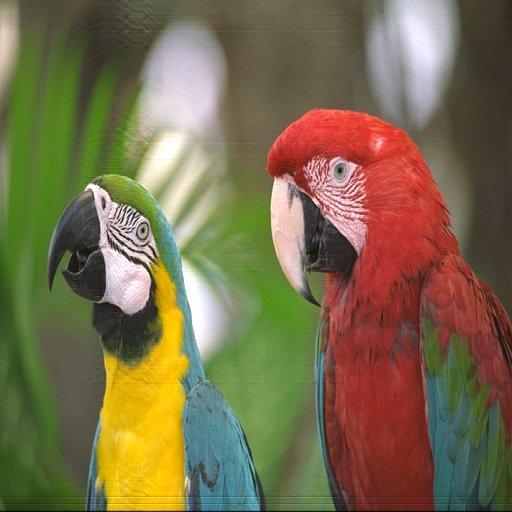} &
\includegraphics[width=0.18\textwidth]{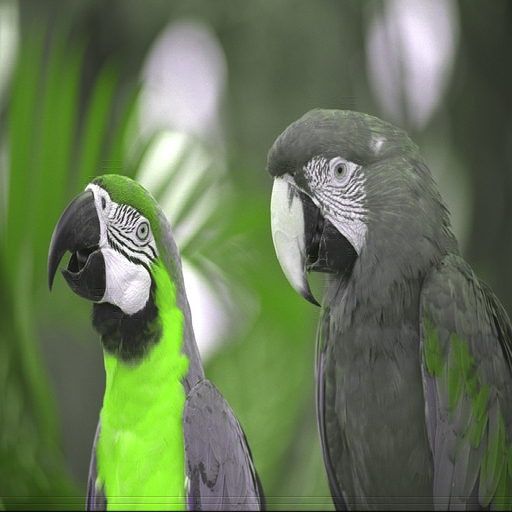} \\[0.2cm]

\includegraphics[width=0.18\textwidth]{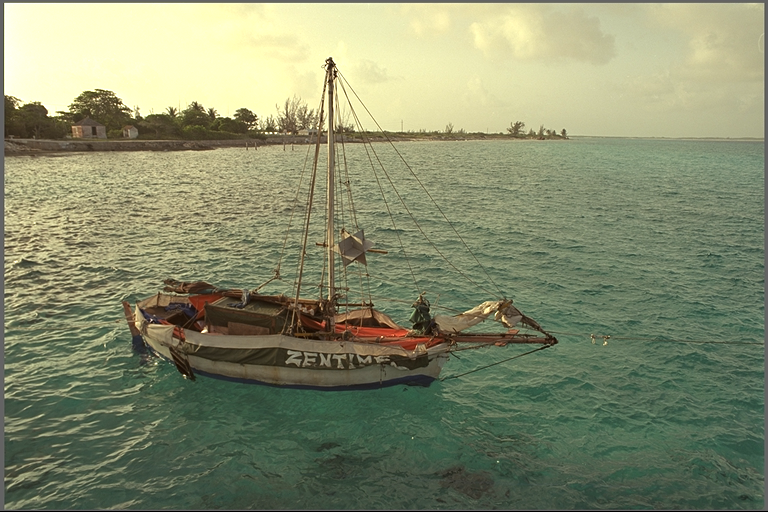} &
\includegraphics[width=0.18\textwidth]{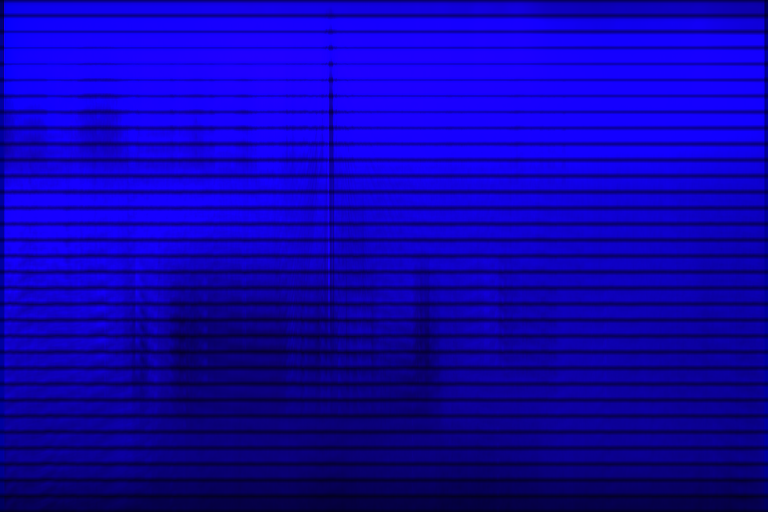} &
\includegraphics[width=0.18\textwidth]{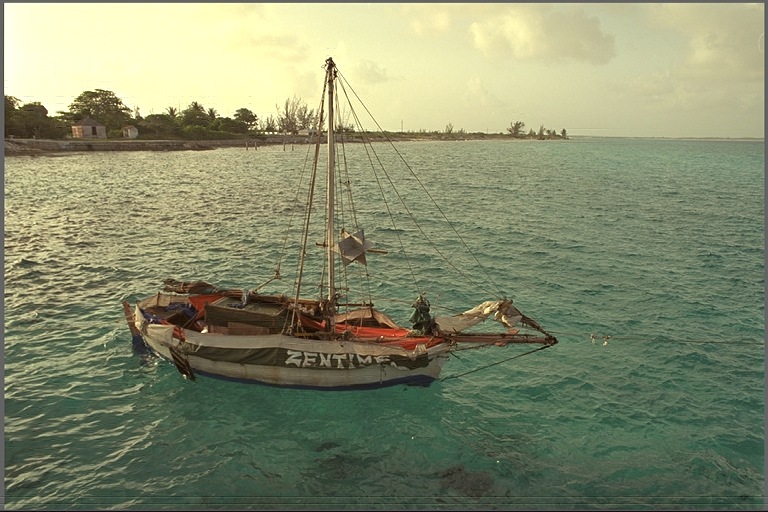} &
\includegraphics[width=0.18\textwidth]{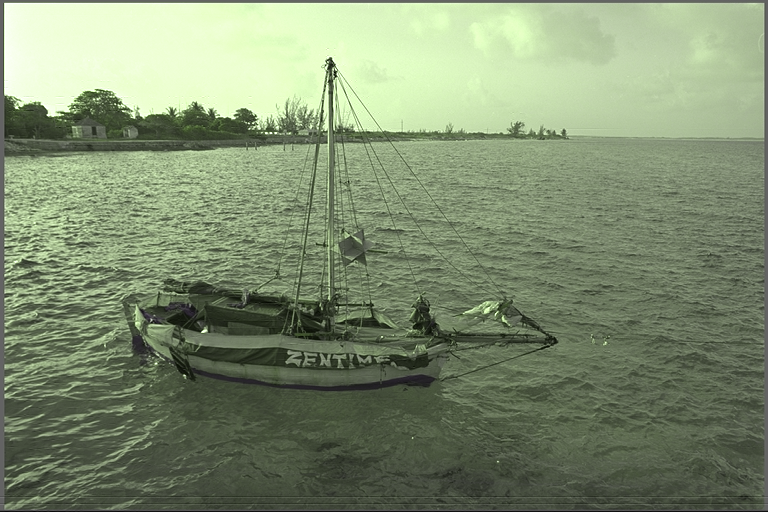} \\[0.2cm]



Original image &
Blurred image &
\shortstack{Deblurred image\\(Quaternion)} &
\shortstack{Deblurred image\\(Real)}

\end{tabular}

\caption{Deblurring results for four test images (Parrot and Ship). Each row corresponds to a different image, while the columns represent the original image, the blurred observation, the restored image using the quaternion formulation, and the restored image using the real-valued formulation.}
\label{fig:deblurring_results_all}
\end{figure}

\subsection{Three-dimensional signal filtering}
The proposed quaternion-based algorithm addresses key challenges in real-world applications, particularly in handling chaotic signals such as those generated by the Lorenz system, where high dimensionality and nonlinear dynamics require robust filtering techniques to recover and interpret meaningful signal patterns.

We consider the following example to showcase the effectiveness of quaternion-valued filtering in recovering signals distorted by delay and noise, thus illustrating the versatility of the proposed approach in processing multidimensional time series data. The Lorenz system is governed by the following set of coupled ordinary differential equations \cite{Strogatz2015}:
\begin{equation}
\frac{dx}{dt} = \alpha (y - x), ~
\frac{dy}{dt} = x(\rho - z) - y, ~
\frac{dz}{dt} = xy - \beta z.
\label{eq10.1}
\end{equation}
where \(\alpha\), \(\beta\), and \(\rho\) are strictly positive parameters. For classical chaotic behavior, the standard values \(\alpha = 10\), \(\beta = \tfrac{8}{3}\), and \(\rho = 28\) are employed. The system is simulated over the interval \(t \in [0,T]\) with sampling step \(\Delta t\), starting from the initial condition \((x(0),y(0),z(0)) = (1,1,1)\). We define a three-dimensional quaternion-valued signal as
$\mathbf{a}(t) = a_r(t) \i + a_g(t) \j + a_b(t) \k,$ where $a_r(t), a_g(t), a_b(t) \in \mathbb{R}$ are the real functions. Suppose that the solutions to \eqref{eq10.1} are $d_r(t)$, $d_g(t)$, and $d_b(t)$. Then, the desired output signal is given by
\[
\mathbf{d}(t) = d_r(t) \i + d_g(t) \j + d_b(t) \k.
\]
To generate the input signal, we introduce a delay and a noise component:
\[
\mathbf{c}(t) = d_r(t - 1) \i + d_g(t - 1) \j + d_b(t - 1) \k + \mathbf{n}(t),
\]
where $\mathbf{n}(t)$ represents purely imaginary quaternion-valued random noise. We aim to determine a set of quaternion-valued filter coefficients $\{\mathbf{f}(k)\}_{k=0}^{n}$, where $\mathbf{f}(k)=f(k)_0+f(k)_r \i + f(k)_g \j + f(k)_b \k$   such that the convolution of the input signal $\mathbf{c}(t)$ with the filter approximates the target output signal $\mathbf{d}(t)$:
\begin{equation}
\mathbf{d}(t) = \sum_{k=0}^{n} \mathbf{c}(t - k) * \mathbf{f}(k).
\label{eq10.2}
\end{equation}
To express the above equation \label{eq10.2} in matrix form, we define the quaternion-valued data matrix, filter vector, and output vector as follows:
\begin{eqnarray*}
\mathbf{C} = 
\begin{bmatrix}
\mathbf{c}(t) & \mathbf{c}(t-1) & \cdots & \mathbf{c}(t-n) \\
\mathbf{c}(t+1) & \mathbf{c}(t) & \cdots & \mathbf{c}(t - n + 1) \\
\vdots & \vdots & \ddots & \vdots \\
\mathbf{c}(t + n) & \mathbf{c}(t + n - 1) & \cdots & \mathbf{c}(t )
\end{bmatrix}, 
~~~\mathbf{f} = \begin{bmatrix}
\mathbf{f}(0)\\\mathbf{f}(1)\\ \vdots \\\mathbf{f}(n)
\end{bmatrix},
~~~\mathbf{d} = \begin{bmatrix}
\mathbf{d}(t)\\ \mathbf{d}(t + 1) \\\vdots \\ \mathbf{d}(t + n)
\end{bmatrix}. 
\end{eqnarray*}
We now express the quaternion-valued linear system as 
\begin{equation}
\mathbf{C}  \mathbf{f} = \mathbf{d},
\label{eq10.3}
\end{equation}
where \(\mathbf{C} \in \Q^{(n+1) \times (n+1)}\) is the data matrix, \(\mathbf{f} \in \Q^{(n+1) \times 1}\) is the filter coefficient vector, and \(\mathbf{d} \in \Q^{(n+1) \times 1}\) is the target signal vector. To obtain the optimal filter coefficient \(\mathbf{f}\), we compute the quaternion pseudoinverse solution \(\mathbf{f} = \mathbf{C}^{\dagger}\mathbf{d}\), where \(\mathbf{C}^{\dagger}\) is evaluated using Algorithm~\ref{alg2}. Once the filter is applied, the estimated output signal is given by \(\hat{\mathbf{d}} = \mathbf{C}\mathbf{f}\). The estimation error metric is defined as the relative reconstruction error
\[
\mathbf{e}
=
\frac{\|\hat{\mathbf{d}} - \mathbf{d}\|}{\|\mathbf{d}\|},
\]
that quantifies the discrepancy between the predicted and actual target signals. This quantity serves as a performance metric for evaluating the effectiveness of the filtering process. 

The visual results reported in Figure~\ref{fig:three_combined} correspond to the numerical experiment with parameters $T=50$ and $\Delta t=0.06$. For this case, the resulting quaternion linear system has size
$\mathbf{C}\in\Q^{417\times417}$, and the proposed method yields a relative error of $\mathbf{e}=8.9978\times10^{-11}$. 

\begin{figure}[hbt!]
    \centering
    
    \begin{subfigure}[b]{0.32\textwidth}
        \centering
        \includegraphics[width=\textwidth]{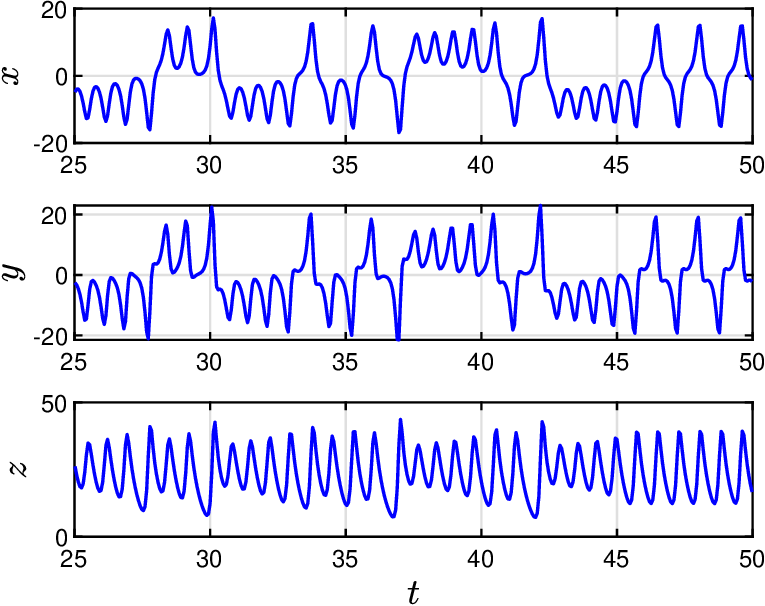}
        \caption{}
        \label{fig11b}
    \end{subfigure}
    \hfill
    \begin{subfigure}[b]{0.32\textwidth}
        \centering
        \includegraphics[width=\textwidth]{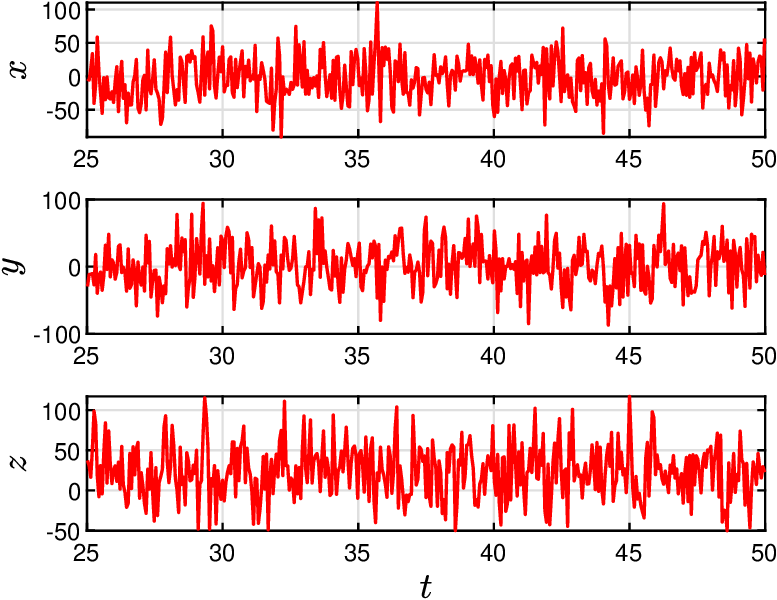}
        \caption{}
        \label{fig12a}
    \end{subfigure}
    \hfill
    \begin{subfigure}[b]{0.32\textwidth}
        \centering
        \includegraphics[width=\textwidth]{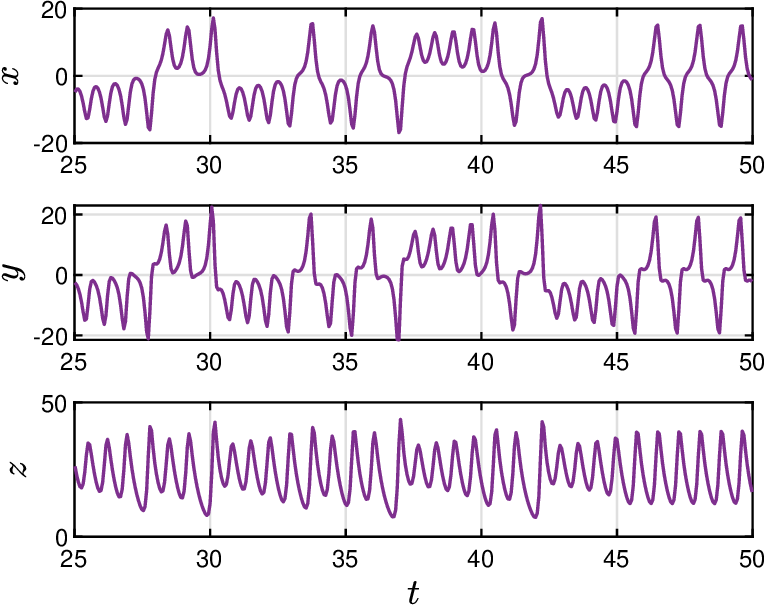}
        \caption{}
        \label{fig12b}
    \end{subfigure}

    \caption{(a) Target signal components, 
    (b) Noisy and delayed signal components, 
    (c) Estimated signal components after filtering.}
    
    \label{fig:three_combined}
\end{figure}

Figure~\ref{fig:three_combined}(a) shows the clean target signal components obtained from the solution of the system. Figure~\ref{fig:three_combined}(b) presents the input signal after introducing a unit time delay and additive Gaussian noise, which distorts the original dynamics and mimics practical signal degradation scenarios. Figure~\ref{fig:three_combined}(c) shows the estimated signal components obtained by applying the learned quaternion filter to the noisy input. 
The close agreement between the estimated and target signals visually confirms the effectiveness of the proposed filtering framework. These visualizations support the numerical result of the low estimation error, \(\mathbf{e} = 8.9978\times10^{-11}\), demonstrating the high fidelity of the reconstructed signal.

In addition to the above experiment, we also tested the method for a larger time horizon. 
For $T = 250$ with sampling step $\Delta t = 0.03$, the resulting quaternion data matrix $\mathbf{C}$ has size $4167 \times 4167$. 
The proposed method yields a relative reconstruction error of 
$3.6663 \times 10^{-7}$, demonstrating that the algorithm maintains high accuracy even for substantially larger systems.

\section{Conclusions} \label{sec5}
 This study investigated the theoretical and computational aspects of generalized inverses for quaternion matrices, with a particular emphasis on outer inverses and $\{1,2\}$-inverses with a prescribed range and/or null space constraints. We provided a detailed characterization of the left and right ranges and null spaces of quaternion matrices, addressing the challenges arising from the non-commutative nature of quaternions. Explicit representations for these generalized inverses were derived, along with full rank decomposition-based representations.  Furthermore, a unified theoretical framework was established, showing that outer inverses with prescribed subspace constraints can recover several classical generalized inverses such as the Moore Penrose inverse, the group inverse, and the Drazin inverse as special cases.

To bridge the gap between theory and practice, we proposed two efficient algorithms for computing these inverses: a direct algorithm using QTFM and a complex structure preserving algorithm. These algorithms were validated using extensive numerical experiments that demonstrated their accuracy and efficiency. The practical utility of the proposed methods was demonstrated through applications in color image deblurring and three-dimensional signal filtering, highlighting their effectiveness in solving real-world problems.

Future work could focus on developing iterative methods for efficiently computing quaternion generalized inverses, particularly for large-scale problems. Furthermore, exploring their applications in other areas of signal processing, machine learning, and beyond remains a promising direction for further research. This study not only advances the theoretical understanding of quaternion generalized inverses but also provides practical tools for their computation and application, paving the way for future innovations in this area.

\section*{Declarations}
\subsection*{Ethical approval}
Not applicable
\subsection*{Availability of supporting data}
The authors confirm that the data supporting the findings of this study are available within the article. 
\subsection*{Competing interests}
The author declares no competing interests.
\subsection*{Funding}
Ratikanta Behera is grateful for the support of the Anusandhan National Research Foundation (ANRF), Govt. of India, under Grant No. EEQ/2022/001065. 
 
\section*{ORCID}
Ratikanta Behera\orcidA \href{https://orcid.org/0000-0002-6237-5700}{ \hspace{2mm}\textcolor{lightblue}{ https://orcid.org/0000-0002-6237-5700}}\\
Neha Bhadala \orcidC \href{https://orcid.org/0009-0001-9249-0611}{ \hspace{2mm}\textcolor{lightblue}{https://orcid.org/0009-0001-9249-0611}} \\

\bibliographystyle{abbrv}
\bibliography{References}

\end{document}